\newcommand{\C}{\mathbb {C}}
\newcommand{\balpha}{\boldsymbol{\alpha}}
\newcommand{\bgamma}{\boldsymbol{\gamma}}
\newcommand{\sbm}[1]{\left[\begin{smallmatrix} #1
                \end{smallmatrix}\right]}
\newcommand{\bp}{\boldsymbol{\rho}}
\newcommand{\bH}{\mathbb{H}}
\newcommand{\bv}{\bm{v}}
\newcommand{\bu}{\bm{u}}
\numberwithin{equation}{section}
\newcommand{\bbm}{\boldsymbol{\mathfrak e_{ts}}}
\newcommand{\mQ}{\boldsymbol{\mathcal Q}}
\newcommand{\bmu}{\bm{\mu}}
\newcommand{\bbl}{\boldsymbol{\ell}}
\newcommand{\bbr}{\boldsymbol{r}}
\newcommand{\bl}{\boldsymbol{\mathfrak e_\ell}}
\newcommand{\br}{\boldsymbol{\mathfrak e_r}}
\newtheorem{Pa}{Paper}[section]
\newtheorem{theorem}[Pa]{{\bf Theorem}}
\newtheorem{lemma}[Pa]{{\bf Lemma}}
\newtheorem{definition}[Pa]{{\bf Definition}}
\newtheorem{corollary}[Pa]{{\bf Corollary}}
\newtheorem{remark}[Pa]{{\bf Remark}}
\newtheorem{proposition}[Pa]{{\bf Proposition}}
\newtheorem{example}[Pa]{{\bf Example}}
\begin{document}

\title[Cyclic matrices and polynomials]
{Cyclic matrices and polynomial interpolation over division rings}
\author
{Vladimir Bolotnikov}
\address{Department of Mathematics, William and Mary, 
Williamsburg, VA 23187-8795, USA}

\begin{abstract}
As is well known, any complex cyclic matrix $A$ is similar to the unique companion matrix associated with the 
minimal polynomial of $A$. On the other hand, a cyclic matrix over a division ring $\mathbb F$ is similar to 
a companion matrix of a polynomial which is defined up to polynomial similarity. In this paper we study
more rigid canonical forms by embedding a given cyclic matrix over a division ring $\mathbb F$ into a controllable or an 
observable pair. Using the characterization of ideals in $\mathbb F[z]$ in terms of controllable and observable pairs we 
consider ideal interpolation schemes in $\mathbb F[z]$ which merge into a polynomial interpolation problems containing
both left and right interpolation conditions. 
  \end{abstract}

\maketitle

\section{Introduction}
\setcounter{equation}{0}
Given a complex matrix  $A\in\C^{n\times n}$ and a vector $\bv\in\C^n$, the sets
$$
\mathbb I_{A}:=\{p\in\C[z]: \; p(A)=0\}\quad\mbox{and}\quad  \mathbb I_{A,\bv}:=\{p\in\C[z]: \; p(A)\bv=0\}
$$
are ideals in the ring $\C[z]$ of complex polynomials; their respective (monic) generators $\bmu_A$ and 
$\mathfrak P_{A,{\bf v}}$ are called the {\em minimal polynomial} of the matrix $A$ and the {\em minimal polynomial 
of the pair} $(A,\bv)$, respectively. As $\mathbb I_{A}\subseteq \mathbb I_{A,\bv}$, it follows that $\mathfrak P_{A,{\bf v}}$
divides $\bmu_A$.

\smallskip 

A matrix $A\in\C^{n\times n}$ is called {\em cyclic} if there exists a (cyclic) vector 
$\bv\in\C^{n}$ such that ${\rm span}\{\bv,A\bv,\ldots, A^{n-1}\bv\}=\C^n$, i.e., the {\em controllability matrix}
$$
\mathfrak C_{A,\bv}=\begin{bmatrix}\bv & A\bv &\ldots & A^{n-1}\bv\end{bmatrix}
$$
is invertible (equivalently, $\deg (\mathfrak P_{A,{\bf v}})=n$). In this case, we say that the pair $(A,\bv)$ is 
{\em controllable}. 
Cyclic matrices and controllable pairs can be characterized in interpolation terms as follows.
\begin{proposition}
{\rm (1)} The matrix $A\in\C^{n\times n}$ is cyclic if and only if for any $B\in\C^{n\times n}$ commuting with $A$, 
there is an $f\in\C[z]$ such that $f(A)=B$.

\smallskip

{\rm (2)} The pair $(A,\bv)$ with $A\in\C^{n\times n}$ is controllable if and only if for   any ${\bf b}\in\C^n$, there is an
$f\in\C[z]$ such that $f(A)\bv={\bf b}$.
\label{R:1.0}
\end{proposition}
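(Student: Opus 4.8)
The plan is to prove part (2) first, since it is almost immediate, and then to derive the ``only if'' direction of part (1) from it, treating the ``if'' direction of (1) separately.

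For (2), the key observation is that
$$
\{f(A)\bv:\ f\in\C[z]\}={\rm span}\{\bv,A\bv,A^2\bv,\ldots\}={\rm Ran}\,\mathfrak C_{A,\bv},
$$
where the last equality uses the Cayley--Hamilton theorem: since the characteristic polynomial of $A$ has degree $n$, every power $A^k\bv$ with $k\ge n$ already lies in ${\rm span}\{\bv,A\bv,\ldots,A^{n-1}\bv\}$, the column space of $\mathfrak C_{A,\bv}$. If the left-hand set is all of $\C^n$, then $\mathfrak C_{A,\bv}$ has rank $n$, hence is invertible, so $(A,\bv)$ is controllable. Conversely, if $(A,\bv)$ is controllable, the columns of $\mathfrak C_{A,\bv}$ span $\C^n$, so any ${\bf b}\in\C^n$ is of the form ${\bf b}=\sum_{j=0}^{n-1}c_jA^j\bv=f(A)\bv$ with $f(z)=\sum_{j=0}^{n-1}c_jz^j$.

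For the ``only if'' part of (1): let $A$ be cyclic with cyclic vector $\bv$, and let $B$ commute with $A$. By part (2) applied to the controllable pair $(A,\bv)$ there is an $f\in\C[z]$ with $f(A)\bv=B\bv$. Then for every $j\ge0$, using $f(A)A^j=A^jf(A)$ and $BA^j=A^jB$, I get $f(A)A^j\bv=A^jf(A)\bv=A^jB\bv=BA^j\bv$; since $\bv,A\bv,\ldots,A^{n-1}\bv$ form a basis of $\C^n$, the matrices $f(A)$ and $B$ agree on a basis, hence $f(A)=B$.

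For the ``if'' part of (1) I would argue by contraposition: assuming $A$ is \emph{not} cyclic, I must produce a matrix commuting with $A$ that is not a polynomial in $A$. The cleanest route is a dimension count. The commutative algebra $\C[A]=\{f(A):f\in\C[z]\}$ has dimension $\deg\bmu_A$, and for a non-cyclic $A$ one has $\deg\bmu_A<n$, whereas the commutant $\{B\in\C^{n\times n}:AB=BA\}$ always has dimension at least $n$: from the invariant-factor decomposition $\C^n\cong\bigoplus_{i=1}^k\C[z]/(d_i)$ with non-constant $d_k\mid\cdots\mid d_1=\bmu_A$, the commutant has dimension $\sum_{i}(2i-1)\deg d_i\ge\sum_i\deg d_i=n$, with equality exactly when $k=1$, i.e. when $A$ is cyclic. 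Hence $\C[A]$ is a proper subspace of the commutant, and any $B$ in the difference works. Alternatively, one may exhibit such a $B$ explicitly as the $\C[z]$-module projection of $\bigoplus_i\C[z]/(d_i)$ onto the first summand: it commutes with $A$, but if $B=f(A)$ then $f\equiv1\ (\mathrm{mod}\ d_1)$ while $f\equiv0\ (\mathrm{mod}\ d_2)$, which is impossible since $d_2\mid d_1$ is non-constant. I expect this converse direction of (1) to be the main obstacle, as it is the only place requiring structural information about $A$ beyond the definitions, namely the rational canonical form (or, equivalently, the formula for the dimension of the commutant).
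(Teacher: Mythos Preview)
Your argument is correct. Part (2) matches the paper's own treatment (given in the noncommutative generality of Theorem~\ref{R:1.0a}): the map $f\mapsto f(A)\bv$ hits all of $\C^n$ exactly when $\mathfrak C_{A,\bv}$ is invertible.

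For part (1) your route differs from the paper's sketch. The paper (in the discussion at the end of Section~5) handles the ``only if'' direction by passing to the Jordan form of $A$, reading off the upper-triangular block-Toeplitz structure forced on any $B$ commuting with $A$, and then producing $f$ by Hermite--Lagrange interpolation at the eigenvalues with multiplicities. Your approach instead bootstraps directly from part (2): solve $f(A)\bv=B\bv$ and propagate along the cyclic basis. This is shorter, avoids Jordan form entirely, and makes the role of controllability transparent. The paper does not really argue the ``if'' direction of (1); your dimension count $\dim C(A)=\sum_i(2i-1)\deg d_i\ge n>\deg\bmu_A=\dim\C[A]$ (or the explicit module projection) fills that gap cleanly. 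The trade-off is that the paper's Jordan-form route is more concrete about \emph{which} polynomial $f$ works, while yours is existence-only but needs no spectral data.
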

The first objective of this paper is to study possible extensions of these results as well as of some other characterizations of 
complex cyclic matrices recalled in Proposition \ref{P:1.2} below to the non-commutative
setting of a division ring $\mathbb F$. To fix notation, we let ${\bf e}_{j}$ to denote the $j$-th column of the $n\times n$ identity matrix
${\bf I}_n=\begin{bmatrix} {\bf e}_{1} & \ldots & {\bf e}_{n}\end{bmatrix}$ (occasionally writing
${\bf e}_{j,n}$ if the dimension is not clear from the context). We will use notation
\begin{equation}
\mathfrak A_n(z):=\begin{bmatrix}1 & z & \cdots & z^{n-1}\end{bmatrix}
\label{fraka}
\end{equation}
to identify a polynomial $g(z)=\mathfrak A_{n}(z){\bf g}$ with the column ${\bf g}$ of its coefficients.
In terms of this notation, we recall the {\em companion matrix} $C(f)$ of a monic polynomial $f\in\C[z]$:
\begin{equation}
C(f)=\begin{bmatrix} {\bf e}_2 & {\bf e}_3 & \ldots & {\bf e}_{n} & -{\bf f}\end{bmatrix} \; \; \mbox{if} \; \;
f(z)=z^n+\mathfrak A_n(z){\bf f}.
\label{compcomp}
\end{equation}
\begin{proposition}
Given a matrix $A\in\C^{n\times n}$, the following are equivalent:
\begin{enumerate}
\item $A$ is cyclic.
\item $\deg {\bmu}_A=n$, where $\bmu_A$ is the minimal polynomial of $A$.
\item $A$ is similar to a (unique) companion matrix (which is $C(\bmu_A)$).
\item $A$ is similar to a two-diagonal matrix 
\begin{equation}
\Gamma=\left[\begin{array}{cccc} \gamma_1&0&\ldots&0\\ 1 &\gamma_2&\ddots &\vdots \\
&\ddots&\ddots&0\\ 0& &1&\gamma_n\end{array}\right]
\label{1.1}
\end{equation}
with diagonal entries equal to zeros (roots) of the polynomial $\bmu_A$.
\item The Jordan form of $A$ contains only one Jordan block corresponding to each eigenvalue
(which is a zero of $\bmu_A$).
\end{enumerate}
If this is the case, then $\mathfrak P_{A,{\bf v}}(z)=\bmu_A(z)=
\det (z{\bf I}_n-A)$ for any cyclic vector $\bv$ of $A$.
\label{P:1.2}
\end{proposition}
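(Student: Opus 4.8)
The plan is to establish the chain of equivalences in a cycle, exploiting the fact that each reformulation is a standard linear-algebra fact whose proof is already implicit in the definitions recalled above. I would organize the argument as $(1)\Leftrightarrow(2)$, then $(2)\Rightarrow(3)\Rightarrow(1)$, then $(3)\Leftrightarrow(4)$ and $(3)\Leftrightarrow(5)$, finishing with the formula for $\mathfrak P_{A,\bv}$.

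First, for $(1)\Leftrightarrow(2)$: if $\bv$ is a cyclic vector then $\mathfrak C_{A,\bv}$ is invertible, so the vectors $\bv,A\bv,\dots,A^{n-1}\bv$ are linearly independent, which is exactly the statement that no nonzero polynomial of degree $<n$ annihilates the pair $(A,\bv)$; hence $\deg\mathfrak P_{A,\bv}=n$, and since $\mathfrak P_{A,\bv}$ divides $\bmu_A$ while $\bmu_A$ divides the degree-$n$ characteristic polynomial by Cayley--Hamilton, we get $\deg\bmu_A=n$. Conversely, if $\deg\bmu_A=n$, I would produce a cyclic vector: choose $\bv$ so that $\mathfrak P_{A,\bv}=\bmu_A$ (such a $\bv$ exists because the minimal polynomial is the least common multiple of the $\mathfrak P_{A,\bv}$ over all $\bv$, and over a field this lcm is attained at a single vector — the standard ``cyclic decomposition'' fact). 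Then $\deg\mathfrak P_{A,\bv}=n$ forces $\mathfrak C_{A,\bv}$ invertible.

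Next, $(2)\Rightarrow(3)$: given a cyclic vector $\bv$, the matrix $S=\mathfrak C_{A,\bv}$ is invertible and a direct computation shows $S^{-1}AS=C(\bmu_A)$, using that $A\cdot A^{k}\bv=A^{k+1}\bv$ for $k<n-1$ and $A^{n}\bv=-\mathfrak A_n(A)\bv\cdot$ (the coefficient vector of $\bmu_A$) from $\bmu_A(A)\bv=0$. Uniqueness of the companion matrix follows since a companion matrix $C(f)$ has $f$ as its characteristic (hence minimal) polynomial, so $C(f)$ similar to $A$ forces $f=\bmu_A$. For $(3)\Rightarrow(1)$, the last column $\bv={\bf e}_1$ pushed through $C(\bmu_A)$ — more precisely ${\bf e}_1$ itself — is visibly cyclic for $C(\bmu_A)$ (its controllability matrix is upper triangular with unit diagonal), and cyclicity is preserved under similarity. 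For $(3)\Leftrightarrow(4)$: a two-diagonal matrix $\Gamma$ as in \eqref{1.1} is itself cyclic with cyclic vector ${\bf e}_1$ (again the controllability matrix is triangular with nonzero diagonal), so $(4)\Rightarrow(1)\Rightarrow(3)$; conversely one checks $\det(z{\bf I}_n-\Gamma)=\prod(z-\gamma_i)$, so choosing the $\gamma_i$ to be the roots of $\bmu_A$ with multiplicity makes $\Gamma$ cyclic with the same characteristic polynomial as $C(\bmu_A)$, hence similar to it. Finally $(3)\Leftrightarrow(5)$ is the translation into Jordan form: the rational canonical form has a single invariant factor iff $\deg\bmu_A=n$, which in the algebraically closed setting means each eigenvalue contributes exactly one Jordan block (the block sizes for an eigenvalue $\lambda$ being the exponents of $(z-\lambda)$ in the invariant factors). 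The closing formula is immediate: under $(1)$, $\deg\mathfrak P_{A,\bv}=n=\deg\bmu_A$, both are monic, and $\mathfrak P_{A,\bv}\mid\bmu_A$, so they coincide, and both equal $\det(z{\bf I}_n-A)$ by Cayley--Hamilton together with the degree count.

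The only genuinely substantive input is the existence, when $\deg\bmu_A=n$, of a single vector $\bv$ with $\mathfrak P_{A,\bv}=\bmu_A$; everything else is bookkeeping with companion matrices and triangular controllability matrices. I expect this step to be the main obstacle, since it relies on the structure theory of $\C[z]$-modules (or an explicit pigeonhole/primary-decomposition argument producing such a $\bv$), and in the later non-commutative sections this is precisely the point that must be handled with care. In the complex case, however, it is classical and I would simply cite it.
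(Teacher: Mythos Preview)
Your argument is correct and follows the standard route, but there is nothing to compare it against: the paper does not prove Proposition~\ref{P:1.2}. It is stated in the introduction as a recollection of classical facts about complex cyclic matrices, serving purely as motivation for the non-commutative extensions developed later (Theorems~\ref{P:ex} and~\ref{T:4.5a}, Propositions~\ref{C:3.76} and~\ref{T:1.16}). No proof environment follows the statement, and the paper immediately moves on to discuss which parts survive over a division ring.

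Your write-up is fine as a self-contained justification. Two minor cosmetic remarks: for the companion matrix $C(\bmu_A)$ as defined in~\eqref{compcomp}, the controllability matrix $\mathfrak C_{C(\bmu_A),{\bf e}_1}$ is in fact the identity (since $C(\bmu_A)^{k}{\bf e}_1={\bf e}_{k+1}$), not merely upper triangular; and for the two-diagonal $\Gamma$ in~\eqref{1.1}, $\mathfrak C_{\Gamma,{\bf e}_1}$ is lower triangular with unit diagonal. Neither affects the validity of your argument. The one substantive ingredient you correctly flag --- the existence of a vector $\bv$ with $\mathfrak P_{A,\bv}=\bmu_A$ when $\deg\bmu_A=n$ --- is indeed the classical cyclic-vector lemma, and citing it is appropriate here.
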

In the division-ring setting, it is still true that the matrices \eqref{compcomp} and \eqref{1.1} are 
cyclic. The rest requires certain adjustments. First, the minimal polynomial $\bmu_{A}$ 
(more precisely, left and right minimal polynomials; see \eqref{2.3}) is  not similarity invariant; 
besides, simple examples show that its degree can be different from the dimension of $A$. 
The polynomial $\mathfrak P_{A,{\bf v}}$ seems to be more suitable as it is invariant under 
similarity of controllable pairs (see Definition \ref{D:1.1} below) and 
the equality $\deg (\mathfrak P_{A,{\bf v}})=n$ is equivalent to ${\bf v}$ be a (right) cyclic vector for $A$.
Although different cyclic vectors ${\bf v},{\bf v}'$ of $A$ may lead to different minimal polynomials $\mathfrak P_{A,{\bf v}}$ and
$\mathfrak P_{A,{\bf v}'}$, the latter polynomials are {\em similar}: $\mathfrak P_{A,{\bf v}}\approx\mathfrak P_{A,{\bf v}'}$ (see Section 4.1
for the precise definition). Thus, in a context that does not distinguish similar polynomials, we may write $\mathfrak P_{A}$ rather than $\mathfrak P_{A,{\bf v}}$.
Then the well-known fact that two cyclic complex matrices are similar if and only if their minimal polynomials are equal, extends 
to the non-commutative setting as follows: 
{\em cyclic matrices $A,A'\in\mathbb F^{n\times n}$ are similar if and only if $\mathfrak P_{A}\approx\mathfrak P_{A'}$}.

\smallskip

However, there is a more rigid extension in terms of controllable pairs: 
{\em two controllable pairs $(A,{\bf v})$ and $(A',{\bf v}')$  are similar if and only if
$\mathfrak P_{A,{\bf v}}=\mathfrak P_{A',{\bf v}'}$} (see Theorem \ref{T:4.5a} below).

\smallskip

In a similar manner, part (3) in Proposition \ref{P:1.2} extends to $\mathbb F$-setting in two ways: 
\begin{enumerate}
\item A cyclic matrix $A\in\mathbb F^{n\times n}$ is similar to the companion matrix $C(\mathfrak P_{A})$. 
\item For a fixed cyclic 
vector ${\bf v}$ of $A$, the controllable pair $(A,{\bf v})$ is similar to a {\em unique} pair of the form $(C(f),{\bf e}_1)$ 
(with $f=\mathfrak P_{A,{\bf v}}$). 
\end{enumerate}

A related result (Theorem \ref{T:2.10u}) describes the similarity class of a given polynomial
$p\in\mathbb F[z]$ in terms of cyclic vectors of the companion matrix $C(p)$. Similarity of a cyclic matrix to two-diagonal or 
block-diagonal matrices is discussed in Section 4.2.

\smallskip

Our second goal is to study interpolation problems of Hermite-Lagrange type in $\mathbb F[z]$. In contrast to the commutative case,
polynomials over a division ring can be evaluated on the left and on the right. Simple examples are given by left and right 
Lagrange interpolation problems where one is seeking an $f\in\mathbb F[z]$ with prescribed left (right) values at given points. 
Solution sets for homogeneous problems are right (left) ideals in $F[z]$, and under mild the assumption that the set of interpolation 
nodes is $P$-independent (see Definition \ref{D:feb1}) the problems have unique low-degree solutions. In Section 5 we consider 
more general interpolation problems with interpolation conditions given in terms of left and right tangential evaluation calculi induced
by respectively, controllable and observable pairs. In Section 6 we consider the combined (two-sided) problem containing both left and right 
interpolation conditions. The solution set of the homogeneous version of this problem is the intersection of a left and a right ideal in 
$\mathbb F[z]$, while the nonhomogeneous problem may have no solutions as well as multiple low-degree solutions. Two-sided Lagrange
problem is presented in Section 6 as an illustrative particular case.

\section{Preliminaries}
Given a division ring $\mathbb F$, let $\mathbb F[z]$ denote the ring of polynomials
in one formal variable $z$ which commutes with coefficients from $\mathbb F$. 
Since the division algorithm holds in $\mathbb F[z]$ on
either side, any ideal (left or right) in $\mathbb F[z]$ is principal. We will write $\langle f\rangle_{\bf r}$
and $\langle f\rangle_{\boldsymbol\ell}$ for the right and the left ideal generated by $f\in\mathbb F[z]$
dropping the subscript if the ideal is two-sided (i.e., left and right simultaneously). 
The intersection of two left (right) ideals is a left (right) 
ideal; the {\em least right} and {\em left common multiples} ${\bf lrcm}(f,g)$ and ${\bf llcm}(f,g)$
of two monic polynomials $f,g\in\mathbb F[z]$ are defined as generators of the respective ideals
\begin{equation}
\langle f\rangle_{\bf r}\cap\langle g\rangle_{\bf r}=
\langle{\bf lrcm}(f,g)\rangle_{\bf r}\quad\mbox{and}\quad
\langle f\rangle_{\boldsymbol\ell}\cap\langle g\rangle_{\boldsymbol\ell}=
\langle{\bf llcm}(f,g)\rangle_{\boldsymbol\ell}.
\label{2.1}
\end{equation}
If we let $Z_{\mathbb F}$ to denote the center of $\mathbb F$, then
$Z_{\mathbb F[z]}=Z_{\mathbb F}[z]$, and consequently, any ideal generated by an element of $Z_{\mathbb F}[z]$ is two-sided.
The converse is also true: the generator of any two-sided ideal of $\mathbb F[z]$ is in $Z_{\mathbb F}[z]$. Indeed, 
the left and the right (monic) generators of the ideal must be multiples of each other and therefore, they coincide.
On the other hand if $p$ is a left and right generator, then it commutes with each $\alpha\in\mathbb F$ which implies that 
its coefficients are in $Z_{\mathbb F}$; see e.g., \cite[Proposition 2.2.2]{cohn3} for details.

\subsection{Minimal polynomials} 
Any polynomial $f\in\mathbb F[z]$ can be evaluated at a matrix $A\in\mathbb F^{n\times n}$
on the left and on the right (by interpreting $\mathbb F^{n\times n}$ as an $\mathbb F$-bimodule)
as follows:
\begin{equation}
f^{\bl}(A)=\sum A^jf_j\quad\mbox{and}\quad f^{\br}(A)=\sum f_j A^j, \quad\mbox{if}\quad
f(z)=\sum f_jz^j.
\label{2.2}
\end{equation}
Since for any $f,g\in\mathbb F[z]$ and $A\in\mathbb F^{n\times n}$,
\begin{equation}
(gf)^{\bl}(A)=\sum A^j g^{\bl}(A)f_j\quad\mbox{and}\quad
(fg)^{\br}(A)=\sum f_j g^{\br}(A)A^j,
\label{2.2a}
\end{equation}
it follows that $(gf)^{\bl}(A)=0$ and $(fg)^{\br}(A)=0$ whenever 
$g^{\bl}(A)=0$ and $g^{\br}(A)=0$, respectively. Hence, the sets
\begin{equation}
\begin{array}{ll}
\mathbb I_{A,{\bf r}}&:=\{p\in\mathbb F[z]: \; p^{\bl}(A)=0\}=\langle \bmu_{A,\boldsymbol\ell}\rangle_{\bf r},\vspace{2mm}\\
\mathbb I_{A,\boldsymbol\ell}&:=\{p\in\mathbb F[z]: \; p^{\br}(A)=0\}=\langle \bmu_{A,{\bf r}}\rangle_{\boldsymbol\ell}\end{array}
\label{2.3}
\end{equation}
are respectively, a right and a left ideal in $\mathbb F[z]$; their generators $\bmu_{A,\boldsymbol\ell}$ and
$\bmu_{A,{\bf r}}$ will be called the {\em left} and the {\em right minimal polynomial} of $A$, respectively.

\smallskip

If $\mathbb I_{A,{\bf r}}$ contains a non-zero polynomial $p\in Z_{\mathbb F}[z]$ such that $p(A)=0$, then 
the set of all such polynomials form the maximal two-sided ideal contained in $\mathbb I_{A,{\bf r}}\cap \mathbb I_{A,\boldsymbol\ell}$
and its generator $\bmu_A\in Z_{\mathbb F}[z]$ is called the {\em minimal (central) polynomial of $A$}.
\begin{remark}
{\em If $A=\alpha\in\mathbb F$, then evaluations \eqref{2.2} amount to left and right ``point" evaluations of $f$ at $\alpha$:
\begin{equation}
f^{\bl}(\alpha)=\sum \alpha^jf_j\quad\mbox{and}\quad f^{\br}(\alpha)=\sum f_j \alpha^j.
\label{2.4}
\end{equation}
An element $\alpha\in\mathbb F$ is called a {\em left} or {\em right} zero of a polynomial $f\in\mathbb F[z]$ if 
$f^{\bl}(\alpha)=0$ or $f^{\br}(\alpha)=0$, respectively. The ideals $\mathbb I_{\alpha,{\bf r}}$ and $\mathbb I_{\alpha,\boldsymbol\ell}$ \eqref{2.3} 
of polynomials vanishing at $\alpha$ on the left and on the right respectively, are 
generated by the linear monic polynomial $\bmu_{\alpha,\boldsymbol\ell}=\bmu_{\alpha,{\bf r}}=\bp_\alpha$, where
\begin{equation}
\bp_\alpha(z):=z-\alpha\quad (\alpha\in\mathbb F).
\label{2.5}
\end{equation}
In other words, 
$$
f^{\bl}(\alpha)=0 \; \Leftrightarrow \; f\in \langle \bp_\alpha\rangle_{\bf r}
\quad\mbox{and}\quad f^{\br}(\alpha)=0 \; \Leftrightarrow \; 
f\in \langle \bp_\alpha\rangle_{\boldsymbol\ell}.
$$
The existence of the minimal central polynomial $\bmu_{\alpha}\in Z_{\mathbb F}[z]$ is the definition of $\alpha$ being algebraic over 
$Z_{\mathbb F}$.} 
\label{R:break}
\end{remark}
Given a polynomial $f\in\mathbb F[z]$ and given a matrix $A\in\mathbb F^{n\times n}$ and vectors 
${\bf v}\in\mathbb F^{n\times 1}$ and ${\bf u}\in\mathbb F^{1\times n}$, 
one can apply evaluations \eqref{2.2} to polynomials ${\bf v}f\in\mathbb F[z]^{n\times 1}$ and $f\bu\in\mathbb F[z]^{1\times n}$ 
as follows:
\begin{equation}
({\bf v}f)^{\bl}(A)=\sum A^j{\bf v}f_j\quad\mbox{and}\quad (f\bu)^{\br}(A)=\sum f_j \bu A^j.
\label{2.6}
\end{equation}
Due to equalities 
\begin{align}
({\bf v}gf)^{\bl}(A)&=\sum A^j \cdot ({\bf v}g)^{\bl}(A)\cdot f_j=\big(({\bf v}g)^{\bl}(A)f\big)^{\bl}(A),\label{2.6ups}\\
(fg\bu )^{\br}(A)&=\sum f_j \cdot(g\bu)^{\br}(A)\cdot A^j=\big(f(g\bu)^{\br}(A)\big)^{\br}(A),\notag 
\end{align}
holding for all $f,g\in\mathbb F[z]$, the sets 
\begin{equation}
\begin{array}{ll}
\mathbb I_{A,{\bf v}}&:=\{p\in\mathbb F[z]: \; ({\bf v}p)^{\bl}(A)=0\}=\langle \mathfrak P_{A,{\bf v}}\rangle_{\bf r},\vspace{2mm}\\
\mathbb I_{\bu,A}&:=\{p\in\mathbb F[z]: \; (p\bu)^{\br}(A)=0\}=\langle \mathfrak P_{\bu,A}\rangle_{\boldsymbol\ell}
\end{array}
\label{2.7}
\end{equation}    
are respectively a right and a left ideal in $\mathbb F[z]$; their generators are called the minimal polynomials of the {\em input pair} 
$(A,\bf v)$ and of the {\em output pair} $(\bu,A)$, respectively. 

\smallskip

Straightforward calculations show that for any 
$A\in\mathbb F^{n\times n}$, $\bv\in\mathbb F^{n\times 1}$, $\bu\in\mathbb F^{1\times n}$, and $f\in\mathbb F[z]$,
\begin{equation}
\begin{array}{ll}
\bv f(z)&=(z{\bf I}_{n}-A)\cdot (L_A(\bv f))(z)+({\bf v}f)^{\bl}(A), \\[2mm]
f(z)\bu&=(R_A(f\bu))(z)\cdot (z{\bf I}_{n}-A)+(f\bu)^{\br}(A),\end{array}\label{2.8}
\end{equation}
where $({\bf v}f)^{\bl}(A)$ and $(f\bu)^{\br}(A)$ are defined as in \eqref{2.6}
and where $L_A (\bv f)$ and $R_A (f\bu)$ are vector polynomials given by
\begin{equation}
L_A(\bv f)=\sum_{j+k=0}^{\deg f-1}
A^j \bv f_{k+j+1}z^k,\quad
R_A (f\bu)=\sum_{j+k=0}^{\deg f-1}f_{k+j+1}\bu A^jz^k.
\label{2.9}
\end{equation}
\begin{remark}
{\rm Representations \eqref{2.8} are unique in the following sense: if
$$
\bv f(z)=(z{\bf I}_{n}-A)G(z)+{\bf b}\quad\mbox{for some}\quad G\in\mathbb F^{n\times 1}[z],\quad{\bf b}\in\mathbb F^{\times 1}n, 
$$
then necessarily (by comparing the coefficients in the polynomial identity above), $G=L_A(\bv f)\quad\mbox{and}\quad {\bf b}=(\bv f)^{\bl}(A)$. 
The second representation in \eqref{2.8} is unique in a similar sense.}
\label{R:0}
\end{remark}
Ideals \eqref{2.7} can be characterized in terms of evaluations \eqref{2.6ups} as follows.
\begin{proposition}
Given $A\in\mathbb F^{n\times n}$, $\bv\in\mathbb F^{n\times 1}$, $\bu\in\mathbb F^{1\times n}$ and $f\in\mathbb F[z]$, 
\begin{enumerate}
\item $({\bf v}f)^{\bl}(A)=0$ if and only if $\bv f=(z{\bf I}_{n}-A)\cdot G$ for some $G\in\mathbb F^{n\times 1}[z]$.
\item $(f\bu)^{\br}(A)=0$ if and only if $f\bu =H\cdot (z{\bf I}_{n}-A)$ for some $H\in\mathbb F^{1\times n}[z]$.
\end{enumerate}
\label{gmvec}
\end{proposition}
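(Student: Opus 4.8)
The plan is to derive both equivalences directly from the division identities \eqref{2.8} together with the uniqueness recorded in Remark \ref{R:0}, so that no new machinery is needed beyond what is already in place. I would prove part (1) in detail and then observe that part (2) follows by the mirror-image argument, using the second line of \eqref{2.8} and the second assertion of Remark \ref{R:0}, with the roles of left and right interchanged.

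First I would handle the ``only if'' implication in (1). Starting from the first identity in \eqref{2.8},
$$
\bv f(z)=(z{\bf I}_n-A)\cdot (L_A(\bv f))(z)+(\bv f)^{\bl}(A),
$$
the hypothesis $(\bv f)^{\bl}(A)=0$ immediately yields $\bv f=(z{\bf I}_n-A)\cdot G$ with $G=L_A(\bv f)\in\mathbb F^{n\times 1}[z]$, this polynomial being exhibited explicitly in \eqref{2.9}.

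For the converse, I would suppose $\bv f=(z{\bf I}_n-A)\cdot G$ for some $G\in\mathbb F^{n\times 1}[z]$ and read this as an instance of the representation considered in Remark \ref{R:0} with remainder term ${\bf b}=0$. By the uniqueness asserted there---which one verifies, if desired, by equating coefficients of like powers of $z$ in $\bv f(z)=(z{\bf I}_n-A)G(z)+{\bf b}$ and solving the resulting descending recursion, recovering $G=L_A(\bv f)$ and ${\bf b}=(\bv f)^{\bl}(A)$---the remainder term in any such representation must equal $(\bv f)^{\bl}(A)$, whence $(\bv f)^{\bl}(A)=0$.

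I do not anticipate a genuine obstacle: the proposition is essentially a repackaging of the identity \eqref{2.8}. The only point that needs a little care is bookkeeping about sides---in part (1) one must consistently use the left evaluation $(\cdot)^{\bl}(A)$ and left divisibility by $z{\bf I}_n-A$, matching the fact that $\bv$ multiplies $f$ on the left, while in part (2) these switch to the right evaluation $(\cdot)^{\br}(A)$ and right divisibility because $\bu$ multiplies $f$ on the right; over the noncommutative ring $\mathbb F$ the two sides cannot be interchanged.
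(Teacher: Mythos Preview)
Your proposal is correct and follows essentially the same route as the paper. The paper's proof also derives the ``only if'' direction directly from \eqref{2.8}; for the converse it carries out the coefficient comparison explicitly and displays the telescoping sum showing $(\bv f)^{\bl}(A)=0$, whereas you invoke the uniqueness in Remark \ref{R:0} (which is itself proved by that same coefficient comparison) to reach the conclusion in one line---a cosmetic difference only.
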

\begin{proof} 
The proof is the same as in the scalar-valued case \cite[Theorem 1]{gm}. The ``only if" parts follow from equalities 
\eqref{2.8}. Conversely, comparing the coefficients
in the polynomial identity
$$
\bv f(z)=\sum_{i=0}^k \bv f_kz^k=(z{\bf I}_{n}-A)\cdot \sum_{i=0}^{k-1}G_iz^i=(z{\bf I}_{n}-A)G(z)
$$
leads us to equalities 
$$
\bv f_0=-AG_0,\quad\bv f_k=G_{k-1} \; \; \mbox{and} \; \; \bv f_j=G_{j-1}-AG_j
$$
for $j=1,\ldots, k-1$, which in turn imply
$$
({\bf v}f)^{\bl}(A)=\sum_{j=0}^{k}A^j\bv f_{j}=-AG_0+\sum_{j=1}^{k-1}A^j(G_{j-1}-AG_j)+A^kG_{k-1}=0.
$$
The proof of part (2) is similar.
\end{proof}
\begin{definition}
{\rm Let us say that two input pairs $(A,\bv)$ and $(A^\prime,\bv^\prime)$ (two output pairs $(\bu, A)$ and $(\bu^{\prime}, A^\prime)$)
are {\em similar} if  $A^\prime=TAT^{-1}$ and $\bv^\prime=T\bv$ ($\bu^{\prime}=\bu T^{-1}$)
for some invertible matrix $T\in\mathbb F^{n\times n}$. }
\label{D:1.1}
\end{definition} 
\noindent
The next observation follows from Definition \ref{D:1.1} and formulas \eqref{2.6}, \eqref{2.7}.
\begin{remark}{\rm (1) If the  input pairs $(A,\bv)$ and $(A^\prime,\bv^\prime)$ are similar with the similarity matrix $T$,
then $(\bv^\prime f)^{\bl}(A^\prime)=T\cdot (\bv f)^{\bl}(A)$ and hence $\mathbb I_{A,{\bf v}}=\mathbb I_{A^\prime,\bv^\prime}$
 and $\mathfrak P_{A,{\bf v}}=\mathfrak P_{A^\prime,{\bf v}^\prime}$.

\smallskip

(2) If the output pairs $(\bu, A)$ and $(\bu^{\prime}, A^\prime)$ are similar,
then $(f\bu)^{\br}(A)=(f\bu^\prime)^{\br}(A^\prime)\cdot T$
and hence $\mathbb I_{\bu,A}=\mathbb I_{\bu^\prime,A^\prime}$ and $\mathfrak P_{{\bf u},A}=\mathfrak P_{{\bf u}^\prime,A^\prime}$.}
\label{R:1.2}
\end{remark}
\subsection{Explicit formulas for minimal polynomials.} To compute $\mathfrak P_{A,{\bf v}}$, we first find the 
least integer $d$ such that the vectors ${\bf v},A{\bf v},\ldots,A^d{\bf v}$ are (right) linearly dependent and then conclude from 
the relation
\begin{equation}
A^d{\bf v}+A^{d-1}{\bf v}b_{d-1}+\ldots+A{\bf v}b_1+{\bf v}b_0=0
\label{2.10}
\end{equation}
that 
$$
\mathfrak P_{A,{\bf v}}(z)=z^d+{\displaystyle \sum_{k=0}^{d-1}z^k b_{k}}.
$$
 The construction of $\mathfrak P_{\bu,A}$ is similar.
As for the minimal polynomials $\bmu_{A,\boldsymbol\ell}$ and $\bmu_{A,{\bf r}}$ in \eqref{2.3}, let us observe that
\begin{equation}
\bmu_{A,\boldsymbol\ell}={\bf lrcm}(\mathfrak P_{A,{\bf e}_k}: \, 1\le k\le n),\quad 
\bmu_{A,{\bf r}}={\bf llcm}(\mathfrak P_{{\bf e}^\top_k,A}: \, 1\le k\le n), 
\label{2.11}
\end{equation}
where ${\bf e}_1,\ldots,{\bf e}_n$ denote the columns in the identity matrix ${\mathbf I}_n$. Indeed, since both $0$ and $1$ belong to 
the center of $\mathbb F$, it follows from \eqref{2.2} and \eqref{2.6} that 
$$
({\bf e}_kf)^{\bl}(A)= f^{\bl}(A){\bf e}_k\quad\mbox{for}\quad k=1,\ldots,n,
$$
so that $\mathbb I_{A,{\bf r}}=\bigcap_{k=1}^n\mathbb I_{A,{\bf e}_k}$, by \eqref{2.3} and \eqref{2.7}. 
Writing the latter equality in terms of generators we get the first equality in \eqref{2.11}; the second equality follows similarly.  
We illustrate the above recipe by computing minimal polynomials of the two-diagonal matrix $\Gamma$ as in \eqref{1.1}.
\begin{proposition}
Let $\Gamma_{\bgamma}\in\mathbb F^{n\times n}$ be of the form
\begin{equation}
\Gamma=\Gamma_{\bgamma}=\big[\delta_{i,j}\gamma_j+\delta_{i-1,j}\big]_{i,j=1}^n,\quad \bgamma=(\gamma_1,\ldots,\gamma_n)\subset\mathbb F^n,
\label{1.1j}
\end{equation}
($\delta_{i,j}$ is the Kronecker symbol) and let ${\bf e}_k$ be the $k$-th column of ${\bf I}_n$. Then
\begin{equation}
\mathfrak P_{\Gamma,{\bf e}_{k}}=\bp_{\gamma_k}\bp_{\gamma_{k+1}}\cdots \bp_{\gamma_n}\quad\mbox{and}\qquad
\mathfrak P_{{\bf e}_{k}^\top,\Gamma}=\bp_{\gamma_1}\bp_{\gamma_{2}}\cdots \bp_{\gamma_k}
\label{1.1k}
\end{equation}
for all $k=1,\ldots,n$, and consequently, the left and right minimal polynomials of $\Gamma$ are given by
\begin{align}
\bmu_{\Gamma,{\boldsymbol\ell}}&={\bf lrcm}\big(\bp_{\gamma_1}\cdots \bp_{\gamma_n}, \; \bp_{\gamma_2}\cdots \bp_{\gamma_n},\ldots,
\bp_{\gamma_{n-1}}\bp_{\gamma_n}, \; \bp_{\gamma_n}\big),\label{1.11x}\\
\bmu_{\Gamma,{\bf r}}&={\bf llcm}\big(\bp_{\gamma_1}\cdots \bp_{\gamma_n}, \; \bp_{\gamma_1}\cdots \bp_{\gamma_{n-1}},\ldots,
\bp_{\gamma_{1}}\bp_{\gamma_2}, \; \bp_{\gamma_1}\big).\notag
\end{align}
\label{P:3.15}
\end{proposition}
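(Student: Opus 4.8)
The plan is to compute the input-pair polynomials $\mathfrak P_{\Gamma,{\bf e}_k}$ explicitly, obtain the output-pair polynomials $\mathfrak P_{{\bf e}_k^\top,\Gamma}$ by the mirror-image argument, and then read off \eqref{1.11x} by substituting \eqref{1.1k} into the general formulas \eqref{2.11}. The starting point is that, since the entries of the standard vectors are $0$ and $1$, the defining formula \eqref{1.1j} gives $\Gamma{\bf e}_j=\gamma_j{\bf e}_j+{\bf e}_{j+1}$ and ${\bf e}_j^\top\Gamma=\gamma_j{\bf e}_j^\top+{\bf e}_{j-1}^\top$ for $1\le j\le n$ (with the convention ${\bf e}_{n+1}:=0$, ${\bf e}_0^\top:=0$), whence $({\bf e}_j\bp_{\gamma_j})^{\bl}(\Gamma)=\Gamma{\bf e}_j-{\bf e}_j\gamma_j={\bf e}_{j+1}$ and, dually, $(\bp_{\gamma_j}{\bf e}_j^\top)^{\br}(\Gamma)={\bf e}_j^\top\Gamma-\gamma_j{\bf e}_j^\top={\bf e}_{j-1}^\top$.

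Set $f_k:=\bp_{\gamma_k}\bp_{\gamma_{k+1}}\cdots\bp_{\gamma_n}$, so that $f_k=\bp_{\gamma_k}f_{k+1}$ (with $f_{n+1}:=1$). First I would show $f_k\in\mathbb I_{\Gamma,{\bf e}_k}=\langle\mathfrak P_{\Gamma,{\bf e}_k}\rangle_{\bf r}$. Applying the first identity in \eqref{2.6ups} with ${\bf v}={\bf e}_k$, $g=\bp_{\gamma_k}$, $f=f_{k+1}$ and inserting $({\bf e}_k\bp_{\gamma_k})^{\bl}(\Gamma)={\bf e}_{k+1}$ produces the telescoping identity
$$
({\bf e}_kf_k)^{\bl}(\Gamma)=\big(({\bf e}_k\bp_{\gamma_k})^{\bl}(\Gamma)\,f_{k+1}\big)^{\bl}(\Gamma)=({\bf e}_{k+1}f_{k+1})^{\bl}(\Gamma)=\cdots=({\bf e}_nf_n)^{\bl}(\Gamma)=\Gamma{\bf e}_n-{\bf e}_n\gamma_n=0,
$$
so that $\mathfrak P_{\Gamma,{\bf e}_k}$ is a monic left divisor of $f_k$ and $\deg\mathfrak P_{\Gamma,{\bf e}_k}\le n-k+1$.

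Next I would establish the matching lower bound. A short induction on $m$ (using $\Gamma{\bf e}_j=\gamma_j{\bf e}_j+{\bf e}_{j+1}$) shows that, for $0\le m\le n-k$, the vector $\Gamma^m{\bf e}_k$ lies in the right span of ${\bf e}_k,\ldots,{\bf e}_{k+m}$ with the coefficient of ${\bf e}_{k+m}$ equal to $1$; hence the submatrix on rows $k,\ldots,n$ of $[{\bf e}_k,\Gamma{\bf e}_k,\ldots,\Gamma^{n-k}{\bf e}_k]$ is upper triangular with $1$'s on the diagonal, thus invertible over $\mathbb F$, so ${\bf e}_k,\Gamma{\bf e}_k,\ldots,\Gamma^{n-k}{\bf e}_k$ are right linearly independent. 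By the construction of $\mathfrak P_{\Gamma,{\bf e}_k}$ recalled around \eqref{2.10}, this gives $\deg\mathfrak P_{\Gamma,{\bf e}_k}\ge n-k+1$, and combined with the previous step $\deg\mathfrak P_{\Gamma,{\bf e}_k}=n-k+1=\deg f_k$. Since $\mathfrak P_{\Gamma,{\bf e}_k}$ is a monic left divisor of the monic polynomial $f_k$ of the same degree, $\mathfrak P_{\Gamma,{\bf e}_k}=f_k$, which is the first equality in \eqref{1.1k}. The second equality follows by the mirror-image computation: with $g_k:=\bp_{\gamma_1}\cdots\bp_{\gamma_k}=g_{k-1}\bp_{\gamma_k}$, the second identity in \eqref{2.6ups} together with $(\bp_{\gamma_j}{\bf e}_j^\top)^{\br}(\Gamma)={\bf e}_{j-1}^\top$ telescopes $(g_k{\bf e}_k^\top)^{\br}(\Gamma)$ down to $(\bp_{\gamma_1}{\bf e}_1^\top)^{\br}(\Gamma)=0$, while the analogous triangularity of ${\bf e}_k^\top,{\bf e}_k^\top\Gamma,\ldots,{\bf e}_k^\top\Gamma^{k-1}$ (with respect to columns $1,\ldots,k$) forces $\deg\mathfrak P_{{\bf e}_k^\top,\Gamma}=k$, so $\mathfrak P_{{\bf e}_k^\top,\Gamma}=g_k$. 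Finally \eqref{1.11x} is obtained by inserting \eqref{1.1k} into \eqref{2.11}.

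I do not expect a genuine obstacle here; the points that need care are organizational. One must keep the sides consistent ($\mathbb I_{\Gamma,{\bf e}_k}$ is a \emph{right} ideal, ``divides'' means on the \emph{left}, and the independence used for the degree lower bound is \emph{right} linear independence of columns in the right $\mathbb F$-module $\mathbb F^n$), and one uses twice the elementary fact that a square matrix over a division ring which is triangular with nonzero diagonal entries is invertible.
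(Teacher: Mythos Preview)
Your proof is correct and follows essentially the same approach as the paper: both use the relations $\Gamma{\bf e}_j={\bf e}_j\gamma_j+{\bf e}_{j+1}$ to telescope $({\bf e}_kf_k)^{\bl}(\Gamma)$ down to zero via \eqref{2.6ups}, establish the right linear independence of ${\bf e}_k,\Gamma{\bf e}_k,\ldots,\Gamma^{n-k}{\bf e}_k$ to pin down the degree, and then invoke \eqref{2.11} for \eqref{1.11x}. Your version spells out the triangularity argument for the independence a bit more explicitly, but the strategy is the same.
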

\begin{proof} We first observe from \eqref{1.1j} that 
\begin{equation}
\Gamma {\bf e}_j={\bf e}_j\gamma_j+{\bf e}_{j+1}\quad(j=1,\ldots,n-1)\quad\mbox{and} \quad \Gamma {\bf e}_n={\bf e}_n.
\label{1.1m}
\end{equation}
Therefore, for any fixed $k\le n$, the vectors ${\bf e}_k, \Gamma{\bf e}_k,\ldots, \Gamma^{n-k}{\bf e}_k$ are right
linearly independent. By the recipe \eqref{2.10}, it suffices to find a monic polynomial $f$ with $\deg f=n-k$ subject to
condition $({\bf e}_kf)^{\bl}(\Gamma)=0$ to claim that $\mathfrak P_{\Gamma,{\bf e}_{k}}=f$. We next show that 
$f=\bp_{\gamma_k}\bp_{\gamma_{k+1}}\cdots \bp_{\gamma_n}$ is such a polynomial. To this end, we write equalities
\eqref{1.1m} in terms of the left evaluation \eqref{2.6ups} as 
$$
({\bf e}_j\bp_{\gamma_j})^{\bl}(\Gamma)={\bf e}_{j+1}\quad(j=1,\ldots,n-1)\quad\mbox{and}\quad ({\bf e}_n\bp_{\gamma_n})^{\bl}(\Gamma)=0.
$$
Upon making use of the first formula in \eqref{2.6ups} and taking into account the latter equalities for $j=k,\ldots,n$ we get
\begin{align*}
\big({\bf e}_k \bp_{\gamma_k}\bp_{\gamma_{k+1}}\cdots \bp_{\gamma_n}\big)^{\bl}(\Gamma)&=
\big(({\bf e}_k \bp_{\gamma_k})^{\bl}(\Gamma)\bp_{\gamma_{k+1}}\cdots \bp_{\gamma_n}\big)^{\bl}(\Gamma)\\
&=\big({\bf e}_{k+1}\bp_{\gamma_{k+1}}\cdots \bp_{\gamma_n}\big)^{\bl}(\Gamma)\\
&=\ldots=\big({\bf e}_n\bp_{\gamma_n}\big)^{\bl}(\Gamma)=0,
\end{align*}
which verifies the first part in \eqref{1.1k}. The second part is verified similarly via writing relations 
$$
{\bf e}_j^\top\Gamma={\bf e}_j\gamma_j+{\bf e}_{j-1}\quad(j=2,\ldots,n)\quad\mbox{and} \quad {\bf e}_1\Gamma=\gamma_1{\bf e}_1
$$
in terms of the evaluation \eqref{2.6ups} as 
$$
(\bp_{\gamma_j}{\bf e}_j^\top)^{\br}(\Gamma)={\bf e}_{j-1}\quad(j=2,\ldots,n)\quad\mbox{and} \quad (\bp_{\gamma_1}{\bf e}_1^\top)^{\br}(\Gamma)=0
$$
and then making use of the second formula in \eqref{2.6ups}. Formulas \eqref{1.11x} follow from \eqref{1.1k} by the general principle \eqref{2.11}.
\end{proof}

\subsection{Companion matrices} For a monic polynomial $p\in\mathbb F[z]$, the associated {\em left} and 
{\em right companion matrices} are defined as
\begin{equation}
C_{\boldsymbol\ell}(p)=
\begin{bmatrix} 0 & 0 &\ldots & 0 &-p_0\\
1 & 0 & \ldots& 0 &-p_1\\
0 & 1& \ldots &0 & -p_2 \\
\vdots & \vdots &\ddots & \vdots & \vdots \\
0 & 0 &\ldots &1 & -p_{n-1} \end{bmatrix}=C_{\bf r}(p)^\top,\quad p(z)=z^n+\sum_{k=0}^{n-1} p_{k}z^{k}.
\label{2.12}
\end{equation}
By \cite[Theorem A2]{salom}, two matrices  $A,B\in\mathbb F^{n\times n}$ are  similar over a unital ring $\mathbb F$ if and only if
the pencils $z{\bf I}_n-A$ and $z{\bf I}_n-B$ are {\em equivalent} over $\mathbb F^{n\times n}[z]$ (i.e., one of them can be
transformed into another by elementary row and column operations). Combining this result with the observation that 
the pencils $z{\bf I}_n-C_{\boldsymbol\ell}(p)$ and $z{\bf I}_n-C_{\bf r}(p)$ are both equivalent over $\mathbb F^{n\times n}[z]$ to the
diagonal polynomial matrix $\sbm{{\bf I}_{n-1}& 0 \\ 0 & p(z)}$, leads to the conclusion (see \cite{salom}) that
\begin{equation}
C_{\boldsymbol\ell}(p)\sim C_{\bf r}(p)\quad\mbox{for any monic}\quad p\in \mathbb F[z].
\label{1.14g}
\end{equation}
\begin{remark}
For a monic $p\in\mathbb F[z]$ and associated companion matrices $C_{\boldsymbol\ell}(p)$ and $C_{\bf r}(p)$ (as in \eqref{2.12}),
\begin{equation}
\bmu_{C_{\boldsymbol\ell}(p),\boldsymbol\ell}=\bmu_{C_{\bf r}(p),{\bf r}}
=\mathfrak P_{C_{\boldsymbol\ell}(p),{\bf e}_1}=\mathfrak P_{{\bf e}_1^\top, C_{\bf r}(p)}=p.
\label{1.14f}
\end{equation}
{\rm Indeed, recalling ${\bf e}_j$, the $j$-th column of ${\bf I}_n$ and observing that 
\begin{equation}
C_{\boldsymbol\ell}(p)^{k-1}{\bf e}_1={\bf e}_{k}\quad (k=1,\ldots,n)\quad\mbox{and}\quad
C_{\boldsymbol\ell}(p)^{n}{\bf e}_1=-\sum_{j=1}^{n}{\bf e}_jp_j,
\label{1.14m}
\end{equation}
we see that the minimal right linearly dependent set $\{C_{\boldsymbol\ell}(p)^{j}{\bf e}_1\}_{j=1}^d$ occurs for $d=n$, and 
the relation
$$
C_{\boldsymbol\ell}(p)^{n}{\bf e}_1+\sum_{j=1}^{n}C_{\boldsymbol\ell}(p)^{n-j}{\bf e}_1p_{n-j}=0
$$
means that $\mathfrak P_{C_{\boldsymbol\ell}(p),{\bf e}_1}=p$. Multiplying the latter equality by
$C_{\boldsymbol\ell}(p)^{k}$ on the left ($k=1,\ldots, n-1$) we get, on account of \eqref{1.14m},
\begin{align*}
0=C_{\boldsymbol\ell}(p)^{n}{\bf e}_k+\sum_{j=1}^{n}C_{\boldsymbol\ell}(p)^{n-j}{\bf e}_kp_{n-j}
&=\bigg(C_{\boldsymbol\ell}(p)^{n}+\sum_{j=1}^{n}C_{\boldsymbol\ell}(p)^{n-j}p_{n-j}\bigg){\bf e}_k\\
&=p^{\bl}(C_{\boldsymbol \ell}(p)){\bf e}_k,
\end{align*}
where the second equality holds since ${\bf e}_k\in Z_{\mathbb F}^{n}$. Since  
$p^{\bl}(C_{\boldsymbol \ell}(p)){\bf e}_k=0$ for $k=1,\ldots,n$, it follows that 
$p^{\bl}(C_{\boldsymbol \ell}(p))=0$, so that $p\in\langle \bmu_{C_{\boldsymbol \ell}(p),{\boldsymbol\ell}}\rangle_{\bf r}$.
Since $p=\mathfrak P_{C_{\boldsymbol\ell}(p),{\bf e}_1}$ is a divisor of $\bmu_{C_{\boldsymbol \ell}(p),{\boldsymbol\ell}}$, it follows that
$\bmu_{C_{\boldsymbol \ell}(p),{\boldsymbol\ell}}=p$. The rest of \eqref{1.14g} is verified similarly}. 
\label{P:2.9}
\end{remark}
\begin{remark}
{\rm Although in general, $\bmu_{C_{\boldsymbol\ell}(p),{\bf r}}$ and $\bmu_{C_{\bf r}(p),{\boldsymbol\ell}}$ are not equal to $p$
(quite expectedly), they are divisible by $p$ on the right and on the left, respectively. The latter follows from the general principle 
\eqref{2.11} and equalities
$$
\mathfrak P_{{\bf e}_n^\top,C_{\boldsymbol\ell}(p)}=\mathfrak P_{C_{\bf r}(p),{\bf e}_n}=p
$$
which are verified as in the proof of Proposition \eqref{P:3.15}.}
\label{P:2.9a}
\end{remark}
We next specify formulas \eqref{2.6} and \eqref{2.9} for the case where $A$ is	
 a companion matrix (we will need them in Section 6). To this end, we recall 
the backward-shift operator $R_0$ acting on $\mathbb F[z]$ by the rule
\begin{equation}
R_0: \; \sum_{j=0}^N f_j z^j\to \sum_{j=0}^{N-1} f_{j+1} z^k.
\label{2.12u}
\end{equation}
\begin{proposition}
Let $p\in\mathbb F[z]$ and $C_{\boldsymbol\ell}(p)$ be defined as in \eqref{2.12}.
Given any $f\in\mathbb F[z]$, let us divide it by $p$ on the left: 
\begin{equation}
f(z)=p(z)g(z)+b(z), \quad g\in\mathbb F[z],\quad b(z)=b_0+b_1z+\ldots+b_{n-1}z^{n-1},
\label{2.15}
\end{equation}
and let us define the polynomials
\begin{equation}
g_j=(R_0^{j}p)\cdot g+R_0^{j}b\quad\mbox{for}\quad j=1,\ldots,n.
\label{2.12x}
\end{equation}
Then $g_n=g$ and 
\begin{equation}
({\bf e}_1 f)^{\bl}(C_{\boldsymbol \ell}(p))={\bf b}:=\left[\begin{array}{c}b_0 \\ b_1 \\ \vdots \\ b_{n-1}\end{array}\right],\quad
L_{C_{\boldsymbol \ell}(p)}({\bf e}_1f)=G:=\left[\begin{array}{c} g_1 \\ g_2 \\ \vdots \\ g\end{array}\right].\label{ups}
\end{equation}
\label{P:nul}
\end{proposition}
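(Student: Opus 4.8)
The plan is to exhibit, by direct computation, a decomposition of ${\bf e}_1 f(z)$ in the shape of the first identity in \eqref{2.8}, with the prescribed $G$ and ${\bf b}$ playing the roles of quotient and remainder, and then to invoke the uniqueness recorded in Remark~\ref{R:0}. Precisely, setting $G=[\,g_1\ \cdots\ g_n\,]^\top$ (with $g_n=g$) and ${\bf b}=[\,b_0\ \cdots\ b_{n-1}\,]^\top$ as in \eqref{ups}, I will verify the polynomial identity
\[
{\bf e}_1 f(z)=(z{\bf I}_n-C_{\boldsymbol\ell}(p))\,G(z)+{\bf b}.
\]
Since $G\in\mathbb{F}^{n\times1}[z]$ and ${\bf b}\in\mathbb{F}^{n\times1}$, Remark~\ref{R:0} then forces $G=L_{C_{\boldsymbol\ell}(p)}({\bf e}_1 f)$ and ${\bf b}=({\bf e}_1 f)^{\bl}(C_{\boldsymbol\ell}(p))$, which is exactly the assertion \eqref{ups}.

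The computational engine is the elementary identity $z\cdot R_0 q=q-q(0)$, valid for every $q\in\mathbb{F}[z]$, where $q(0)$ denotes the constant term of $q$; iterating it gives $z\cdot R_0^{\,j}q=R_0^{\,j-1}q-(R_0^{\,j-1}q)(0)$ for $j\ge1$, and the constant term of $R_0^{\,j-1}q$ is the coefficient of $z^{\,j-1}$ in $q$. Applying this with $q=p$ and $q=b$ inside the definition \eqref{2.12x} of $g_j$, and using $f=pg+b$, I obtain the recursions
\[
z g_1=(p-p_0)g+(b-b_0)=f-p_0 g-b_0,\qquad z g_j=g_{j-1}-p_{j-1}g-b_{j-1}\quad(j=2,\dots,n).
\]
The equality $g_n=g$ is immediate on its own: $p$ is monic of degree $n$ and $\deg b\le n-1$, so $R_0^{\,n}p=1$ and $R_0^{\,n}b=0$, whence $g_n=(R_0^{\,n}p)g+R_0^{\,n}b=g$.

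It then remains to expand $(z{\bf I}_n-C_{\boldsymbol\ell}(p))G+{\bf b}$ coordinate by coordinate, using the explicit form \eqref{2.12} of $C_{\boldsymbol\ell}(p)$: its first row has the single nonzero entry $-p_0$ in column $n$, while its $i$-th row ($2\le i\le n$) has $1$ in column $i-1$ and $-p_{i-1}$ in column $n$. Since the last coordinate of $G$ is $g_n=g$, the $i$-th coordinate of $(z{\bf I}_n-C_{\boldsymbol\ell}(p))G+{\bf b}$ equals $z g_1+p_0 g+b_0$ for $i=1$, and $z g_i-g_{i-1}+p_{i-1}g+b_{i-1}$ for $2\le i\le n$. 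By the two recursions above, the first of these equals $f$ and all the others vanish, so the column equals ${\bf e}_1 f(z)$, as required.

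I do not anticipate a genuine obstacle; the argument is essentially bookkeeping. The one place that deserves attention is the shift-index accounting---keeping straight that the constant term of $R_0^{\,j-1}p$ is $p_{j-1}$, and that the vanishing of the middle coordinates is produced by the telescoping recursion $z g_j=g_{j-1}-p_{j-1}g-b_{j-1}$ rather than by any multiplicativity of $R_0$ with respect to products of polynomials (which fails). Once the identity $z\cdot R_0 q=q-q(0)$ is in place, the rest follows mechanically.
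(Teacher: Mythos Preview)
Your proof is correct and follows essentially the same route as the paper's own argument: both derive the recursion $z g_{j}=g_{j-1}-p_{j-1}g-b_{j-1}$ (equivalently $g_{j-1}=z g_{j}+p_{j-1}g+b_{j-1}$) from the shift identity $z\cdot R_0 q=q-q(0)$, assemble the coordinates into the single matrix identity ${\bf e}_1 f=(z{\bf I}_n-C_{\boldsymbol\ell}(p))G+{\bf b}$, and then invoke the uniqueness in Remark~\ref{R:0}. The only cosmetic difference is that the paper writes the recursion indexed as $zg_{j+1}+p_j g+b_j=g_j$ and displays the matrix identity explicitly, whereas you argue row by row; the content is identical.
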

\begin{proof} Since $p$ is monic and $\deg p=n$, we have $R_0^{n}p=1$. Since $\deg b<n$, we have $R_0^{n}b=0$. Then for $j=n$, the formula
\eqref{2.12x} gives $g_n=g$. By definitions \eqref{2.12x}, we have
\begin{align*}
zg_{j+1}(z)+p_j g(z)+b_{j+1}&=z(R_0^{j+1}p)(z)g(z)+(R_0^{j}b(z)+p_jg(z)+b_{j}\\
&=\big[z(R_0^{j+1}p)(z)+p_j\big]g(z)+(R_0^{j}b)(z)+b_j\\
&=(R_0^{j}p)(z)g(z)g+R_0^{j}b=g_j(z)
\end{align*}
for all $j=1,\ldots,n-1$. The latter $n-1$ equalities along with \eqref{2.15} can be written in the matrix form as 
$$
\left[\begin{array}{c}f(z) \\ 0 \\ \vdots \\ 0\end{array}\right]=
\begin{bmatrix} z & 0 &\ldots & 0 &p_0\\
-1 & z & \ldots& 0 &p_1\\
\vdots & \vdots &\ddots & \vdots & \vdots \\
0 & 0 &\ldots &-1 & z+p_{n-1} \end{bmatrix}\begin{bmatrix}g_1(z)\\
g_2(z)\\ \vdots \\ g_n(z)\end{bmatrix}+
\left[\begin{array}{c}b_0 \\ b_1 \\ \vdots \\ b_{n-1}\end{array}\right].
$$
Writing this identity in the form
$\; {\bf e}_1 f(z)=(z{\bf I}_n-C_{\boldsymbol \ell}(p))G(z)+{\bf b}, \; $
we arrive at both formulas in \eqref{ups}, by Remark \ref{R:0}. 
\end{proof}
The same arguments lead us to the right-sided analogues of formulas \eqref{ups} which are recorded below for
future references.
\begin{proposition}
Let $q\in\mathbb F[z]$ $(\deg q=k$) and $C_{\bf r}(p)\in\mathbb F^{k\times k}$ be defined as in \eqref{2.12}.
Given any $f\in\mathbb F[z]$, let us divide it by $q$ on the right
$$
f(z)=h(z)q(z)+d(z),\quad h\in\mathbb F[z],\quad d(z)=d_0+d_1z+\ldots +d_{n-1}z^{k-1},
$$
and let us define the polynomials 
\begin{equation}
h_j=h\cdot (R_0^{j}q)+R_0^{j}d\quad\mbox{for}\quad j=1,\ldots,n.
\label{2.12xx}
\end{equation}
Then $h_k=h$ and 
$$
(f{\bf e}^\top_1 )^{\br}(C_{\bf r}(q))={\bf d}:=\left[ d_0 \;  \ldots \;  d_{n-1}\right],\quad
L_{C_{\bf r}(q)}(f{\bf e}_1^\top)=\left[ h_1 \; \ldots \; h\right].
$$
\label{R:nur}
\end{proposition}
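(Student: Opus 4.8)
The plan is to carry out, \emph{mutatis mutandis}, the transpose of the proof of Proposition \ref{P:nul}: right division by $q$ replaces left division by $p$, the row identity $f{\bf e}_1^\top$ replaces the column identity ${\bf e}_1f$, and the right companion matrix $C_{\bf r}(q)=C_{\boldsymbol\ell}(q)^\top$ replaces $C_{\boldsymbol\ell}(p)$. The first step is to record that $h_k=h$: since $q$ is monic of degree $k$ we have $R_0^{k}q=1$, and since $\deg d<k$ we have $R_0^{k}d=0$, so formula \eqref{2.12xx} at $j=k$ collapses to $h_k=h\cdot 1+0=h$.

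The core step is the recursion
$$h_j=h_{j+1}\,z+h\,q_j+d_j\qquad(j=1,\ldots,k-1)$$
and its iterated ``top'' consequence $f=h_1\,z+h\,q_0+d_0$. For the recursion I would use the defining property of the backward shift $R_0$, namely that $(R_0^{j+1}g)(z)\,z+(R_0^{j}g)(0)=(R_0^{j}g)(z)$ for every $g\in\mathbb F[z]$; applying it with $g=q$ (so $(R_0^{j}q)(0)=q_j$) and with $g=d$ (so $(R_0^{j}d)(0)=d_j$) and substituting the definition \eqref{2.12xx} of $h_{j+1}$,
\begin{align*}
h_{j+1}\,z+h\,q_j+d_j&=h\big((R_0^{j+1}q)(z)\,z+q_j\big)+\big((R_0^{j+1}d)(z)\,z+d_j\big)\\
&=h\,(R_0^{j}q)(z)+(R_0^{j}d)(z)=h_j.
\end{align*}
Iterating the $k-1$ relations down to $j=1$ and using $h_k=h$, that $q$ is monic of degree $k$, and $f=hq+d$ then yields $f=h_1\,z+h\,q_0+d_0$.

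It remains to assemble these $k$ scalar identities into a single matrix identity. Writing $H(z)=[\,h_1\ \cdots\ h_k\,]$ and ${\bf d}=[\,d_0\ \cdots\ d_{k-1}\,]$ and expanding $z{\bf I}_k-C_{\bf r}(q)$ via \eqref{2.12}, the columns of $H(z)\big(z{\bf I}_k-C_{\bf r}(q)\big)+{\bf d}$ work out to: column $1$ equals $h_1z+h_kq_0+d_0=f$ (by the top relation and $h_k=h$); column $j$ for $2\le j\le k-1$ equals $-h_{j-1}+h_jz+h_kq_{j-1}+d_{j-1}=0$ (by the recursion); and column $k$ equals $-h_{k-1}+h_kz+h_kq_{k-1}+d_{k-1}=0$ (by the recursion at $j=k-1$). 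Hence $f{\bf e}_1^\top=H(z)\big(z{\bf I}_k-C_{\bf r}(q)\big)+{\bf d}$. Since ${\bf d}$ is a constant row vector, this is precisely a representation of $f{\bf e}_1^\top$ of the type in the second identity of \eqref{2.8} (with the operator of \eqref{2.9}), and the uniqueness of such a representation — the row/right-sided analogue of Remark \ref{R:0} — forces $(f{\bf e}_1^\top)^{\br}(C_{\bf r}(q))={\bf d}$ and $R_{C_{\bf r}(q)}(f{\bf e}_1^\top)=H$, which are the claimed formulas (the operator written $L_{C_{\bf r}(q)}$ in the statement being $R_{C_{\bf r}(q)}$ of \eqref{2.9}). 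I expect no genuine obstacle here; the only point that needs care is the side-asymmetry relative to Proposition \ref{P:nul} — now $h$ must be kept as a left factor throughout, and the $z$'s introduced by the companion pencil must be commuted past the scalar coefficients $q_j,d_j$, which is legitimate because $z$ is central in $\mathbb F[z]$.
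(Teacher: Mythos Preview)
Your proof is correct and follows exactly the route the paper intends: it is the transposed version of the proof of Proposition~\ref{P:nul}, which is all the paper claims (``the same arguments lead us to the right-sided analogues''). Your careful handling of the side-asymmetry (keeping $h$ as a left factor and checking the column-by-column computation against $z{\bf I}_k-C_{\bf r}(q)$) is precisely what is needed, and your observation that the operator denoted $L_{C_{\bf r}(q)}$ in the statement should be $R_{C_{\bf r}(q)}$ of \eqref{2.9} is a genuine typo in the paper.
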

\section{Controllable and observable pairs}
For $A\in\mathbb F^{n\times n}$ and $\bv\in \mathbb F^{n\times 1}$, the input pair
$(A,\bv)$ is called {\em controllable}, if its {\em controllability matrix}
\begin{equation}
\mathfrak C_{A,\bv}=\begin{bmatrix}\bv & A\bv &\ldots & A^{n-1}\bv\end{bmatrix}\quad\mbox{is
invertible}.
\label{1.2x}
\end{equation}
In this case, the vector $\bv$ is called a {\em right cyclic vector} for $A$. For similar input pairs  $(A,\bv)$
and $(A^\prime,\bv^\prime)$ (as in Definition \ref{D:1.1}), we have from \eqref{1.2x}
$$
T\mathfrak C_{A,\bv}=\begin{bmatrix}\bv^\prime & TAT^{-1}\bv^\prime &\ldots & TA^{n-1}T^{-1}\bv^\prime\end{bmatrix}
=\mathfrak C_{A^\prime,\bv^\prime},
$$
from which we see that controllability is similarity-invariant.

\smallskip

The concept of controllability goes back to \cite{kalman}; the discrete time-invariant linear system
$$
x(k+1)=Ax(k)+{\bv}w(k), \quad x(0)=x_0
$$
is called controllable, if for any preassigned $x(m)\in\mathbb F^n$, there exists the input sequence $w(0),\ldots,w(m-1)\in\mathbb F$
transferring the given initial state $x(0)$ into $x(m)$. The latter turns out to be equivalent to the controllability matrix \eqref{1.2x}
be invertible.

\smallskip

The concept of observability is dual to that of controllability: given $A\in\mathbb F^{n\times n}$ and ${\bf u}\in\mathbb F^{1\times n}$,
the discrete time-invariant linear system
$$
x(k+1)=Ax(k), \quad y(k)=\bv x(k),\quad x(0)=x_0
$$
is called {\em observable} if any preassigned output sequence $y(0),\ldots,y(n-1)$ can be generated by an appropriate initial state $x(0)$.
The latter holds if and only if the observability matrix of the pair $(\bu,A)$ is invertible which we adopt as the definition
of observability:

\smallskip

Given $A\in\mathbb F^{n\times n}$ and a row vector ${\bf u}\in\mathbb F^{1\times n}$,
the output pair $(\bu,A)$ is called {\em observable} if its {\em observability matrix}
\begin{equation}
\mathfrak O_{\bu, A}=\sbm{\bu\\ \bu A \vspace{-1mm}\\ \vdots \\ \bu A^{n-1}}\quad\mbox{is
invertible}.
\label{1.2u}
\end{equation}
In this case, $\bu$ is called a {\em left cyclic vector} for $A$. For similar output pairs $(\bu,A)$
and $(\bu^\prime,A^\prime)$, we have $\mathfrak O_{\bu^\prime, A^\prime}=\mathfrak O_{\bu, A}T^{-1}$ and hence,
observability is similarity-invariant.
\begin{remark}
{\rm It follows from the definition \eqref{1.2x} that an input pair $(A,{\bv})$ with $A\in\mathbb F^{n\times n}$
is controllable if and only if the least integer $d$ for which \eqref{2.10} holds equals $d=n$, i.e., if and only if 
$\deg \mathfrak P_{A,\bv}=n$. Similarly, an output pair $({\bf u},A)$ is observable if and only if 
$\deg \mathfrak P_{{\bf u},A}=n$.}  
\label{R:3:9}
\end{remark}
Explicit formulas for minimal polynomials of controllable and observable pairs are given below.
\begin{proposition}
{\rm (1)} If the pair $(A,\bv)$ (with $\bv\in\mathbb F^n$) is controllable, then its minimal polynomial is given by the formula
\begin{equation}
\mathfrak P_{A,\bv}(z)=z^n+\sum_{k=0}^{n-1}z^kb_k=z^n-\mathfrak A_{n}(z)
\mathfrak C_{A,\bv}^{-1}A^n\bv,
\label{1.4}
\end{equation}
(where $\mathfrak A_{n}$ is given by \eqref{fraka}), and furthermore, 
$$
\mathfrak C_{A,\bv}^{-1}A\mathfrak C_{A,\bv}=C_{\boldsymbol\ell}(\mathfrak P_{A,\bv}), \quad 
\mathfrak C_{A,\bv}^{-1}{\bf v}={\bf e}_1,
$$
i.e., the pair $(A,\bv)$ is similar to the pair $(C_{\boldsymbol\ell}(\mathfrak P_{A,\bv}),{\bf e}_1)$.

\smallskip

{\rm (2)} If the pair $(\bu,A)$ (with $\bu\in\mathbb F^{1\times n}$) is observable, then
\begin{equation}
\mathfrak P_{\bu,A}(z)=z^n-\bu A^n\mathfrak O_{\bu, A}^{-1}\mathfrak A_{n}(z)^\top,
\label{1.4u}
\end{equation}
and furthermore, the pair $(\bu,A)$ is similar to the pair $({\bf e}_1^\top, C_{\bf r}(\mathfrak P_{\bu,A}))$, as
$$
\mathfrak O_{\bu, A}A\mathfrak O_{\bu,A}^{-1}=C_{\bf r}(\mathfrak P_{\bu,A})\quad\mbox{and}\quad
{\bf u}\mathfrak O_{\bu, A}^{-1}={\bf e}_1^\top.
$$
\label{P:2.15}
\end{proposition}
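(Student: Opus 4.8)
The plan is to prove part (1) directly from the definitions and to obtain part (2) by a transpose/duality argument. For part (1), suppose $(A,\bv)$ is controllable, so $\mathfrak C_{A,\bv}$ is invertible. By Remark \ref{R:3:9}, $\deg\mathfrak P_{A,\bv}=n$, so $\mathfrak P_{A,\bv}(z)=z^n+\sum_{k=0}^{n-1}z^kb_k$ for the (unique) coefficients $b_0,\ldots,b_{n-1}\in\mathbb F$ determined by the dependence relation \eqref{2.10}, namely $A^n\bv+\sum_{k=0}^{n-1}A^k\bv\,b_k=0$. Writing the column $\bb=[b_0\ \cdots\ b_{n-1}]^\top$, this relation reads $A^n\bv+\mathfrak C_{A,\bv}\bb=0$, i.e. $\bb=-\mathfrak C_{A,\bv}^{-1}A^n\bv$, which is exactly the coefficient formula in \eqref{1.4} once one recalls the identification $g(z)=\mathfrak A_n(z)\bg$ of \eqref{fraka}.

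Next I would establish the similarity statement. Let $T=\mathfrak C_{A,\bv}$. Then $T\be_1=\bv$ (the first column of the controllability matrix), which gives $T^{-1}\bv=\be_1$ immediately. For the conjugation $T^{-1}AT$, I would compute $AT$ column by column: $A\cdot A^{k-1}\bv=A^k\bv$ is the $(k+1)$-st column of $T$ for $k=1,\ldots,n-1$, i.e. $AT\be_k=T\be_{k+1}$, hence $T^{-1}AT\be_k=\be_{k+1}$ for $k\le n-1$; and for $k=n$, $AT\be_n=A^n\bv=-T\bb=-\sum_{j=0}^{n-1}T\be_{j+1}b_j$, so $T^{-1}AT\be_n=-\sum_{j=0}^{n-1}\be_{j+1}b_j=-[b_0\ \cdots\ b_{n-1}]^\top$. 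Comparing with the definition \eqref{2.12} of $C_{\boldsymbol\ell}(p)$ with $p=\mathfrak P_{A,\bv}$, this is precisely $C_{\boldsymbol\ell}(\mathfrak P_{A,\bv})$, so $T^{-1}AT=C_{\boldsymbol\ell}(\mathfrak P_{A,\bv})$ as claimed. Note that this reproves, with an explicit intertwiner, the consistency of \eqref{1.14f}: the pair $(C_{\boldsymbol\ell}(p),\be_1)$ has minimal polynomial $p$, and similarity of input pairs preserves $\mathfrak P$ by Remark \ref{R:1.2}(1).

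For part (2), I would pass to transposes. Given an observable output pair $(\bu,A)$, set $\bv=\bu^\top\in\mathbb F^{n\times 1}$ and consider the input pair $(A^\top,\bu^\top)$; its controllability matrix is $\mathfrak C_{A^\top,\bu^\top}=[\bu^\top\ A^\top\bu^\top\ \cdots\ (A^\top)^{n-1}\bu^\top]=\mathfrak O_{\bu,A}^\top$, which is invertible, so $(A^\top,\bu^\top)$ is controllable. Part (1) then yields $\mathfrak P_{A^\top,\bu^\top}(z)=z^n-\mathfrak A_n(z)(\mathfrak O_{\bu,A}^\top)^{-1}(A^\top)^n\bu^\top$ and $\mathfrak C_{A^\top,\bu^\top}^{-1}A^\top\mathfrak C_{A^\top,\bu^\top}=C_{\boldsymbol\ell}(\mathfrak P_{A^\top,\bu^\top})$, $\mathfrak C_{A^\top,\bu^\top}^{-1}\bu^\top=\be_1$. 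Transposing each of these identities and using that $g^{\br}$ for the output pair $(\bu,A)$ corresponds to $g^{\bl}$ for $(A^\top,\bu^\top)$ (so $\mathfrak P_{\bu,A}=\mathfrak P_{A^\top,\bu^\top}$, which one checks against \eqref{2.6}--\eqref{2.7}), together with $C_{\boldsymbol\ell}(p)^\top=C_{\bf r}(p)$ from \eqref{2.12}, converts them into $\mathfrak P_{\bu,A}(z)=z^n-\bu A^n\mathfrak O_{\bu,A}^{-1}\mathfrak A_n(z)^\top$, $\mathfrak O_{\bu,A}A\mathfrak O_{\bu,A}^{-1}=C_{\bf r}(\mathfrak P_{\bu,A})$, and $\bu\mathfrak O_{\bu,A}^{-1}=\be_1^\top$.

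The main thing to be careful about — the only place requiring attention rather than routine bookkeeping — is the bookkeeping of left versus right coefficients under transposition: coefficients of polynomials over a non-commutative $\mathbb F$ do not commute with matrix entries, so one must verify that the identification $\mathfrak P_{\bu,A}=\mathfrak P_{A^\top,\bu^\top}$ is legitimate (it is, because the defining relation $(\bu f)^{\br}(A)=\sum f_j\bu A^j$ transposes to $\sum (A^\top)^j\bu^\top f_j=(\bu^\top f)^{\bl}(A^\top)$ with the same scalars $f_j$ in the same left/right position, and the minimal polynomial is the monic generator of the corresponding ideal, computed via the same dependence relation). Once that dictionary is fixed, parts (1) and (2) are mirror images and no further obstacle arises.
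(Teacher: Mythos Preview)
Your argument for part~(1) is correct and is exactly the paper's proof: derive the coefficient vector $\mathbf b=-\mathfrak C_{A,\bv}^{-1}A^n\bv$ from the dependence relation \eqref{2.10} with $d=n$, read off $\mathfrak C_{A,\bv}^{-1}\bv=\mathbf e_1$ from the first column of $\mathfrak C_{A,\bv}$, and compute $\mathfrak C_{A,\bv}^{-1}A\mathfrak C_{A,\bv}$ column by column.

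Your treatment of part~(2), however, has a genuine gap. Over a noncommutative division ring, the plain transpose is \emph{not} an anti-homomorphism: for $A,B\in\mathbb F^{n\times n}$ one has $((AB)^\top)_{ij}=\sum_k A_{jk}B_{ki}$ while $(B^\top A^\top)_{ij}=\sum_k B_{ki}A_{jk}$, and these differ unless the entries commute. In particular $(A^\top)^j\neq (A^j)^\top$ in general, so your claimed identity ``$\big(\sum_j f_j\,\bu A^j\big)^\top=\sum_j (A^\top)^j\bu^\top f_j$'' fails, and with it the identification $\mathfrak P_{\bu,A}=\mathfrak P_{A^\top,\bu^\top}$. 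Concretely, the defining condition for $\mathfrak P_{\bu,A}$ is the \emph{left}-linear relation $\sum_j p_j\,\bu A^j=0$, whereas $\mathfrak P_{A^\top,\bu^\top}$ is determined by the \emph{right}-linear relation $\sum_j (A^\top)^j\bu^\top p_j=0$; there is no reason for these two monic generators to coincide over a skew field.

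The paper does not attempt any transpose trick; it simply says part~(2) ``is verified similarly.'' That direct mirror argument is straightforward: by observability, $\bu,\bu A,\ldots,\bu A^{n-1}$ are left linearly independent and $\bu A^n=-\sum_{k=0}^{n-1}c_k\,\bu A^k$ for unique $c_k\in\mathbb F$, i.e.\ $[c_0\ \cdots\ c_{n-1}]=-\bu A^n\,\mathfrak O_{\bu,A}^{-1}$, giving \eqref{1.4u}. Then $\bu\,\mathfrak O_{\bu,A}^{-1}=\mathbf e_1^\top$ is the first row of $\mathbf I_n$, and computing $\mathfrak O_{\bu,A}A\mathfrak O_{\bu,A}^{-1}$ row by row (the $i$-th row of $\mathfrak O_{\bu,A}A$ is $\bu A^i$, which is the $(i{+}1)$-st row of $\mathfrak O_{\bu,A}$ for $i<n$, and for $i=n$ equals $-[c_0\ \cdots\ c_{n-1}]\,\mathfrak O_{\bu,A}$) yields $C_{\mathbf r}(\mathfrak P_{\bu,A})$. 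Replace your transpose reduction with this direct computation.
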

\begin{proof}
If the pair $(A,{\bf v})$ is controllable, equality \eqref{2.10} holds with $d=n$ and the coefficients $b_k$
are defined by the formula
\begin{equation}
\sbm{b_0 \\ \vdots \\ b_{n-1}}=-\mathfrak C_{A,\bv}^{-1}A^n\bv.
\label{1.4d}
\end{equation}
The formula \eqref{1.4} is now immediate. By \eqref{1.2x}, $\mathfrak C_{A,\bv}^{-1}{\bf v}={\bf e}_1$ and furthermore,
\begin{align}
\mathfrak C_{A,\bv}^{-1}A\mathfrak C_{A,\bv}&=\mathfrak C_{A,\bv}^{-1}\begin{bmatrix}A\bv & \ldots &
A^{n-1}\bv & A^{n}\bv\end{bmatrix}\notag\\
&=\begin{bmatrix}{\bf e}_2 & \ldots & {\bf e}_{n-1} & \mathfrak C_{A,\bv}^{-1}A^{n}\bv\end{bmatrix}=
C_{\boldsymbol\ell}(\mathfrak P_{A,\bv})
\label{1.3}
\end{align}
where $C_{\boldsymbol\ell}(\mathfrak P_{A,\bv})$ is the left companion matrix of the polynomial \eqref{1.4}.
The second part is verified similarly.
\end{proof}
By Remark \ref{R:1.2}, similar pairs have the same minimal polynomial. For 
controllable or observable pairs, we have the converse.
\begin{theorem}
Controllable pairs $(A,\bv)$ and $(A^\prime,\bv^\prime)$ (observable pairs $(\bu,A)$ and $(\bu^\prime,A^\prime)$) are similar
in the sense of Definition \ref{D:1.1} if and only if their minimal polynomials
$\mathfrak P_{A,\bv}$ and $\mathfrak P_{A^\prime,\bv^\prime}$ ($\mathfrak P_{\bu,A}$ and $\mathfrak P_{\bu^\prime,A^\prime}$)
are equal.
\label{T:4.5a}
\end{theorem}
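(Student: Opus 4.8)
The plan is to derive the theorem as an immediate consequence of the canonical form established in Proposition~\ref{P:2.15}. The ``only if'' direction is already contained in Remark~\ref{R:1.2}: similar input pairs (resp. output pairs) have identical minimal polynomials. So all the work lies in the converse.

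First I would record that similarity of input pairs is an equivalence relation. Reflexivity (take $T={\bf I}_n$) and symmetry (replace $T$ by $T^{-1}$) are immediate from Definition~\ref{D:1.1}, and transitivity follows by composing similarity matrices: if $A^\prime=TAT^{-1}$, $\bv^\prime=T\bv$ and $A^{\prime\prime}=SA^\prime S^{-1}$, $\bv^{\prime\prime}=S\bv^\prime$, then $A^{\prime\prime}=(ST)A(ST)^{-1}$ and $\bv^{\prime\prime}=(ST)\bv$. The same holds for output pairs.

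Now assume $\mathfrak P_{A,\bv}=\mathfrak P_{A^\prime,\bv^\prime}=:p$ for controllable pairs $(A,\bv)$ and $(A^\prime,\bv^\prime)$. By Proposition~\ref{P:2.15}(1), the matrix $\mathfrak C_{A,\bv}$ conjugates $(A,\bv)$ to $(C_{\boldsymbol\ell}(p),{\bf e}_1)$ and $\mathfrak C_{A^\prime,\bv^\prime}$ conjugates $(A^\prime,\bv^\prime)$ to the \emph{same} pair $(C_{\boldsymbol\ell}(p),{\bf e}_1)$; this is the crucial point, namely that equal minimal polynomials yield the identical companion matrix (not merely a similar one). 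By symmetry and transitivity, $(A,\bv)\sim(A^\prime,\bv^\prime)$, with explicit intertwiner $T=\mathfrak C_{A^\prime,\bv^\prime}\,\mathfrak C_{A,\bv}^{-1}$; one verifies $A^\prime=TAT^{-1}$ and $\bv^\prime=\mathfrak C_{A^\prime,\bv^\prime}{\bf e}_1=T\bv$ directly from the two conjugation identities. The observable case is dual: using Proposition~\ref{P:2.15}(2) and Remark~\ref{R:1.2}(2), when $\mathfrak P_{\bu,A}=\mathfrak P_{\bu^\prime,A^\prime}=:p$ both $(\bu,A)$ and $(\bu^\prime,A^\prime)$ are similar to $({\bf e}_1^\top,C_{\bf r}(p))$, with intertwiner $T=\mathfrak O_{\bu,A}^{-1}\mathfrak O_{\bu^\prime,A^\prime}$.

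I do not expect a genuine obstacle: the statement is a corollary of Proposition~\ref{P:2.15}. The only points requiring care are the direction and placement of the similarity matrices in the composition, and a remark (worth flagging for the reader's intuition) that this rigidity is special to controllable and observable pairs — for a bare cyclic matrix one obtains only similarity of the minimal polynomials, hence similarity rather than equality of the associated companion matrices.
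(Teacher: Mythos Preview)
Your proof is correct and is essentially the paper's own argument, repackaged through Proposition~\ref{P:2.15}: the paper defines the same similarity matrix $T=\mathfrak C_{A',\bv'}\,\mathfrak C_{A,\bv}^{-1}$ and verifies $A'=TAT^{-1}$, $\bv'=T\bv$ directly from the equality of minimal polynomials (using \eqref{1.4d} for the $j=n$ step), rather than quoting the canonical form as a black box. Your route is slightly cleaner since Proposition~\ref{P:2.15} has already done the work.
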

\begin{proof}
The ``only if" part is contained in Remark \ref{R:1.2}. To justify the ``only if part",
let us assume that the minimal polynomials $\mathfrak P_{A,\bv}$ and $\mathfrak P_{A^\prime,\bv^\prime}$ of
two controllable pairs $(A,\bv)$ and $(A^\prime,\bv^\prime)$ are equal. Then the matrices $A$ and $A^\prime$
have the same dimensions (equal to $\deg \mathfrak P_{A,\bv}=\deg \mathfrak P_{A^\prime,\bv^\prime}=n$).

\smallskip

The controllability matrices ${\mathfrak C}_{A,\bv}$ and ${\mathfrak C}_{A^\prime,\bv^\prime}$
are both invertible, and we may let $T:={\mathfrak C}_{A^\prime,\bv^\prime}{\mathfrak C}_{A,\bv}^{-1}$.
We next show that
\begin{equation}
{A}^{\prime j}\bv^\prime=TA^j\bv\quad\mbox{for}\quad j=0,\ldots,n.
\label{4.21}
\end{equation}
Indeed, comparing the corresponding columns in the matrix equality
$$
T{\mathfrak C}_{A,\bv}={\mathfrak C}_{A^\prime,\bv^\prime}
$$
gives equalities \eqref{4.21} for $j=0,\ldots, n-1$.
From the formula \eqref{1.4d} for coefficients of $\mathfrak P_{A,\bv}$ and from the similar formula for
coefficients of $\mathfrak P_{A^\prime,\bv^\prime}$ we conclude (since
$\mathfrak P_{A,\bv}=\mathfrak P_{A^\prime,\bv^\prime}$) that
${\mathfrak C}_{A,\bv}^{-1}A^n\bv={\mathfrak C}_{A^\prime,\bv^\prime}^{-1}A^{\prime n}\bv^\prime$, which can be written
equivalently as  $A^{\prime n}\bv^\prime=TA^n\bv$ thus justifying equality \eqref{4.21} for $j=n$.
Letting $j=0$ in \eqref{4.21} gives  $\bv^\prime=T\bv$, while all other equalities in \eqref{4.21} imply
$$
A^\prime{\mathfrak C}_{A^\prime,\bv^\prime}=
\begin{bmatrix}A^\prime\bv^\prime &\ldots & A^{\prime n}\bv^\prime\end{bmatrix}=
\begin{bmatrix}TA\bv&\ldots & TA^{n}\bv\end{bmatrix}=TA{\mathfrak C}_{A,\bv},
$$
which is the same as $A^\prime=TAT^{-1}$, by the definition of $T$. Thus, the pairs
$(A,\bv)$ and $(A^\prime,\bv^\prime)$ are similar. The statement concerning observable pairs follows by similar arguments.
\end{proof}

\subsection{${\mathbb I}_{A,\bv}$ and ${\mathbb I}_{\bu,A}$ as generic right and left ideals} As a consequence of Theorem \ref{T:4.5a}, 
it follows that any right or left ideal in $\mathbb F[z]$ is necessarily of the form \eqref{2.7}, and that under
controllability/observability assumption the representing pair $(A,\bv)$ or $(\bu,A)$ is unique up to similarity.
\begin{proposition}
(1) Any right ideal ${\mathbb I}\subset \mathbb F[z]$ is of the form
$$
{\mathbb I}={\mathbb I}_{A,\bv}:=
\{p\in\mathbb F[z]: \; ({\bf v}p)^{\bl}(A)=0\}=\langle \mathfrak P_{A,{\bf v}}\rangle_{\bf r}
$$
for some controllable pair $(A,\bv)$. Controllable pairs $(A,\bv)$ and $(A^\prime,\bv^\prime)$
define the same ideal ${\mathbb I}_{A,\bv}={\mathbb I}_{A^\prime,\bv^\prime}$ if and only if they are similar.

\smallskip

(2) Any left ideal ${\mathbb I}\subset \mathbb F[z]$ is of the form
$$
{\mathbb I}=\mathbb I_{\bu,A}:=\{p\in\mathbb F[z]: \; (p\bu)^{\br}(A)=0\}=\langle \mathfrak P_{\bu,A}\rangle_{\boldsymbol\ell}
$$
for some observable pair $(\bu,A)$. Observable pairs $(\bu,A)$ and $(\bu^\prime,A^\prime)$
define the same ideal ${\mathbb I}_{\bu,A}={\mathbb I}_{\bu^\prime,A^\prime}$ if and only if they are similar.
\label{C:4.5}
\end{proposition}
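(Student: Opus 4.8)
The plan is to prove part (1) in two steps, the first being existence of a controllable representing pair and the second being the uniqueness-up-to-similarity statement; part (2) will then follow by the dual argument (transposing everything and switching left/right evaluations), so I would only remark on it. For existence, I would start from an arbitrary right ideal $\mathbb I\subset\mathbb F[z]$. Since the division algorithm holds in $\mathbb F[z]$, $\mathbb I$ is principal: $\mathbb I=\langle p\rangle_{\bf r}$ for a monic $p$ (the case $\mathbb I=\{0\}$ being handled by the empty/degenerate pair, and $\mathbb I=\mathbb F[z]$ by $p=1$). Set $n=\deg p$ and take the pair $(A,\bv)=(C_{\boldsymbol\ell}(p),{\bf e}_1)$. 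By \eqref{1.14m}, $C_{\boldsymbol\ell}(p)^{k-1}{\bf e}_1={\bf e}_k$ for $k=1,\ldots,n$, so the controllability matrix $\mathfrak C_{C_{\boldsymbol\ell}(p),{\bf e}_1}={\bf I}_n$ is invertible; hence $(A,\bv)$ is controllable. By Remark \ref{P:2.9}, $\mathfrak P_{C_{\boldsymbol\ell}(p),{\bf e}_1}=p$, so by \eqref{2.7}, $\mathbb I_{A,\bv}=\langle \mathfrak P_{A,\bv}\rangle_{\bf r}=\langle p\rangle_{\bf r}=\mathbb I$. This settles existence.

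For the uniqueness statement, the ``if'' direction is exactly Remark \ref{R:1.2}(1): similar input pairs have $\mathbb I_{A,\bv}=\mathbb I_{A^\prime,\bv^\prime}$. For the ``only if'' direction, suppose $(A,\bv)$ and $(A^\prime,\bv^\prime)$ are both controllable and $\mathbb I_{A,\bv}=\mathbb I_{A^\prime,\bv^\prime}$. Writing these ideals via their monic generators as in \eqref{2.7}, the equality of ideals forces $\mathfrak P_{A,\bv}=\mathfrak P_{A^\prime,\bv^\prime}$ (two monic polynomials generating the same right ideal must coincide, since each divides the other on the right). Now Theorem \ref{T:4.5a} applies verbatim: controllable pairs with equal minimal polynomials are similar. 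Chaining these implications gives the equivalence. Part (2) is obtained by the same scheme, using $(\bu,A)=({\bf e}_1^\top,C_{\bf r}(p))$, the observability of that pair (its observability matrix is ${\bf I}_n$ by the transpose of \eqref{1.14m}), the identity $\mathfrak P_{{\bf e}_1^\top,C_{\bf r}(p)}=p$ from \eqref{1.14f}, Remark \ref{R:1.2}(2), and the observable-pair half of Theorem \ref{T:4.5a}.

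I do not expect a serious obstacle here, since this proposition is essentially a packaging of results already established — principality of one-sided ideals (Section 2), the computation $\mathfrak P_{C_{\boldsymbol\ell}(p),{\bf e}_1}=p$ (Remark \ref{P:2.9}), and the rigidity Theorem \ref{T:4.5a}. The one point deserving a sentence of care is the passage from ``same ideal'' to ``same minimal polynomial'': one must note that the monic generator of a one-sided principal ideal in $\mathbb F[z]$ is unique, which follows because if $\langle p\rangle_{\bf r}=\langle q\rangle_{\bf r}$ with $p,q$ monic then $p=qa$ and $q=pb$ for some $a,b\in\mathbb F[z]$, forcing $a,b\in\mathbb F^\times$ by degree count and then $a=b=1$ by comparing leading coefficients. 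The only mild subtlety is bookkeeping for the trivial ideals $\{0\}$ and $\mathbb F[z]$, which I would dispatch in a single sentence (or simply restrict attention to proper nonzero ideals, as is implicitly intended).
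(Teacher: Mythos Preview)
Your proposal is correct and follows essentially the same route as the paper: both take the companion-matrix pair $(C_{\boldsymbol\ell}(p),{\bf e}_1)$ for existence (invoking \eqref{1.14f}) and then appeal to Theorem \ref{T:4.5a} for the uniqueness-up-to-similarity, with part (2) handled dually. Your version simply spells out more of the bookkeeping (controllability matrix equals ${\bf I}_n$, uniqueness of the monic generator, trivial ideals) that the paper leaves implicit.
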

\begin{proof}
Given a right ideal $\mathbb I\subset \mathbb F[z]$, let $f$ be its generator.
The pair  $(C_{\boldsymbol\ell}(f),{\bf e}_1)$ is controllable and since
$\mathfrak P_{C_{\boldsymbol\ell}(f),{\bf e}_1}=f$ (by \eqref{1.14f}), we have
$$
\mathbb I=\langle f\rangle_{\bf r}=\langle \mathfrak P_{C_{\boldsymbol\ell}(f),{\bf e}_1}\rangle_{\bf r}=
{\mathbb I}_{C_{\boldsymbol\ell}(f),{\bf e}_1}.
$$
By Theorem \ref{T:4.5a}, two controllable pairs $(A,\bv)$ and $(A^\prime,\bv^\prime)$ are similar
if and only if their minimal polynomials are equal, i.e.,
$$
{\mathbb I}_{A,\bv}=\langle\mathfrak P_{A,\bv}\rangle_{\bf r}=\langle\mathfrak P_{A^\prime,\bv^\prime}\rangle_{\bf r}=
{\mathbb I}_{A^\prime,\bv^\prime}.
$$
This completes the proof of part (1). Part (2) follows similarly.
\end{proof}
\begin{remark}
Any two-sided ideal ${\mathbb I}\subset \mathbb F[z]$ is of the form ${\mathbb I}={\mathbb I}_{A,\bv}$
for a unique (up to similarity) controllable pair $(A,\bv)$ such that $\mathfrak C_{A,\bv}^{-1}A^n\bv\in Z_{\mathbb F}^{n\times 1}$.

\smallskip

Alternatively, any two-sided ideal ${\mathbb I}\subset \mathbb F[z]$ is of the form ${\mathbb I}=\mathbb I_{\bu,A}$
for a unique (up to similarity) observable pair $(\bu,A)$ such that $\bu A^n\mathfrak O_{\bu, A}^{-1}\in Z_{\mathbb F}^{1\times n}$.

\smallskip

{\rm By Propositions \ref{P:2.15} and \ref{C:4.5}, the latter statement asserts that
any two-sided ideal is a left or right ideal generated by a polynomial in $Z_{\mathbb F}[z]$, which is obviously true.}
\label{C:4.5a}
\end{remark}
We record several concrete examples of controllable and observable pairs that have already appeared above.
\begin{example}
{\rm For any monic $p\in\mathbb F[z]$ of degree $n$, the pairs $(C_{\boldsymbol\ell}(p),{\bf e}_{1})$ and $(C_{\bf r}(p),{\bf e}_n)$
are controllable, while the pairs $({\bf e}_{1}, C_{\bf r}(p)$ and $({\bf e}_{n}, C_{\boldsymbol\ell}(p)$ are observable
(since their minimal polynomials equal $p$ and $\deg p=n$)}.  
\label{E:3.2}
\end{example}
\begin{example}
{\rm If $\Gamma_{\bgamma}\in\mathbb F^{n\times n}$ is of the form \eqref{1.1j}, then the pair $(\Gamma_{\bgamma},{\bf e}_{1})$ is controllable, 
while the pair $({\bf e}_{n}^\top, \Gamma_{\bgamma})$ is observable (since their minimal polynomials are of degree $n$, by formulas \eqref{1.1k}
for $k=1$ and $k=n$, respectively).}
\label{E:3.3}
\end{example}
For the next example, we recall the notion of polynomial independence ($P$-independence) introduced in \cite{lam1}; 
see also \cite{ll1,lamler1,ll2}. 
\begin{definition}
{\rm A set $\{\alpha_1,\ldots,\alpha_n\}\subset \mathbb F$ is called {\em left (right) $P$-independent}
if the monic linear polynomials $\bp_{\alpha_1},\ldots, \bp_{\alpha_n}$ are left (right) coprime.}
\label{D:feb1}
\end{definition}
\begin{proposition}
Let $A\in\mathbb F^{n\times n}$ be diagonal and let ${\bf v}={\bf e}_1+\ldots+{\bf e}_n$:
\begin{equation}
A=\begin{bmatrix}\alpha_1 & & 0\vspace{-1mm} \\ &\ddots & \\ 0 & & \alpha_n\end{bmatrix}\quad\mbox{and}\quad
{\bf v}=\begin{bmatrix}1 \vspace{-1mm}\\ \vdots \\ 1\end{bmatrix}.
\label{4.9}
\end{equation}
The pair $(A,{\bf v})$ is controllable (the pair $(\bv^\top, A)$ is observable) 
if and only if the set $\{\alpha_1,\ldots,\alpha_n\}$ is left (right) $P$-independent.
\label{P:br4}
\end{proposition}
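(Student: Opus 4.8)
The plan is to reduce the controllability (resp.\ observability) of the pair in \eqref{4.9} to a statement about the degree of a least common multiple of linear polynomials, and then to invoke the characterization of $P$-independence. First I would compute $\mathfrak P_{A,{\bf v}}$ explicitly. Since $A$ is diagonal, $A^{j}{\bf v}$ is the column with entries $\alpha_1^{j},\ldots,\alpha_n^{j}$, so for $f(z)=\sum_j f_jz^j$ the $k$-th entry of $({\bf v}f)^{\bl}(A)=\sum_j A^j{\bf v}f_j$ is $\sum_j\alpha_k^jf_j=f^{\bl}(\alpha_k)$. Hence $({\bf v}f)^{\bl}(A)=0$ if and only if $f^{\bl}(\alpha_k)=0$ for all $k$, which by Remark~\ref{R:break} means $f\in\bigcap_{k=1}^n\langle\bp_{\alpha_k}\rangle_{\bf r}=\langle{\bf lrcm}(\bp_{\alpha_1},\ldots,\bp_{\alpha_n})\rangle_{\bf r}$. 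Thus $\mathbb I_{A,{\bf v}}=\langle{\bf lrcm}(\bp_{\alpha_1},\ldots,\bp_{\alpha_n})\rangle_{\bf r}$ and therefore $\mathfrak P_{A,{\bf v}}={\bf lrcm}(\bp_{\alpha_1},\ldots,\bp_{\alpha_n})$.

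Next I would use Remark~\ref{R:3:9}: the pair $(A,{\bf v})$ is controllable if and only if $\deg\mathfrak P_{A,{\bf v}}=n$, i.e.\ if and only if $\deg{\bf lrcm}(\bp_{\alpha_1},\ldots,\bp_{\alpha_n})=\sum_{k=1}^n\deg\bp_{\alpha_k}$. Now Definition~\ref{D:feb1} declares $\{\alpha_1,\ldots,\alpha_n\}$ left $P$-independent precisely when $\bp_{\alpha_1},\ldots,\bp_{\alpha_n}$ are left coprime, and by the basic theory of \cite{lam1,ll1} the latter is equivalent to $\deg{\bf lrcm}(\bp_{\alpha_1},\ldots,\bp_{\alpha_n})=n$; this gives the first half. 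As an aside worth recording, $\mathfrak C_{A,{\bf v}}$ is here literally the Vandermonde matrix $\big[\alpha_k^{\,j}\big]_{1\le k\le n,\ 0\le j\le n-1}$, so the argument above re-derives the fact that such a matrix is invertible exactly when its nodes are $P$-independent.

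The observable half is the exact dual. Since $A$ is diagonal, ${\bf v}^\top A^j=[\alpha_1^j\ \cdots\ \alpha_n^j]$, so for $p(z)=\sum_j p_jz^j$ the $k$-th entry of $(p{\bf v}^\top)^{\br}(A)=\sum_j p_j{\bf v}^\top A^j$ is $\sum_j p_j\alpha_k^j=p^{\br}(\alpha_k)$; hence, again by Remark~\ref{R:break}, $\mathfrak P_{{\bf v}^\top,A}={\bf llcm}(\bp_{\alpha_1},\ldots,\bp_{\alpha_n})$, and by Remark~\ref{R:3:9} the pair $({\bf v}^\top,A)$ is observable if and only if this polynomial has degree $n$, which is the right $P$-independence of $\{\alpha_1,\ldots,\alpha_n\}$.

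The one step that needs care is the passage between "the least common multiple of the $\bp_{\alpha_k}$ has maximal degree $n$" and the coprimality condition in Definition~\ref{D:feb1}: for three or more linear polynomials over a division ring this is strictly stronger than pairwise coprimality (conjugate triples in the quaternions already show the gap), so one must use the genuine $P$-independence notion of \cite{lam1} rather than a naive coprimality, and keep the left/right conventions consistent with \eqref{2.1} and Remark~\ref{R:break}. The remaining ingredients — identifying $A^j{\bf v}$ and ${\bf v}^\top A^j$ and recognizing the resulting evaluations as point evaluations — are routine.
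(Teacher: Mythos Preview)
Your proposal is correct and follows essentially the same route as the paper's own proof: compute $({\bf v}f)^{\bl}(A)$ entrywise to identify $\mathfrak P_{A,{\bf v}}$ with ${\bf lrcm}(\bp_{\alpha_1},\ldots,\bp_{\alpha_n})$, then invoke the degree criterion for controllability (Remark~\ref{R:3:9}) and the equivalence between $\deg{\bf lrcm}=n$ and left coprimality (Definition~\ref{D:feb1}), with the observable half handled dually. The paper additionally records the identity $\mathfrak C_{A,{\bf v}}=V_{\boldsymbol\ell}(\balpha)$ and the equality $\bmu_{A,\boldsymbol\ell}=\mathfrak P_{A,{\bf v}}$, both of which you mention as asides, so there is no substantive difference.
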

\begin{proof}
It follows from \eqref{2.2} and \eqref{2.6}, that for every polynomial $f\in\mathbb F[z]$,
\begin{equation}
f^{\bl}(A)=\begin{bmatrix}f^{\bl}(\alpha_1) & & 0 \vspace{-1mm} \\ &\ddots & \\ 0 & & f^{\bl}(\alpha_n)\end{bmatrix},\quad
({\bf v}f)^{\bl}(A)=\begin{bmatrix} f^{\bl}(\alpha_1)\vspace{-1mm} \\ \vdots \\ f^{\bl}(\alpha_n)\end{bmatrix},
\label{4.10}
\end{equation}
and hence, the ideals $\mathbb I_{A,{\bf r}}=\mathbb I_{A,{\bf v}}$ consist of all polynomials that vanish on the left at 
$\alpha_1,\ldots,\alpha_n$. Thus,
$\mathbb I_{A,{\bf r}}=\mathbb I_{A,{\bf v}}={\displaystyle\bigcap_{j=1}^n\langle \bp_{\alpha_i}\rangle_{\bf r}}$ and subsequently, 
\begin{equation}
\bmu_{A,\boldsymbol\ell}=\mathfrak P_{A,{\bf v}}={\bf lrcm}(\bp_{\alpha_1}, \ldots,\bp_{\alpha_n}).
\label{4.10a}
\end{equation}
One can see from the definition \eqref{1.2x} that the controllability matrix of the pair \eqref{4.9} is equal to the 
left Vandermonde matrix 
$$
\mathfrak C_{A,{\bf v}}=V_{\boldsymbol\ell}(\balpha):=\left[\alpha_i^{j-1}\right]_{i,j=1}^n,\quad
\balpha:=(\alpha_1,\ldots,\alpha_n).
$$
This matrix is invertible (i.e., the pair $(A,{\bf v})$ is controllable) if and only if $\deg \mathfrak P_{A,{\bf v}}=n$
(we recall that $\deg \mathfrak P_{A,{\bf v}}$ equals the maximal number of leftmost right linearly independent columns in 
$\mathfrak C_{A,{\bf v}}$). Due to \eqref{4.10a}, $\deg \mathfrak P_{A,{\bf v}}=n$ if and only if the polynomials 
$\bp_{\alpha_1}, \ldots,\bp_{\alpha_n}$ are left coprime, i.e., the set $\{\alpha_1,\ldots,\alpha_n\}$ is left $P$-independent.
The statement concerning the pair $(\bv^\top, A)$ is justified similarly. 
\end{proof}
The polynomials that are (left or right) minimal polynomials of an algebraic set in a division ring 
are called {\em Wedderburn polynomials}; we refer to \cite{lam1,ll1,ll2,llo} for the thorough account on the subject.
Since Wedderburn polynomials can be characterized as least common multiples of coprime monic linear polynomials,
the formula \eqref{4.10a} asserts that the pair $(A,{\bf v})$ is controllable if and only if its minimal polynomial 
$\mathfrak P_{A,{\bf v}}$ is a Wedderburn polynomial.

\smallskip

In the next proposition we will use the minimal polynomial of a controllable pair to get a closed (and fairly explicit)
formula for the least right common multiple of several given left-coprime polynomials. 
 
\begin{proposition}
Given left-coprime monic $f_1,\ldots,f_k\in\mathbb F[z]$, let 
\begin{equation}
C_{\boldsymbol\ell}(f_1,\ldots,f_k):=\begin{bmatrix}C_{\boldsymbol\ell}(f_1) & & 0 \\ &\ddots & \\ 0&& C_{\boldsymbol\ell}(f_k)\end{bmatrix},
\quad E=\begin{bmatrix}{\bf e}_{1,n_1}\\ \vdots \\ {\bf e}_{1,n_k}\end{bmatrix}, 
\label{4.10b}
\end{equation}
where $n_j=\deg f_j$, and  let $n:=n_1+\ldots+n_k$. 
Then the pair $(C_{\boldsymbol\ell}(f_1,\ldots,f_k),E)$ is controllable and its minimal polynomial is given by
\begin{align}
\mathfrak P_{C_{\boldsymbol\ell}(f_1,\ldots,f_k),E}&={\bf lrcm}(f_1,\ldots,f_k)\label{4.10c}\\
&=z^n-\mathfrak A_{n}(z)
\mathfrak C_{C_{\boldsymbol\ell}(f_1,\ldots,f_k),E}^{-1}\begin{bmatrix}C_{\boldsymbol\ell}(f_1)^n {\bf e}_{1,n_1}\vspace{-1mm} \\ \vdots \\
C_{\boldsymbol\ell}(f_k)^n {\bf e}_{1,n_k}\end{bmatrix}.\notag
\end{align}
\label{P:2.89}
\end{proposition}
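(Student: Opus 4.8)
The plan is to reduce everything to facts already established for controllable pairs and for companion matrices. First I would identify the block-diagonal matrix $A:=C_{\boldsymbol\ell}(f_1,\ldots,f_k)$ together with the vector $E$ from \eqref{4.10b}, and verify controllability of the pair $(A,E)$. By Remark \ref{R:3:9}, it suffices to show that $\deg \mathfrak P_{A,E}=n$; equivalently, that the only relation $\sum_{j=0}^{d}A^j E\,b_j=0$ with monic top coefficient forces $d=n$. Because $A$ is block diagonal and $E$ has blocks ${\bf e}_{1,n_i}$, we have $A^jE=\big[C_{\boldsymbol\ell}(f_1)^j{\bf e}_{1,n_1};\ \ldots;\ C_{\boldsymbol\ell}(f_k)^j{\bf e}_{1,n_k}\big]$, so any right-annihilating polynomial $g$ of $(A,E)$ must satisfy $({\bf e}_{1,n_i}g)^{\bl}(C_{\boldsymbol\ell}(f_i))=0$ for each $i$, i.e. $g\in\langle f_i\rangle_{\bf r}$ for all $i$ by \eqref{1.14f} and \eqref{2.7}. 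Hence $\mathbb I_{A,E}=\bigcap_{i=1}^k\langle f_i\rangle_{\bf r}$, whose generator is by definition ${\bf lrcm}(f_1,\ldots,f_k)$; this simultaneously proves \eqref{4.10c} (first line) and, since the $f_i$ are left-coprime, that $\deg {\bf lrcm}(f_1,\ldots,f_k)=n_1+\cdots+n_k=n$, which is exactly $\deg\mathfrak P_{A,E}=n$, i.e. controllability of $(A,E)$.

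Once controllability is in hand, the second equality in \eqref{4.10c} is immediate from the explicit formula \eqref{1.4} in Proposition \ref{P:2.15}, applied to the controllable pair $(A,E)$: $\mathfrak P_{A,E}(z)=z^n-\mathfrak A_n(z)\,\mathfrak C_{A,E}^{-1}A^nE$, and then one only has to rewrite $A^nE$ using the block structure as $\big[C_{\boldsymbol\ell}(f_1)^n{\bf e}_{1,n_1};\ \ldots;\ C_{\boldsymbol\ell}(f_k)^n{\bf e}_{1,n_k}\big]$, which is precisely the column vector displayed on the right-hand side of \eqref{4.10c}.

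The step that needs the most care is the claim $\mathbb I_{A,E}=\bigcap_{i=1}^k\langle f_i\rangle_{\bf r}$, and in particular the direction showing that a polynomial annihilating each block also annihilates the pair $(A,E)$. This follows from the block-diagonal structure: for any $g\in\mathbb F[z]$, $({\bf e}_{1,n}\,g)^{\bl}$ — with ${\bf e}_{1,n}$ here denoting $E$ — decomposes blockwise because $A$ is block diagonal and $E$ respects the blocks, using that scalars $0,1$ lie in $Z_{\mathbb F}$ exactly as in the derivation of \eqref{2.11}; so $(E g)^{\bl}(A)=0$ iff $({\bf e}_{1,n_i}g)^{\bl}(C_{\boldsymbol\ell}(f_i))=0$ for every $i$. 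Combining with Remark \ref{P:2.9} (which gives $\mathfrak P_{C_{\boldsymbol\ell}(f_i),{\bf e}_1}=f_i$, hence $({\bf e}_{1,n_i}g)^{\bl}(C_{\boldsymbol\ell}(f_i))=0\iff g\in\langle f_i\rangle_{\bf r}$) closes the argument. The left-coprimeness hypothesis is used only to guarantee that the degree of this intersection's generator is full, i.e. $n$; without it the pair would fail to be controllable, but the ${\bf lrcm}$ identity would still hold.
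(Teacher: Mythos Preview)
Your proposal is correct and follows essentially the same approach as the paper: identify $\mathfrak P_{A,E}$ with ${\bf lrcm}(f_1,\ldots,f_k)$ via the block-diagonal structure and \eqref{1.14f}, use left-coprimeness to conclude $\deg\mathfrak P_{A,E}=n$ and hence controllability, and then invoke formula \eqref{1.4} for the explicit expression. The paper's proof is merely a more compressed version of your argument.
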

\begin{proof}
By the block-diagonal structure of $C_{\boldsymbol\ell}(f_1,\ldots,f_k)$ and due to \eqref{1.14f},
$$
\mathfrak P_{C_{\boldsymbol\ell}(f_1,\ldots,f_k),E}={\bf lrcm}(\mathfrak P_{C_{\boldsymbol\ell}(f_1),{\bf e}_{1,n_1}},\ldots,
\mathfrak P_{C_{\boldsymbol\ell}(f_k),{\bf e}_{1,n_k}})= {\bf lrcm}(f_1,\ldots,f_k). 
$$
Since $f_1,\ldots,f_k$ are left coprime, 
$$
\deg ({\bf lrcm}(f_1,\ldots,f_k))=\deg f_1+\ldots+\deg f_k=n.
$$
Hence, the two last formulas imply $\deg \mathfrak P_{C_{\boldsymbol\ell}(f_1,\ldots,f_k),E}=n$. Therefore, 
the matrix $\mathfrak C_{C_{\boldsymbol\ell}(f_1,\ldots,f_k),E}$ is invertible
(i.e., the pair $(C_{\boldsymbol\ell}(f_1,\ldots,f_k),E)$ is controllable), and we can apply formula \eqref{1.4} to complete 
the proof of \eqref{4.10c}. 
\end{proof}
\noindent
\section{Cyclic matrices and similarity reduction} A matrix $A$ over a division ring $\mathbb F$ is called {\em cyclic} if it admits a (left or right) cyclic vector, i.e., 
if it can be embedded into a controllable or an observable pair. Alternatively, cyclic matrices can be defined 
as the ones similar to companion matrices or as the matrices having one non-constant invariant factor.
All these equivalent definitions are recorded below.
\begin{theorem}
Given a matrix $A\in\mathbb F^{n\times n}$, the following are equivalent:
\begin{enumerate}
\item There exists $\bv\in\mathbb F^{n\times 1}$ such that the pair $(A,\bv)$ is controllable.
\item There exists ${\bf u}\in\mathbb F^{1\times n}$ such that the pair $(\bu, A)$ is observable.
\item $A$ is similar to a (left or right) companion matrix.
\item The pencil $z{\bf I}_n-A$ is equivalent to a polynomial matrix $\sbm{{\bf I}_{n-1}&0\\ 0& h(z)}$.
\end{enumerate}
If this is the case (i.e., if $A$ is cyclic), then
\begin{itemize}
\item[(a)] $A\sim C_{\boldsymbol\ell}(\mathfrak P_{A,\bv})\sim C_{\bf r}(\mathfrak P_{\bu,A})$ for any cyclic vectors $\bv,\bu$ of $A$.
\item[(b)] If $A=TC_{\boldsymbol\ell}(f)T^{-1}=SC_{\bf r}(g)S^{-1}$ for some $f,g\in\mathbb F[z]$,
then the vectors $\bv=T{\bf e}_1$ and $\bu={\bf e}_1^\top S^{-1}$ are cyclic for $A$ and furthermore,
$$f=\mathfrak P_{A,\bv}\quad\mbox{and}\quad g=\mathfrak P_{\bu,A}.
$$
\item[(c)] The invariant factor $h$ of $A$ from part (4) is necessarily of the form $h=\mathfrak P_{A,{\bf w}}$ for some 
cyclic vector ${\bf w}$ of $A$. 
\end{itemize}
\label{P:ex}
\end{theorem}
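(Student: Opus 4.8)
The strategy is to close a loop of implications $(3)\Rightarrow(1)\Rightarrow(4)\Rightarrow(3)$ and, separately, $(3)\Rightarrow(2)$ with $(2)$ folding back into the loop by the transpose symmetry already used throughout Section 3. Most of the ingredients are already in place: Example \ref{E:3.2} gives $(3)\Rightarrow(1)$ and $(3)\Rightarrow(2)$ directly (a left or right companion matrix carries an explicit cyclic vector, and controllability/observability is similarity-invariant as noted right after \eqref{1.2x} and \eqref{1.2u}); Proposition \ref{P:2.15}(1) gives $(1)\Rightarrow(3)$ via $\mathfrak C_{A,\bv}^{-1}A\mathfrak C_{A,\bv}=C_{\boldsymbol\ell}(\mathfrak P_{A,\bv})$, and Proposition \ref{P:2.15}(2) gives $(2)\Rightarrow(3)$; and the equivalence $(3)\Leftrightarrow(4)$ is exactly the observation (quoted from \cite{salom}) that $z{\bf I}_n-C_{\boldsymbol\ell}(p)$ and $z{\bf I}_n-C_{\bf r}(p)$ are both equivalent over $\mathbb F^{n\times n}[z]$ to $\sbm{{\bf I}_{n-1}&0\\ 0& p(z)}$, combined with \cite[Theorem A2]{salom} (two matrices are similar iff their pencils are equivalent). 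So the first paragraph of the proof assembles these citations into the cycle of four equivalences.

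For part (a): assuming $A$ is cyclic and $\bv,\bu$ are any right and left cyclic vectors, Proposition \ref{P:2.15} already states $A\sim C_{\boldsymbol\ell}(\mathfrak P_{A,\bv})$ and $A\sim C_{\bf r}(\mathfrak P_{\bu,A})$; transitivity of $\sim$ then yields the displayed chain. For part (b): suppose $A=TC_{\boldsymbol\ell}(f)T^{-1}$. Then $(A,T{\bf e}_1)$ is similar (via $T$) to the controllable pair $(C_{\boldsymbol\ell}(f),{\bf e}_1)$, hence controllable, so $\bv=T{\bf e}_1$ is a right cyclic vector; and by Remark \ref{R:1.2}(1), $\mathfrak P_{A,\bv}=\mathfrak P_{C_{\boldsymbol\ell}(f),{\bf e}_1}=f$, the last equality being \eqref{1.14f}. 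The claim about $\bu={\bf e}_1^\top S^{-1}$ and $g=\mathfrak P_{\bu,A}$ is the mirror statement, using that $(C_{\bf r}(g),{\bf e}_1)$ — more precisely the output pair $({\bf e}_1^\top,C_{\bf r}(g))$ — is observable (Example \ref{E:3.2}), together with Remark \ref{R:1.2}(2) and \eqref{1.14f}.

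Part (c) is the one genuinely new point. Here I would argue as follows. If $z{\bf I}_n-A$ is equivalent to $\sbm{{\bf I}_{n-1}&0\\ 0& h(z)}$, then $z{\bf I}_n-A$ is also equivalent to $z{\bf I}_n-C_{\boldsymbol\ell}(h)$ (both reduce to the same diagonal form, using the companion-matrix reduction quoted from \cite{salom}), whence $A\sim C_{\boldsymbol\ell}(h)$ by \cite[Theorem A2]{salom}. Write $A=TC_{\boldsymbol\ell}(h)T^{-1}$ and set ${\bf w}=T{\bf e}_1$; by part (b) already proved, ${\bf w}$ is a right cyclic vector of $A$ and $h=\mathfrak P_{A,{\bf w}}$. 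One should note for cleanliness that the pencil equivalence in \cite{salom} forces $h$ to be monic of the appropriate degree (the determinant of $\sbm{{\bf I}_{n-1}&0\\ 0&h(z)}$ must match $\det(z{\bf I}_n-A)$ up to a unit, and the reduction can be normalized so $h$ is monic), so that $\deg h=n$ is consistent with controllability of $(C_{\boldsymbol\ell}(h),{\bf e}_1)$ — this is the only place one has to be slightly careful about normalization conventions, and it is the mild technical obstacle rather than a conceptual one. Everything else is bookkeeping on top of Proposition \ref{P:2.15}, Remark \ref{R:1.2}, \eqref{1.14f}, and the cited results of \cite{salom}.
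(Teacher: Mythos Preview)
Your proposal is correct and takes essentially the same approach as the paper: both prove $(1)\Leftrightarrow(3)$ and $(2)\Leftrightarrow(3)$ via Proposition \ref{P:2.15} together with similarity-invariance of controllability/observability, handle $(3)\Leftrightarrow(4)$ and part (c) through the pencil-equivalence results quoted from \cite{salom}, and argue (a), (b) identically using Remark \ref{R:1.2} and \eqref{1.14f}. One small caveat: your parenthetical appeal to $\det(z{\bf I}_n-A)$ to force $\deg h=n$ is not available over a non-commutative division ring, so that normalization should instead be justified (as the paper tacitly does) by the invariant-factor theory for $\mathbb F[z]$-matrices rather than by a determinant count.
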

\begin{proof} Each one of the properties (1)-(4) is similarity invariant.  By \eqref{1.3} and \eqref{1.2u}, $A$ admits 
a right (left) cyclic vector if and only if it is similar to a left (right) companion matrix. By \eqref{1.14g}, we now conclude that
the statements (1), (2), (3) are equivalent. Since (4) holds (with $h=f$) for any companion matrix $C_{\boldsymbol\ell}(f)$,
the equivalence $(3)\Leftrightarrow(4)$ follows. 

\smallskip

The statement (a) follows from Proposition \ref{P:2.15}. If $A=TC_{\boldsymbol\ell}(f)T^{-1}$ and 
$\bv=T{\bf e}_1$, then the input pairs $(A,\bv)$ and $(C_{\boldsymbol\ell}(f),{\bf e}_1)$ are similar (see
Definition \ref{D:1.1}) and therefore, $\mathfrak P_{A,\bv}=\mathfrak P_{C_{\boldsymbol\ell}(f),{\bf e}_1}=f$,
by Remark \ref{R:1.2} and due to \eqref{1.14f}. The rest of the part (b) follows from similarity of output pairs $(\bu,A)$ and
$({\bf e}_1^\top, C_{\bf r}(g))$. Finally, being an invariant factor of $A$, the polynomial $h$ in (4) is also an invariant factor
for its companion matrix $C_{\boldsymbol\ell}(h)$. Therefore, the pencils $z{\bf I}_n-A$ and $z{\bf I}_n-C_{\boldsymbol\ell}(h)$
are equivalent and hence, $A\sim C_{\boldsymbol\ell}(h)$. Now part (c) follows from (b).
\end{proof}

\subsection{Similar polynomials} In the contrast to the commutative 
setting of Proposition \ref{P:1.2}, similar companion matrices over a noncommutative division ring do not have to be equal
(for an example, take two similar elements $\alpha\sim\alpha'$ (i.e., $\alpha \beta=\beta\alpha'$ for some $\beta\neq 0$)
and consider the companion matrices $C_{\boldsymbol\ell}(\bp_\alpha)=\alpha$ and $C_{\boldsymbol\ell}(\bp_{\alpha'})=\alpha'$). 
The polynomials generating similar companion matrices are called {\em similar}; in notation: $f\approx g$.
\begin{proposition}
For polynomials $f,g\in\mathbb F[z]$, the following are equivalent:
\begin{enumerate}
\item ${\bf lrcm}(f,h)=hg$ for some $h\in\mathbb F[z]$ such that $f, h$ are left coprime. 
\item ${\bf llcm}(g,p)=fp$ for some $p\in\mathbb F[z]$ such that $p, g$ are right coprime.
\item $fp=hg$ for some $h,p\in\mathbb F[z]$ such that $f, h$ are left coprime and $p, g$ are right coprime.
\item $C_{\boldsymbol\ell}(f)\sim C_{\boldsymbol\ell}(g)$.
\end{enumerate}
\label{P:2.2}
\end{proposition}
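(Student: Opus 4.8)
The plan is to establish the cycle of implications $(1)\Rightarrow(4)\Rightarrow(3)\Rightarrow(2)\Rightarrow(1)$, though in fact it will be cleaner to prove $(1)\Leftrightarrow(3)\Leftrightarrow(4)$ via the companion-matrix machinery already developed and treat $(2)$ symmetrically. The central bridge is to interpret each of the ideal-theoretic conditions in $(1)$--$(3)$ in terms of similarity of the controllable pair $(C_{\boldsymbol\ell}(f),{\bf e}_1)$ to another pair $(C_{\boldsymbol\ell}(g),{\bf e}_1)$ after an intermediate embedding. Concretely, I would first unpack $(4)$: by Remark \ref{P:2.9} and the equivalence-of-pencils criterion from \cite{salom} invoked just before \eqref{1.14g}, $C_{\boldsymbol\ell}(f)\sim C_{\boldsymbol\ell}(g)$ holds if and only if the pencils $z{\bf I}_n-C_{\boldsymbol\ell}(f)$ and $z{\bf I}_n-C_{\boldsymbol\ell}(g)$ are equivalent over $\mathbb F^{n\times n}[z]$, and both reduce to $\sbm{{\bf I}_{n-1}&0\\0&f(z)}$ and $\sbm{{\bf I}_{n-1}&0\\0&g(z)}$ respectively; so $(4)$ forces $\deg f=\deg g=n$ and that the two diagonal reductions are themselves equivalent.

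Next I would show $(1)\Rightarrow(4)$. Assume ${\bf lrcm}(f,h)=hg$ with $f,h$ left coprime. Left coprimeness of $f,h$ means $\langle f\rangle_{\bf r}+\langle h\rangle_{\bf r}=\mathbb F[z]$, and combined with $\langle f\rangle_{\bf r}\cap\langle h\rangle_{\bf r}=\langle hg\rangle_{\bf r}$ this is the division-ring analogue of the Chinese Remainder/Bezout setup: one has $ph+qf=1$ for suitable $p,q$, and the quotient module $\mathbb F[z]/\langle hg\rangle_{\bf r}$ splits as $\mathbb F[z]/\langle f\rangle_{\bf r}\oplus\mathbb F[z]/\langle h\rangle_{\bf r}$ as a right $\mathbb F[z]$-module. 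Now the key move: multiplication by $z$ on $\mathbb F[z]/\langle f\rangle_{\bf r}$ is (in the basis $1,z,\ldots,z^{n-1}$) exactly the companion matrix $C_{\boldsymbol\ell}(f)$ acting on the left—this is precisely the content of \eqref{1.14m} in Remark \ref{P:2.9}. Likewise, because $hg=fp$ would follow and $p,g$ are right coprime (which I will need to extract), the module $\mathbb F[z]/\langle hg\rangle_{\bf r}$ also splits the other way as $\mathbb F[z]/\langle g\rangle_{\bf r}\oplus\mathbb F[z]/\langle p\rangle_{\bf r}$-type pieces; matching the invariant-factor content on both sides via Theorem \ref{P:ex}(4) and the pencil-equivalence criterion yields $z{\bf I}-C_{\boldsymbol\ell}(f)\sim z{\bf I}-C_{\boldsymbol\ell}(g)$ after accounting for the complementary summands. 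I expect the cleanest formulation is: the right $\mathbb F[z]$-module $\mathbb F[z]/\langle f\rangle_{\bf r}$ is isomorphic to $\mathbb F[z]/\langle g\rangle_{\bf r}$ precisely when $f\approx g$, and all four conditions are translations of this single module isomorphism.

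For $(2)\Leftrightarrow(1)$ and $(3)\Leftrightarrow$ the rest, I would argue as follows. Condition $(3)$, $fp=hg$ with $f,h$ left coprime and $p,g$ right coprime, is the symmetric two-sided statement; $(1)$ gives $(3)$ by taking the same $h$ and setting $p$ from $hg=fp$ (the factorization ${\bf lrcm}(f,h)=hg$ already exhibits $hg\in\langle f\rangle_{\bf r}$, hence $hg=fp$ for some $p$, and one must check $p,g$ are right coprime—this follows by a degree count plus the coprimeness of $f,h$, since $\deg({\bf lrcm}(f,h))=\deg f+\deg h$). Conversely $(3)\Rightarrow(1)$: from $fp=hg$ with the stated coprimeness, $hg\in\langle f\rangle_{\bf r}\cap\langle h\rangle_{\bf r}={\bf lrcm}(f,h)\mathbb F[z]$ on the right side, and a degree argument ($\deg f+\deg h=\deg(hg)$ because $f,h$ left coprime) forces ${\bf lrcm}(f,h)=hg$ up to a unit, which we normalize away by monicity. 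The equivalence $(2)\Leftrightarrow(3)$ is the left-right mirror image of $(1)\Leftrightarrow(3)$, using ${\bf llcm}$, left ideals, and $C_{\bf r}$ in place of ${\bf lrcm}$, right ideals, $C_{\boldsymbol\ell}$—and then \eqref{1.14g} reconciles $C_{\boldsymbol\ell}(f)\sim C_{\bf r}(f)$ so that $(4)$ can be phrased with either companion matrix.

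The main obstacle I anticipate is the degree bookkeeping that makes the coprimeness hypotheses transfer between the ${\bf lrcm}$ and the explicit factorization forms: one must repeatedly use the identity $\deg({\bf lrcm}(f,h))=\deg f+\deg h$ when $f,h$ are left coprime (which itself rests on the module-splitting, or equivalently on the observation in the proof of Proposition \ref{P:2.89} that $\deg{\bf lrcm}$ of left-coprime polynomials is the sum of degrees), and dually for ${\bf llcm}$ with right-coprime polynomials. The rest is a matter of chasing the dictionary: right ideals $\leftrightarrow$ left companion matrices $\leftrightarrow$ left evaluation (via \eqref{1.14f}), and the pencil-equivalence characterization of matrix similarity to translate "$C_{\boldsymbol\ell}(f)\sim C_{\boldsymbol\ell}(g)$" into a statement about invariant factors that the factorization $fp=hg$ directly witnesses.
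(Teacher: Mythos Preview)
Your overall architecture---reduce everything to the isomorphism of cyclic right modules $\mathbb F[z]/\langle f\rangle_{\bf r}\cong\mathbb F[z]/\langle g\rangle_{\bf r}$---is the right one, and this is indeed how the equivalence $(3)\Leftrightarrow(4)$ is handled in the reference the paper cites. Note, however, that the paper does not give a self-contained proof: it cites Ore for $(1)\Leftrightarrow(2)$, gives only a two-line argument for $(1)\Leftrightarrow(3)$, and defers $(3)\Leftrightarrow(4)$ to \cite{llo}. So your attempt is more ambitious than what the paper does.

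Two concrete gaps in your sketch:

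\smallskip
\noindent\textbf{(3)$\Rightarrow$(1).} Your degree argument is circular as written. You assert $\deg f+\deg h=\deg(hg)$ ``because $f,h$ are left coprime,'' but left coprimeness only gives $\deg({\bf lrcm}(f,h))=\deg f+\deg h$; to get $\deg(hg)=\deg f+\deg h$ you need $\deg g=\deg f$, which is what you are trying to prove. The paper's argument avoids this: write ${\bf lrcm}(f,h)=f\tilde p=h\tilde g$; since $hg\in\langle{\bf lrcm}(f,h)\rangle_{\bf r}$ we have $hg=h\tilde g q$ and $fp=f\tilde p q$ for the same $q$, whence $g=\tilde g q$, $p=\tilde p q$; right coprimeness of $p,g$ forces $q$ constant. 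Here the \emph{right} coprimeness of $p,g$ is essential---left coprimeness of $f,h$ alone is not enough.

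\smallskip
\noindent\textbf{(1)$\Rightarrow$(4).} Your plan to split $\mathbb F[z]/\langle hg\rangle_{\bf r}$ ``the other way'' as $\mathbb F[z]/\langle g\rangle_{\bf r}\oplus\mathbb F[z]/\langle p\rangle_{\bf r}$ does not work: right coprimeness of $p,g$ does not yield a Chinese-remainder decomposition of a \emph{right} module. What you get is only a short exact sequence $0\to\mathbb F[z]/\langle g\rangle_{\bf r}\xrightarrow{h\cdot}\mathbb F[z]/\langle hg\rangle_{\bf r}\to\mathbb F[z]/\langle h\rangle_{\bf r}\to 0$. The clean route is direct: left multiplication by $h$ induces a right-module map $\mathbb F[z]/\langle g\rangle_{\bf r}\to\mathbb F[z]/\langle f\rangle_{\bf r}$; surjectivity follows from $f,h$ left coprime, and injectivity from ${\bf lrcm}(f,h)=hg$ (so $ha\in\langle f\rangle_{\bf r}$ forces $ha\in\langle hg\rangle_{\bf r}$, hence $a\in\langle g\rangle_{\bf r}$). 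Since multiplication by $z$ on these cyclic modules is exactly $C_{\boldsymbol\ell}(f)$ and $C_{\boldsymbol\ell}(g)$ in the monomial bases (your observation via \eqref{1.14m}), the isomorphism gives the similarity.
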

Property (1) is the original definition of similar polynomials that appeared in \cite{ore}. 
The equivalence $(1)\Leftrightarrow(2)$ was shown in \cite[Theorem 1.18]{ore}. 
If $f,g$ satisfy (1), then $fp=hg$ for some $p\in\bH[z]$ which is necessarily right coprime 
with $g$ (for otherwise, $hg$ wouldn't be the {\em least} right common multiple of $f$ and $h$).
On the other hand, if (3) is in force, then (1) holds with the same $h$ 
(for otherwise, the polynomial ${\bf lrcm}(f,h)=f\widetilde p=h\widetilde g$ would be a proper left divisor
of $fp=hg$ implying that $p=\widetilde p q$ and $g=\widetilde g q$ for some non-constant $q\in\bH[z]$
contradicting the right coprimeness of $p$ and $g$). Property (3) appears as the definition of polynomial similarity  
in \cite{fit,jacob,cohn} in terms of isomorphic cyclic modules. For the equivalence 
$(3)\Leftrightarrow(4)$, see e.g., \cite[Theorem 4.9]{llo}.

\smallskip

As a consequence of Theorem \ref{P:ex}, we have the following relaxed version of Theorem \ref{T:4.5a} (when similarity is imposed on
state space matrices rather then on input or output pairs).
\begin{proposition}
(1) The minimal polynomials  $\mathfrak P_{A,\bv}$ and $\mathfrak P_{A^\prime,\bv^\prime}$ of controllable pairs 
$(A,\bv)$ and $(A^\prime,\bv^\prime)$ are similar if and only if $A\sim A^\prime$.

(2) The minimal polynomials $\mathfrak P_{\bu,A}$ and $\mathfrak P_{\bu^\prime,A^\prime}$
of observable pairs $(\bu,A)$ and $(\bu^\prime,A^\prime)$ are similar if and only if $A\sim A^\prime$.
\label{R:2.4}
\end{proposition}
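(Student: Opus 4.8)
The plan is to deduce Proposition \ref{R:2.4} by combining Theorem \ref{P:ex} (part (a), giving $A\sim C_{\boldsymbol\ell}(\mathfrak P_{A,\bv})$ for any cyclic vector $\bv$) with the equivalence $(3)\Leftrightarrow(4)$ in Proposition \ref{P:2.2} (namely $f\approx g$ iff $C_{\boldsymbol\ell}(f)\sim C_{\boldsymbol\ell}(g)$). Since a controllable pair embeds $A$ into a cyclic matrix, all the machinery of Section 4 applies.

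First I would prove part (1). Suppose $(A,\bv)$ and $(A^\prime,\bv^\prime)$ are controllable; then $A$ and $A^\prime$ are cyclic with cyclic vectors $\bv$ and $\bv^\prime$. By Theorem \ref{P:ex}(a), $A\sim C_{\boldsymbol\ell}(\mathfrak P_{A,\bv})$ and $A^\prime\sim C_{\boldsymbol\ell}(\mathfrak P_{A^\prime,\bv^\prime})$. Hence $A\sim A^\prime$ if and only if $C_{\boldsymbol\ell}(\mathfrak P_{A,\bv})\sim C_{\boldsymbol\ell}(\mathfrak P_{A^\prime,\bv^\prime})$ (similarity being transitive and symmetric), and by the equivalence $(3)\Leftrightarrow(4)$ of Proposition \ref{P:2.2} this holds if and only if $\mathfrak P_{A,\bv}\approx\mathfrak P_{A^\prime,\bv^\prime}$. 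This chain of equivalences is exactly the assertion of part (1).

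Part (2) is the dual statement and follows by the same argument applied to observable pairs: Theorem \ref{P:ex}(a) also gives $A\sim C_{\bf r}(\mathfrak P_{\bu,A})$, and by \eqref{1.14g} we have $C_{\bf r}(f)\sim C_{\boldsymbol\ell}(f)$ for every monic $f$, so $C_{\bf r}(\mathfrak P_{\bu,A})\sim C_{\bf r}(\mathfrak P_{\bu^\prime,A^\prime})$ is equivalent to $C_{\boldsymbol\ell}(\mathfrak P_{\bu,A})\sim C_{\boldsymbol\ell}(\mathfrak P_{\bu^\prime,A^\prime})$, which by Proposition \ref{P:2.2} is equivalent to $\mathfrak P_{\bu,A}\approx\mathfrak P_{\bu^\prime,A^\prime}$. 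Composing with $A\sim C_{\bf r}(\mathfrak P_{\bu,A})$ and $A^\prime\sim C_{\bf r}(\mathfrak P_{\bu^\prime,A^\prime})$ yields the claim.

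There is no real obstacle here: the statement is essentially a bookkeeping corollary, and the only point requiring a word of care is to invoke the correct direction of $(3)\Leftrightarrow(4)$ in Proposition \ref{P:2.2} and to note that the similarity relation on matrices is an equivalence relation so that the intermediate similarities $A\sim C_{\boldsymbol\ell}(\mathfrak P_{A,\bv})$ can be cancelled freely. I would present the proof in two short paragraphs mirroring the two parts, keeping the observable case brief by pointing to \eqref{1.14g} and the symmetry of the argument.
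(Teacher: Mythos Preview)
Your proposal is correct and follows essentially the same route as the paper: reduce to companion matrices via $A\sim C_{\boldsymbol\ell}(\mathfrak P_{A,\bv})$ (the paper cites Proposition \ref{P:2.15}, you cite Theorem \ref{P:ex}(a), which is the same content), then invoke $C_{\boldsymbol\ell}(f)\sim C_{\boldsymbol\ell}(g)\Leftrightarrow f\approx g$, and handle the observable case by appealing to \eqref{1.14g}. Your citations are in fact slightly sharper than the paper's own one-line justification.
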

\noindent
Indeed, by Proposition \ref{P:2.15}, $A\sim C_{\boldsymbol\ell}(\mathfrak P_{A,\bv})$ and $A^\prime
\sim C_{\boldsymbol\ell}(\mathfrak P_{A^\prime,\bv^\prime})$. By Theorem \ref{P:ex}, we therefore have
$$
A\sim A^\prime \; \Leftrightarrow \, C_{\boldsymbol\ell}(\mathfrak P_{A,\bv})\sim
C_{\boldsymbol\ell}(\mathfrak P_{A^\prime,\bv^\prime}) \, \Leftrightarrow \, \mathfrak P_{A,\bv}\approx\mathfrak P_{A^\prime,\bv^\prime}.
$$
The second statement follows similarly, due to \eqref{1.14g}. 

\smallskip

Upon combining Theorem \ref{P:ex} and Theorem \ref{T:4.5a}, we arrive at the following parametrization of the similarity
class of a given polynomial.
\begin{theorem}
Given a monic $p\in\mathbb F[z]$, the formula
$$
\varphi: \, \bv\mapsto \mathfrak P_{C_{\boldsymbol\ell}(p),\bv}
$$
establishes a map from the set of all cyclic vectors of the companion matrix $C_{\boldsymbol\ell}(p)$ onto
the similarity class of $p$. Moreover, $\varphi(\bv)=\varphi(\bv^\prime)$ if and only if there exists an invertible
$T\in\mathbb F^{n\times n}$ such that 
\begin{equation}
TC_{\boldsymbol\ell}(p)=C_{\boldsymbol\ell}(p)T\quad\mbox{and}\quad \bv^\prime=T\bv.
\label{comm}
\end{equation}
\label{T:2.10u}
\end{theorem}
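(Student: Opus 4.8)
The plan is to verify the three assertions — that $\varphi$ lands in the similarity class of $p$, that it is surjective onto that class, and the fiber description \eqref{comm} — in that order, using the two main tools already established: Theorem \ref{P:ex} (parts (a),(b)) which identifies cyclic vectors of a companion matrix with companion-matrix conjugators, and Theorem \ref{T:4.5a} which says controllable pairs are similar iff their minimal polynomials coincide. First I would observe that for any cyclic vector $\bv$ of $A:=C_{\boldsymbol\ell}(p)$, the pair $(A,\bv)$ is controllable (this is the definition of cyclic vector), so by Proposition \ref{P:2.15}(1), $A\sim C_{\boldsymbol\ell}(\mathfrak P_{A,\bv})=C_{\boldsymbol\ell}(\varphi(\bv))$; since also $A=C_{\boldsymbol\ell}(p)$, we get $C_{\boldsymbol\ell}(p)\sim C_{\boldsymbol\ell}(\varphi(\bv))$, which by the definition of polynomial similarity (equivalently Proposition \ref{P:2.2}, $(1)\Leftrightarrow(4)$) says exactly $p\approx\varphi(\bv)$. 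This shows $\varphi$ maps into the similarity class of $p$, and incidentally that $\deg\varphi(\bv)=\deg p=n$ for every cyclic $\bv$ (consistent with controllability, Remark \ref{R:3:9}).

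Next, for surjectivity, take any $g$ in the similarity class of $p$, i.e.\ $g\approx p$, so $C_{\boldsymbol\ell}(g)\sim C_{\boldsymbol\ell}(p)$; write $C_{\boldsymbol\ell}(p)=T\,C_{\boldsymbol\ell}(g)\,T^{-1}$ for some invertible $T$. Set $\bv:=T{\bf e}_1$. Then $(C_{\boldsymbol\ell}(p),\bv)$ and $(C_{\boldsymbol\ell}(g),{\bf e}_1)$ are similar input pairs (Definition \ref{D:1.1}), the latter is controllable (Example \ref{E:3.2}), hence $\bv$ is a cyclic vector of $C_{\boldsymbol\ell}(p)$ — this is precisely Theorem \ref{P:ex}(b) — and by Remark \ref{R:1.2} together with \eqref{1.14f}, $\varphi(\bv)=\mathfrak P_{C_{\boldsymbol\ell}(p),\bv}=\mathfrak P_{C_{\boldsymbol\ell}(g),{\bf e}_1}=g$. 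So $g$ is in the image.

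Finally, the fiber description. Suppose $\varphi(\bv)=\varphi(\bv')$ for two cyclic vectors $\bv,\bv'$ of $A=C_{\boldsymbol\ell}(p)$; then the controllable pairs $(A,\bv)$ and $(A,\bv')$ have equal minimal polynomials, so by Theorem \ref{T:4.5a} they are similar: there is an invertible $T$ with $A=TAT^{-1}$ and $\bv'=T\bv$. The first relation is $TA=AT$, i.e.\ $TC_{\boldsymbol\ell}(p)=C_{\boldsymbol\ell}(p)T$, which is \eqref{comm}. Conversely, if such a $T$ exists, then $A=TAT^{-1}$ and $\bv'=T\bv$ exhibit $(A,\bv)$ and $(A,\bv')$ as similar input pairs, so $\varphi(\bv')=\mathfrak P_{A,\bv'}=\mathfrak P_{A,\bv}=\varphi(\bv)$ by Remark \ref{R:1.2}. (One should also note $T\bv$ is automatically a cyclic vector when $TA=AT$, since similarity of pairs preserves controllability, as recorded just after \eqref{1.2x}.)

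The argument is essentially a bookkeeping assembly of results already in hand, so there is no deep obstacle; the one place requiring a little care is making sure the dimension matches in the surjectivity step — namely that $C_{\boldsymbol\ell}(g)$ has the same size $n\times n$ as $C_{\boldsymbol\ell}(p)$ — which follows because $g\approx p$ forces $\deg g=\deg p$ (similar polynomials have equal degree, e.g.\ by Proposition \ref{P:2.2}(3): $fp=hg$ with the stated coprimeness and degree count, or simply because similar matrices have equal size). Beyond that, everything reduces to the translation dictionary "cyclic vector $\leftrightarrow$ conjugator of companion matrix" from Theorem \ref{P:ex} and the rigidity from Theorem \ref{T:4.5a}.
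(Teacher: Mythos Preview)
Your proof is correct and follows essentially the same route as the paper's: both establish that $\varphi$ lands in the similarity class via $C_{\boldsymbol\ell}(p)\sim C_{\boldsymbol\ell}(\varphi(\bv))$, prove surjectivity by pulling back ${\bf e}_1$ along a similarity $C_{\boldsymbol\ell}(p)\sim C_{\boldsymbol\ell}(g)$, and identify the fibers with the centralizer orbit via Theorem \ref{T:4.5a}. The only cosmetic difference is that the paper cites Proposition \ref{R:2.4} for the first step where you invoke Proposition \ref{P:2.15} directly, and the paper cites Theorem \ref{T:4.5a} in the surjectivity step where Remark \ref{R:1.2} (its easy direction) already suffices, as you note.
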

\begin{proof}
The vector ${\bf e}_1$ is cyclic for $C_{\boldsymbol\ell}(p)$ and $\mathfrak P_{C_{\boldsymbol\ell}(p),{\bf e}_1}=p$.
By Proposition \ref{R:2.4} (part (1) with $A=A^\prime=C_{\boldsymbol\ell}(p)$), 
$$
\mathfrak P_{C_{\boldsymbol\ell}(p),\bv}\approx\mathfrak P_{C_{\boldsymbol\ell}(p),{\bf e}_1}=p
$$ 
for any cyclic vector ${\bf v}$ of $C_{\boldsymbol\ell}(p)$. Conversely, if $g\approx p$, then 
$C_{\boldsymbol\ell}(p)\sim C_{\boldsymbol\ell}(g)$, i.e., $C_{\boldsymbol\ell}(p)=T^{-1}C_{\boldsymbol\ell}(g)T$
for some invertible $T\in\mathbb F^{n\times n}$. If we let ${\bf v}=T^{-1}{\bf e}_1$, then controllable pairs 
$(C_{\boldsymbol\ell}(p),{\bf v})$ and $(C_{\boldsymbol\ell}(g),{\bf e}_1)$ will be similar and hence,
$$
\mathfrak P_{C_{\boldsymbol\ell}(p),\bv}=\mathfrak P_{C_{\boldsymbol\ell}(g),{\bf e}_1}=g,
$$
by Theorem \ref{T:4.5a}. Therefore, the map $\varphi$ is onto. Again due to Theorem \ref{T:4.5a}, 
$\mathfrak P_{C_{\boldsymbol\ell}(p),\bv}=\mathfrak P_{C_{\boldsymbol\ell}(p),\bv^\prime}$ if and only if the pairs
$(C_{\boldsymbol\ell}(p),{\bf v})$ and $C_{\boldsymbol\ell}(p),{\bf v^\prime}$ are similar, which is equivalent to 
relations \eqref{comm}.
\end{proof}

\subsection{Similarity reduction} A cyclic matrix $A\in\mathbb F^{n\times n}$ does not have to be similar to a two-diagonal matrix.
To address Proposition \ref{P:1.2} (part (4)) in the non-commutative setting, we put it in the following form: {\em if the companion 
matrix $C(f)$ ($f\in\C[z]$) is similar to a matrix $\Gamma$ \eqref{1.1}, then necessarily $f=\bp_{\gamma_1}\cdots\bp_{\gamma_n}$}.
Two noncommutative extensions of the latter statement are given in Propositions \ref{C:3.76} and \ref{T:1.16} below.
\begin{proposition}
Let $f\in\mathbb F[z]$ be a monic polynomial of degree $n$. Let $\Gamma_{\bgamma}$, $A$ and $C_{\boldsymbol\ell}(f_1,\ldots,f_k)$
be $n\times n$ matrices given by \eqref{1.1j}, \eqref{4.9} and \eqref{4.10b}, respectively. Then
\begin{enumerate}
\item $C_{\boldsymbol\ell}(f)\sim\Gamma_{\bgamma}$ if and only if $f\approx \bp_{\gamma_1}\bp_{\gamma_2}\cdots \bp_{\gamma_n}$.
\item $C_{\boldsymbol\ell}(f)\sim A={\rm diag}(\alpha_1,\ldots,\alpha_n)$ if and only if 
$f\approx {\bf lrcm}(\bp_{\alpha_1},\ldots,\bp_{\alpha_n})$.
\item $C_{\boldsymbol\ell}(f)\sim {\rm diag}(C_{\boldsymbol\ell}(f_1),\ldots,
C_{\boldsymbol\ell}(f_k))$ if and only if $f\approx {\bf lrcm}(f_1,\ldots,f_k)$.
\end{enumerate}
\label{C:3.76}
\end{proposition}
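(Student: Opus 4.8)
The plan is to reduce all three parts to the characterization of polynomial similarity in terms of similarity of left companion matrices (Proposition \ref{P:2.2}, equivalence $(1)\Leftrightarrow(4)$), combined with the explicit minimal-polynomial computations already carried out in Propositions \ref{P:3.15}, \ref{P:br4} and \ref{P:2.89}, and with the rigidity statement of Theorem \ref{P:ex}(a,b). The unifying observation is this: each of the three matrices on the right-hand side ($\Gamma_{\bgamma}$, the diagonal $A$, and $C_{\boldsymbol\ell}(f_1,\ldots,f_k)$) is itself cyclic, and we have already identified a distinguished cyclic vector for it together with the corresponding minimal polynomial. Concretely, by Example \ref{E:3.3} the pair $(\Gamma_{\bgamma},{\bf e}_1)$ is controllable with $\mathfrak P_{\Gamma_{\bgamma},{\bf e}_1}=\bp_{\gamma_1}\bp_{\gamma_2}\cdots\bp_{\gamma_n}$ (formula \eqref{1.1k} with $k=1$); similarly $(A,{\bf v})$ with ${\bf v}={\bf e}_1+\cdots+{\bf e}_n$ is controllable with $\mathfrak P_{A,{\bf v}}={\bf lrcm}(\bp_{\alpha_1},\ldots,\bp_{\alpha_n})$ (formula \eqref{4.10a}, valid exactly when the $\alpha_i$ are left $P$-independent, which by Proposition \ref{P:br4} is precisely the controllability hypothesis); and $(C_{\boldsymbol\ell}(f_1,\ldots,f_k),E)$ is controllable with minimal polynomial ${\bf lrcm}(f_1,\ldots,f_k)$ by Proposition \ref{P:2.89}.

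Granting this, each part becomes a two-line argument. For part (1): by Theorem \ref{P:ex}(a), $\Gamma_{\bgamma}\sim C_{\boldsymbol\ell}(\mathfrak P_{\Gamma_{\bgamma},{\bf e}_1})=C_{\boldsymbol\ell}(\bp_{\gamma_1}\cdots\bp_{\gamma_n})$. Hence $C_{\boldsymbol\ell}(f)\sim\Gamma_{\bgamma}$ iff $C_{\boldsymbol\ell}(f)\sim C_{\boldsymbol\ell}(\bp_{\gamma_1}\cdots\bp_{\gamma_n})$, which by Proposition \ref{P:2.2} $(1)\Leftrightarrow(4)$ is equivalent to $f\approx\bp_{\gamma_1}\cdots\bp_{\gamma_n}$. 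Parts (2) and (3) are identical in form: replace $\Gamma_{\bgamma}$ by $A={\rm diag}(\alpha_1,\ldots,\alpha_n)$ and by $C_{\boldsymbol\ell}(f_1,\ldots,f_k)$ respectively, use the corresponding minimal-polynomial formula, and invoke Theorem \ref{P:ex}(a) and Proposition \ref{P:2.2} again. One wrinkle worth spelling out: in part (2) the statement ``$C_{\boldsymbol\ell}(f)\sim A$'' already forces $\deg f=n$ equals the size of $A$, and since a diagonal matrix is cyclic iff it has distinct-enough eigenvalues in the $P$-independence sense, if the $\alpha_i$ are not left $P$-independent then $A$ is not cyclic, hence not similar to any companion matrix, and simultaneously ${\bf lrcm}(\bp_{\alpha_1},\ldots,\bp_{\alpha_n})$ has degree $<n$, so $f\not\approx{\bf lrcm}(\bp_{\alpha_1},\ldots,\bp_{\alpha_n})$ (similar polynomials have equal degree); thus the equivalence holds vacuously on both sides. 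I would insert a sentence handling that degenerate case so the biconditional is literally correct as stated.

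The main obstacle is not any single hard step but making sure the bookkeeping is airtight: one must check that the cyclic vectors quoted from Examples \ref{E:3.2}--\ref{E:3.3} and Proposition \ref{P:2.89} really do yield the claimed minimal polynomials (all already proved in the excerpt), and that Theorem \ref{P:ex}(a) is being applied to the right pair — i.e., that for a cyclic matrix $M$ with cyclic vector ${\bf w}$ one genuinely has $M\sim C_{\boldsymbol\ell}(\mathfrak P_{M,{\bf w}})$, which is Proposition \ref{P:2.15}(1). A secondary point requiring a word of care is that Proposition \ref{P:2.2} is stated for left companion matrices and left coprimeness only, which is exactly the flavor appearing here, so no passage through \eqref{1.14g} is needed. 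I would therefore structure the proof as: (i) recall $M\sim C_{\boldsymbol\ell}(\mathfrak P_{M,{\bf w}})$ for the three matrices $M$ with their distinguished cyclic vectors, citing the relevant propositions; (ii) note $C_{\boldsymbol\ell}(f)\sim C_{\boldsymbol\ell}(g)\iff f\approx g$ from Proposition \ref{P:2.2}; (iii) combine, and dispose of the $P$-dependent degenerate case in (2) separately. No lengthy computation is required since every ingredient is already in hand.

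\begin{proof}
In all three cases the matrix on the right-hand side is cyclic with an explicitly known cyclic vector and minimal polynomial. Indeed, by Example \ref{E:3.3} and formula \eqref{1.1k} with $k=1$, the pair $(\Gamma_{\bgamma},{\bf e}_1)$ is controllable and $\mathfrak P_{\Gamma_{\bgamma},{\bf e}_1}=\bp_{\gamma_1}\bp_{\gamma_2}\cdots\bp_{\gamma_n}$; hence by Proposition \ref{P:2.15}(1),
$$
\Gamma_{\bgamma}\sim C_{\boldsymbol\ell}(\mathfrak P_{\Gamma_{\bgamma},{\bf e}_1})=C_{\boldsymbol\ell}(\bp_{\gamma_1}\bp_{\gamma_2}\cdots\bp_{\gamma_n}).
$$
Likewise, by Proposition \ref{P:2.89} the pair $(C_{\boldsymbol\ell}(f_1,\ldots,f_k),E)$ is controllable with $\mathfrak P_{C_{\boldsymbol\ell}(f_1,\ldots,f_k),E}={\bf lrcm}(f_1,\ldots,f_k)$, so again by Proposition \ref{P:2.15}(1),
$$
C_{\boldsymbol\ell}(f_1,\ldots,f_k)\sim C_{\boldsymbol\ell}({\bf lrcm}(f_1,\ldots,f_k)).
$$
Now recall from Proposition \ref{P:2.2} (equivalence $(1)\Leftrightarrow(4)$) that for monic $g,h\in\mathbb F[z]$ one has $C_{\boldsymbol\ell}(g)\sim C_{\boldsymbol\ell}(h)$ if and only if $g\approx h$. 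Combining the displayed similarities with this criterion:
$$
C_{\boldsymbol\ell}(f)\sim\Gamma_{\bgamma}\iff C_{\boldsymbol\ell}(f)\sim C_{\boldsymbol\ell}(\bp_{\gamma_1}\cdots\bp_{\gamma_n})\iff f\approx\bp_{\gamma_1}\cdots\bp_{\gamma_n},
$$
which is part (1), and
$$
C_{\boldsymbol\ell}(f)\sim C_{\boldsymbol\ell}(f_1,\ldots,f_k)\iff f\approx{\bf lrcm}(f_1,\ldots,f_k),
$$
which is part (3).

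For part (2), first suppose the set $\{\alpha_1,\ldots,\alpha_n\}$ is left $P$-independent. By Proposition \ref{P:br4} the pair $(A,{\bf v})$ with ${\bf v}={\bf e}_1+\cdots+{\bf e}_n$ is controllable, and by \eqref{4.10a}, $\mathfrak P_{A,{\bf v}}={\bf lrcm}(\bp_{\alpha_1},\ldots,\bp_{\alpha_n})$; hence, as above, $A\sim C_{\boldsymbol\ell}({\bf lrcm}(\bp_{\alpha_1},\ldots,\bp_{\alpha_n}))$, and Proposition \ref{P:2.2} gives
$$
C_{\boldsymbol\ell}(f)\sim A\iff f\approx{\bf lrcm}(\bp_{\alpha_1},\ldots,\bp_{\alpha_n}).
$$
If, on the other hand, $\{\alpha_1,\ldots,\alpha_n\}$ is not left $P$-independent, then by Proposition \ref{P:br4} the diagonal matrix $A$ admits no right cyclic vector, so by Theorem \ref{P:ex} it is not similar to any companion matrix; in particular $C_{\boldsymbol\ell}(f)\not\sim A$. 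At the same time, left $P$-dependence means $\bp_{\alpha_1},\ldots,\bp_{\alpha_n}$ are not left coprime, so $\deg({\bf lrcm}(\bp_{\alpha_1},\ldots,\bp_{\alpha_n}))<n=\deg f$; since similar polynomials have equal degree, $f\not\approx{\bf lrcm}(\bp_{\alpha_1},\ldots,\bp_{\alpha_n})$. Thus both sides of the asserted equivalence fail, and part (2) holds in this case as well.
\end{proof}
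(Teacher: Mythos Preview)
Your proof is correct and follows essentially the same route as the paper's: the paper simply packages your combination ``Proposition \ref{P:2.15}(1) $+$ Proposition \ref{P:2.2}'' into a single citation of Proposition \ref{R:2.4}, and then invokes the very same minimal-polynomial formulas \eqref{1.1k}, \eqref{4.10a}, \eqref{4.10c}, \eqref{1.14f}. One small imprecision in your treatment of the degenerate case of (2): Proposition \ref{P:br4} only tells you that the particular vector ${\bf v}={\bf e}_1+\cdots+{\bf e}_n$ fails to be cyclic, not that $A$ has no right cyclic vector at all; to get the latter, use that \eqref{4.10a} (valid without any $P$-independence assumption) gives $\deg\bmu_{A,\boldsymbol\ell}=\deg{\bf lrcm}(\bp_{\alpha_1},\ldots,\bp_{\alpha_n})<n$, and every $\mathfrak P_{A,{\bf w}}$ divides $\bmu_{A,\boldsymbol\ell}$.
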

All three statements are known in the more general setting of skew polynomials \cite[Section 5]{llo}. In the present context, they 
follow from Proposition \ref{R:2.4} and formulas \eqref{1.1k}, \eqref{4.10a}, \eqref{4.10c} and \eqref{1.14f}. 

\smallskip
\noindent
The main point in part (1) is: {\em $C_{\boldsymbol\ell}(f)$ is similar to a two-diagonal matrix of the form \eqref{1.1j} 
if and only if $f$ splits into the product of linear factors}. 
Part (2) says that {\em $C_{\boldsymbol\ell}(f)$ is similar to a diagonal matrix if and only if $f$ is a Wedderburn polynomial}.
Since $\alpha_i=C_{\boldsymbol\ell}(\bp_{\alpha_i})$, part (2) can be interpreted as the extremal particular case of part (3)
when all diagonal blocks in the matrix $C_{\boldsymbol\ell}(f_1,\ldots,f_k)$ are scalars.
The opposite extremal case is the one where $f$ cannot be represented as the {\bf lrcm} of its proper left divisors, or equivalently,
the ideal $\langle f\rangle_{\bf r}$ is irreducible in the sense that it is not contained into two distinct proper right ideals in 
$\mathbb F[z]$. Following Ore \cite{ore} we will call such polynomials {\em indecomposable}. By \cite[Theorem 13, Part II]{ore},
any polynomial $f\in\mathbb F[z]$ admits a representation 
\begin{equation}
f={\bf lrcm}(f_1,\ldots,f_k)
\label{ore}
\end{equation}
where $f_1,\ldots,f_k$ are left coprime indecomposable polynomials, and this representation is unique up to similarity of each component.

\smallskip

We now present a more rigid version of Proposition \ref{C:3.76} dealing with the fixed controllable pair 
$(C_{\boldsymbol\ell}(f),{\bf e}_1)$ rather than the companion matrix itself.
\begin{proposition}
Let $f\in\mathbb F[z]$ be a monic polynomial of degree $n$, let $\Gamma_{\bgamma}$ and the controllable pairs
$(A,{\bv})$, $(C_{\boldsymbol\ell}(f_1,\ldots,f_k),E)$ be defined as in \eqref{1.1j}, \eqref{4.9} 
and \eqref{4.10b}, respectively. Then
\begin{enumerate}
\item $(C_{\boldsymbol\ell}(f),{\bf e}_1)\sim(\Gamma_{\bgamma}, {\bf e}_1)$, i.e.,
there exists an invertible $T$ such that
$$
TC_{\boldsymbol\ell}(f)T^{-1}=\Gamma_{\bgamma}\quad\mbox{and}\quad T{\bf e}_1={\bf e}_1
$$
if and only if $f=\bp_{\gamma_1}\cdots\bp_{\gamma_n}$.
\item $(C_{\boldsymbol\ell}(f),{\bf e}_1)\sim(A,{\bf v})$  if and only if
$f={\bf lrcm}(\bp_{\alpha_1},\ldots,\bp_{\alpha_n})$.
\item $(C_{\boldsymbol\ell}(f),{\bf e}_1)\sim (C_{\boldsymbol\ell}(f_1,\ldots,f_k), E)$ 
if and only if $f={\bf lrcm}(f_1,\ldots,f_k)$.
\end{enumerate}
\label{T:1.16}
\end{proposition}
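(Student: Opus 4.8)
The plan is to exploit the rigidity already captured in Theorem~\ref{T:4.5a}: two controllable pairs are similar if and only if they have \emph{equal} (not merely similar) minimal polynomials. So in each of the three items the left-hand similarity $(C_{\boldsymbol\ell}(f),{\bf e}_1)\sim(\,\cdot\,,\,\cdot\,)$ is, by Theorem~\ref{T:4.5a}, equivalent to the equality of the two minimal polynomials involved. One of these is always $\mathfrak P_{C_{\boldsymbol\ell}(f),{\bf e}_1}=f$ by \eqref{1.14f}. The other is, respectively, $\mathfrak P_{\Gamma_{\bgamma},{\bf e}_1}$, $\mathfrak P_{A,{\bf v}}$, and $\mathfrak P_{C_{\boldsymbol\ell}(f_1,\ldots,f_k),E}$, each of which has already been computed earlier in the paper. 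Thus the proof is essentially a matter of quoting the relevant minimal-polynomial formulas and invoking the rigidity theorem.

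Concretely, for part (3) I would argue: the pair $(C_{\boldsymbol\ell}(f_1,\ldots,f_k),E)$ is controllable with $\mathfrak P_{C_{\boldsymbol\ell}(f_1,\ldots,f_k),E}={\bf lrcm}(f_1,\ldots,f_k)$ by Proposition~\ref{P:2.89} (formula \eqref{4.10c}). By Theorem~\ref{T:4.5a}, $(C_{\boldsymbol\ell}(f),{\bf e}_1)\sim(C_{\boldsymbol\ell}(f_1,\ldots,f_k),E)$ iff $\mathfrak P_{C_{\boldsymbol\ell}(f),{\bf e}_1}=\mathfrak P_{C_{\boldsymbol\ell}(f_1,\ldots,f_k),E}$, i.e. iff $f={\bf lrcm}(f_1,\ldots,f_k)$. (One must note that the left-hand similarity presupposes the pairs live in the same dimension; but when $f={\bf lrcm}(f_1,\ldots,f_k)$ with $f_1,\ldots,f_k$ left coprime, $\deg f=n_1+\cdots+n_k=n$, so the dimensions match, and conversely if the pairs are similar they automatically have the same size.) Part (2) is the special case of part (3) in which each $f_j=\bp_{\alpha_j}$ is linear and $C_{\boldsymbol\ell}(\bp_{\alpha_j})=\alpha_j$, so $C_{\boldsymbol\ell}(f_1,\ldots,f_k)=A={\rm diag}(\alpha_1,\ldots,\alpha_n)$ and $E={\bf v}={\bf e}_1+\cdots+{\bf e}_n$; here $\mathfrak P_{A,{\bf v}}={\bf lrcm}(\bp_{\alpha_1},\ldots,\bp_{\alpha_n})$ by \eqref{4.10a}, and again Theorem~\ref{T:4.5a} finishes it. For part (1), the pair $(\Gamma_{\bgamma},{\bf e}_1)$ is controllable (Example~\ref{E:3.3}) with $\mathfrak P_{\Gamma_{\bgamma},{\bf e}_1}=\bp_{\gamma_1}\bp_{\gamma_2}\cdots\bp_{\gamma_n}$ by Proposition~\ref{P:3.15} (formula \eqref{1.1k} with $k=1$); so $(C_{\boldsymbol\ell}(f),{\bf e}_1)\sim(\Gamma_{\bgamma},{\bf e}_1)$ iff $f=\bp_{\gamma_1}\cdots\bp_{\gamma_n}$.

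The main subtlety — really the only thing beyond bookkeeping — is being careful about what "$\sim$" means for pairs and making sure the dimensions are handled honestly: Definition~\ref{D:1.1} demands the similarity matrix $T$ be square invertible, so similar pairs have the same size, and one should state in each direction that when the claimed polynomial identity holds the ambient dimension is indeed $n$. A secondary point worth a sentence is that the three displayed "if and only if" statements in Proposition~\ref{T:1.16} use genuine \emph{equality} of $f$ with the given product/lrcm, as opposed to the weaker similarity $\approx$ appearing in Proposition~\ref{C:3.76}; this sharpening is exactly the payoff of passing from similarity of companion matrices (governed by $\approx$ via Proposition~\ref{R:2.4}) to similarity of controllable pairs (governed by genuine equality via Theorem~\ref{T:4.5a}).

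I do not expect any real obstacle: all the ingredients — controllability of the three model pairs, their minimal polynomials, and the rigidity theorem — are already in place, so the proof is a short, uniform deduction, with part~(3) doing the work and parts~(1),(2) following as specializations.
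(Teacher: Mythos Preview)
Your proposal is correct and follows exactly the paper's own argument: invoke Theorem~\ref{T:4.5a} to reduce similarity of controllable pairs to equality of minimal polynomials, use $\mathfrak P_{C_{\boldsymbol\ell}(f),{\bf e}_1}=f$ from \eqref{1.14f}, and then read off the other three minimal polynomials from \eqref{1.1k} (with $k=1$), \eqref{4.10a}, and \eqref{4.10c}. Your additional remarks on dimensions and on the contrast with Proposition~\ref{C:3.76} are accurate but go slightly beyond what the paper spells out.
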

\begin{proof} Since the pairs $(C_{\boldsymbol\ell}(f),{\bf e}_1)$, $(\Gamma_{\bgamma},{\bf e}_1)$, $(A,{\bf v})$ and 
$(C_{\boldsymbol\ell}(f_1,\ldots,f_k), E)$ are all controllable, any two of them are similar if and only if their 
minimal polynomials are equal. Since $\mathfrak P_{C_{\boldsymbol\ell}(f),{\bf e}_1}=f$ the statements follow
from the first formula in \eqref{1.1k} (for $k=1$) and formulas \eqref{4.10a} and \eqref{4.10c}. 
\end{proof}
In case $\mathbb F=\mathbb H$, the skew field quaternions, some of the previous results 
can be elaborated a bit further, due to the facts that any non-real (i.e., non-cetral) element in $\mathbb H$ is algebraic of 
degree two and that $\mathbb H$ is algebraically closed on the left and on the right and hence any polynomial $f\in\mathbb H$
splits in $\mathbb H$. In this setting, {\em any} cyclic matrix is similar to a two-diagonal matrix \eqref{1.1}, which is the exact 
analog of part (3) in Proposition \ref{P:1.2}. Part (3) in Proposition \ref{C:3.76} is worked out to get the 
Jordan form of a cyclic matrix $A$ (which necessarily contains one block corresponding to each real
eigenvalue and at most two blocks corresponding to each non-real eigenvalue), while part (3) in Proposition \ref{T:1.16} eventually 
establishes similarity of a controllable pair $(A,{\bf v})$ to the essentially unique pair $(\mathcal J,E)$ where $\mathcal J$
is the block-diagonal matrix with diagonal blocks of the form \eqref{1.1} where this time, all $\gamma_j$'s are similar to each other
and $\gamma_{j+1}\neq \overline{\gamma}_j$ (the quaternion conjugate of $\gamma_j$). We omit details.

\section{Ideal interpolation schemes}
\setcounter{equation}{0}

Characterizations of ideals of $\mathbb F[z]$ given in Proposition \ref{C:4.5} and Remark \ref{C:4.5a} 
in terms of evaluations \eqref{2.6}
based on controllable and observable pairs suggest to take yet another look at interpolation problems
in $\mathbb F[z]$. We start with ideal interpolation schemes that were proposed in \cite{bir} in an attempt to come 
up with meaningful multivariate analogues of the Lagrange-Hermite interpolation problem. 
The single-variable non-commutative version of this concept is the following: given a ring $\mathbb F$,
a finite set $\Phi=\{\phi_i\}_{i=1}^n$ of linearly independent functionals
$\varphi_i: \, \mathbb F[z]\mapsto \mathbb F$ is called a {\em right (left, two-sided) ideal interpolation scheme} 
if $\bigcap_{i=1}^n{\rm Ker} \, \varphi_i$ is a right (left, two-sided) ideal in $\mathbb F[z]$. 
Given an ideal interpolation scheme, the associated interpolation problem consists of finding all
$f\in\mathbb F[z]$ such that $\varphi_i(f)=c_i$ for preassigned $c_j\in\mathbb F$ ($i=1,\ldots,n$). 
Since the problem is linear, the answer for a left (or right) scheme is given by the respective 
formulas (which are the same if the scheme is two-sided)
\begin{equation}
f=f_0+ph\quad\mbox{or}\quad f=f_0+hp, 
\label{kol}
\end{equation}
where $p$ is the generator of the ideal $\bigcap_{i=1}^n{\rm Ker} \, \varphi_i$,
where $f_0$ is a unique particular solution to the problem with $\deg f_0<\deg p$, and where $h\in\mathbb F[z]$ is a free parameter.
The only remaining question is to find an explicit formula for $f_0$ in terms of given $c_1,\ldots,c_n$ and $p$.

\smallskip
	
By Proposition \ref{C:4.5}, any right (left) ideal interpolation scheme in $\mathbb F[z]$ can be embedded into the 
following left (right) interpolation problem with the interpolation condition given in terms of evaluations 
\eqref{2.6}. 

\medskip
\noindent
{\bf Problem} ${\bf LP}(A,\bv,{\bf b})$: {\em given a controllable pair $(A,\bv)$ with $A\in\mathbb F^{n\times n}$, 
and given ${\bf b}\in\mathbb F^{n\times 1}$, find a polynomial $f\in\mathbb F[z]$ such that}
\begin{equation}
(\bv f)^{\bl}(A)={\bf b}.
\label{4.22}
\end{equation}
\noindent
{\bf Problem} ${\bf RP}(B,\bu,{\bf d})$: {\em given an observable pair $(\bu, B)$ with $B\in\mathbb F^{k\times k}$,
and given ${\bf d}\in\mathbb F^{1\times k}$, find a polynomial $f\in\mathbb F[z]$ such that}
\begin{equation}
(f\bu)^{\br}(B)={\bf d}.
\label{4.24}
\end{equation}
The next two results specifying the parametrization formulas \eqref{kol} in terms of
interpolation data can be regarded as left and right noncommutative analogues of Proposition \ref{R:1.0} (part (2)).
\begin{theorem}
The input pair $(A,\bv)$ with $A\in\mathbb F^{n\times n}$ is controllable if and only if the problem ${\bf LP}(A,\bv,{\bf b})$
has a solution for any ${\bf b}\in\mathbb F^{n\times 1}$. In this case, all solutions to the problem  
are parametrized by the formula
\begin{equation}
f={f}_{\bbl}+\mathfrak P_{A,\bv}h,\quad\mbox{with}\quad {f}_{\bbl}(z)=\mathfrak A_{n}(z)
\mathfrak C_{A,\bv}^{-1}{\bf b},\quad h\in\mathbb F[z],
\label{4.22b}
\end{equation}
where $\mathfrak A_{n}$ is defined in \eqref{fraka}, $\mathfrak P_{A,\bv}$ is the minimal polynomial of the pair $(A,\bv)$, $f_{\bbl}$ 
is the low-degree solution, and $h$ is a free parameter.
\label{R:1.0a}
\end{theorem}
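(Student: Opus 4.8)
The plan is to study the additive map $\Phi\colon\mathbb F[z]\to\mathbb F^{n\times 1}$, $\Phi(f)=(\bv f)^{\bl}(A)=\sum_{j\ge 0}A^j\bv f_j$. Solvability of ${\bf LP}(A,\bv,{\bf b})$ for every ${\bf b}$ is precisely surjectivity of $\Phi$, and $\ker\Phi=\mathbb I_{A,{\bf v}}=\langle\mathfrak P_{A,{\bf v}}\rangle_{\bf r}$ by \eqref{2.7}. The only computation I need is that a polynomial $g(z)=\mathfrak A_n(z){\bf g}$ of degree at most $n-1$ satisfies $\Phi(g)=\mathfrak C_{A,\bv}{\bf g}$, which is immediate from \eqref{2.6} and the definition \eqref{1.2x} of $\mathfrak C_{A,\bv}$.

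First I would settle the equivalence for an arbitrary pair $(A,\bv)$. Put $d=\deg\mathfrak P_{A,\bv}$. Given $f\in\mathbb F[z]$, divide it by the monic polynomial $\mathfrak P_{A,\bv}$ on the left as in \eqref{2.15}, $f=\mathfrak P_{A,\bv}h+r$ with $\deg r<d$; since $\mathfrak P_{A,\bv}h\in\langle\mathfrak P_{A,{\bf v}}\rangle_{\bf r}=\ker\Phi$, additivity gives $\Phi(f)=\Phi(r)=\sum_{j=0}^{d-1}A^j\bv r_j$. Hence the image of $\Phi$ is the right linear span of $\bv,A\bv,\dots,A^{d-1}\bv$, and by the minimality of $d$ in \eqref{2.10} (equivalently, Remark \ref{R:3:9}) these $d$ vectors are right linearly independent, so the image has right dimension exactly $d\le n$. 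Therefore $\Phi$ is onto $\mathbb F^{n\times 1}$ if and only if $d=n$, i.e.\ if and only if $(A,\bv)$ is controllable. I expect the only real, and minor, obstacle here to be the bookkeeping with the one-sided module structure (the scalars $r_j$ sitting to the right of $A^j\bv$); this is precisely why the division-based argument, rather than a crude dimension count on $\mathbb F[z]$, is the natural route.

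It remains to describe the solution set when $(A,\bv)$ is controllable. Then $\mathfrak C_{A,\bv}$ is invertible, and $f_{\bbl}(z)=\mathfrak A_n(z)\mathfrak C_{A,\bv}^{-1}{\bf b}$ has coefficient column $\mathfrak C_{A,\bv}^{-1}{\bf b}$, whence $\Phi(f_{\bbl})=\mathfrak C_{A,\bv}\mathfrak C_{A,\bv}^{-1}{\bf b}={\bf b}$ and $\deg f_{\bbl}<n=\deg\mathfrak P_{A,\bv}$. Adding an arbitrary element of $\ker\Phi=\langle\mathfrak P_{A,{\bf v}}\rangle_{\bf r}=\{\mathfrak P_{A,\bv}h:h\in\mathbb F[z]\}$ then produces exactly the asserted parametrization $f=f_{\bbl}+\mathfrak P_{A,\bv}h$; moreover $f_{\bbl}$ is the unique solution of degree less than $n$, since the difference of two such solutions would lie in $\langle\mathfrak P_{A,{\bf v}}\rangle_{\bf r}$ while having degree $<\deg\mathfrak P_{A,\bv}$, forcing it to vanish.
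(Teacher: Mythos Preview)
Your proof is correct and follows essentially the same approach as the paper. Both arguments rest on the computation $(\bv g)^{\bl}(A)=\mathfrak C_{A,\bv}\,{\bf g}$ for $g=\mathfrak A_n{\bf g}$, use left division by $\mathfrak P_{A,\bv}$ to reduce to low-degree representatives, and then read off both the equivalence (surjectivity $\Leftrightarrow$ invertibility of $\mathfrak C_{A,\bv}$) and the parametrization from $\ker\Phi=\langle\mathfrak P_{A,\bv}\rangle_{\bf r}$; your explicit identification of the image as a $d$-dimensional right subspace is a slightly more careful version of the paper's one-line appeal to solvability of $\mathfrak C_{A,\bv}F={\bf b}$.
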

\begin{proof}
by the division algorithm, the problem ${\bf LP}(A,\bv,{\bf b})$ has a solution if and only if it has
a low-degree one. To find a polynomial $f_{\bbl}$ with $\deg f<n$  and subject to condition \eqref{4.22}, we may take it in the form
\begin{equation}
f_{\bbl}(z)=\sum_{j=0}^{n-1}f_jz^j=\mathfrak A_{n}(z)F, \qquad F=\sbm{f_0\\ \vdots \\ f_{n-1}},
\label{4.23}
\end{equation}
and then compute, upon making use of \eqref{2.6} and \eqref{1.2x},
$$
(\bv f_{\bbl})^{\bl}(A)={\bf v}f_0+A{\bf v}f_1+\ldots+A^{n-1}{\bf v}f_{n-1}={\mathfrak C}_{A,\bv}F.
$$
Thus the equation $(\bv f)^{\bl}(A)=\mathfrak C_{A,\bv}F={\bf b}$ has a solution
$F$ for any ${\bf b}\in\mathbb F$ if and only if the controllability matrix $\mathfrak C_{A,\bv}$ is invertible, i.e., 
the pair $(A,\bv)$ is controllable. In this case, $f_{\bbl}$ satisfies condition \eqref{4.22} if and only if 
$F={\mathfrak C}_{A,\bv}^{-1}{\bf b}$ which being substituted into \eqref{4.23}, gives \eqref{4.22b}.
Since the solution set of the homogeneous problem ${\bf LP}(A,\bv,0)$ is the right ideal $\langle \mathfrak P_{A,{\bf v}}\rangle_{\bf r}$,
(by Proposition \ref{C:4.5}), the parametrization formula \eqref{4.22b} follows.
\end{proof}
\noindent
The right-sided version of Theorem \ref{R:1.0a} presented below is justified similarly.
\begin{theorem}
The output pair $(\bu,B)$ with $B\in\mathbb F^{k\times k}$ is observable if and only if the problem ${\bf RP}(B,\bu,{\bf d})$
has a solution for any ${\bf d}\in\mathbb F^{1\times k}$. In this case, all solutions to the problem 
are given by the formula
\begin{equation}
f={f}_{\bbr}+h\mathfrak P_{\bu,B},\quad\mbox{with}\quad {f}_{\bbr}(z)={\bf d}\mathfrak O_{\bu,B}^{-1}\mathfrak A_{k}(z)^\top,
\quad h\in\mathbb F[z],
\label{4.22c}
\end{equation}
where $\mathfrak P_{\bu,A}$ is the minimal polynomial of the pair $(\bu,A)$, $f_{\bbr}$ is the low-degree solution, 
and $h$ is a free parameter.
\label{R:1.0c}
\end{theorem}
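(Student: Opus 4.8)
The plan is to mirror the proof of Theorem \ref{R:1.0a} exactly, transposing every left-sided statement to its right-sided counterpart. First I would recall that, by the division algorithm on the right in $\mathbb F[z]$, the problem ${\bf RP}(B,\bu,{\bf d})$ has a solution if and only if it has a solution $f_{\bbr}$ of degree less than $k=\deg\mathfrak P_{\bu,B}$. So I would look for such a low-degree solution in the form $f_{\bbr}(z)=\sum_{j=0}^{k-1}f_jz^j=\mathfrak A_{k}(z)^\top$-paired with a coefficient row, i.e.\ write $f_{\bbr}(z)=D\,\mathfrak A_k(z)^\top$ where $D=[f_0\ \cdots\ f_{k-1}]$ (note the coefficients sit on the \emph{right} here since we evaluate on the right), and then compute $(f_{\bbr}\bu)^{\br}(B)$ using \eqref{2.6}.

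The key computation is that, by \eqref{2.6}, $(f_{\bbr}\bu)^{\br}(B)=\sum_{j=0}^{k-1} f_j\,\bu B^j = D\,\mathfrak O_{\bu,B}$, where $\mathfrak O_{\bu,B}$ is the observability matrix from \eqref{1.2u}. Hence the equation $(f_{\bbr}\bu)^{\br}(B)={\bf d}$ becomes $D\,\mathfrak O_{\bu,B}={\bf d}$, which is solvable in $D$ for \emph{every} ${\bf d}\in\mathbb F^{1\times k}$ precisely when $\mathfrak O_{\bu,B}$ is invertible, i.e.\ when the pair $(\bu,B)$ is observable. In that case the unique low-degree solution is obtained by taking $D={\bf d}\,\mathfrak O_{\bu,B}^{-1}$, so that $f_{\bbr}(z)={\bf d}\,\mathfrak O_{\bu,B}^{-1}\,\mathfrak A_k(z)^\top$, which is exactly the formula claimed in \eqref{4.22c}.

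Finally, to get the full parametrization I would invoke Proposition \ref{C:4.5}(2): the solution set of the homogeneous problem ${\bf RP}(B,\bu,0)$ is the left ideal $\mathbb I_{\bu,B}=\langle\mathfrak P_{\bu,B}\rangle_{\boldsymbol\ell}$, i.e.\ $\{h\mathfrak P_{\bu,B}:h\in\mathbb F[z]\}$. Adding this to the particular solution $f_{\bbr}$ and using linearity of the evaluation $f\mapsto(f\bu)^{\br}(B)$ yields $f=f_{\bbr}+h\mathfrak P_{\bu,B}$ with $h\in\mathbb F[z]$ arbitrary, completing the proof. I do not anticipate a genuine obstacle: the only point requiring care is bookkeeping about which side the polynomial coefficients fall on when one evaluates on the right, so that the identity $(f_{\bbr}\bu)^{\br}(B)=D\,\mathfrak O_{\bu,B}$ comes out with the factors in the correct order; everything else is a routine transcription of the argument for Theorem \ref{R:1.0a}, as the statement itself already indicates.
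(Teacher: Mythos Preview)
Your proposal is correct and is exactly the argument the paper has in mind: the paper does not write out a separate proof but simply states that Theorem \ref{R:1.0c} ``is justified similarly'' to Theorem \ref{R:1.0a}, and your transcription---reducing to a low-degree solution, computing $(f_{\bbr}\bu)^{\br}(B)=D\,\mathfrak O_{\bu,B}$, and then adding the left ideal $\langle\mathfrak P_{\bu,B}\rangle_{\boldsymbol\ell}$---is precisely that similar justification. The only quibble is your parenthetical ``the coefficients sit on the right here'': in $D\,\mathfrak A_k(z)^\top$ the coefficient row $D$ is on the \emph{left}, but since $z$ commutes with $\mathbb F$ this is harmless and the computation $D\,\mathfrak O_{\bu,B}=\sum_j f_j\bu B^j$ goes through exactly as you wrote.
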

\begin{remark}
{\rm The problem \eqref{4.22} can be efficiently solved for {\em any} (not necessarily controllable) 
input pair $(A,{\bf v})$ as follows. Given a pair $(A,{\bf v})$,
we find the smallest integer $d$ such that the vectors ${\bf v},A{\bf v},\ldots,A^d{\bf v}$ are (right) linearly dependent and then 
construct the minimal polynomial $\mathfrak P_{A,\bv}$ (of degree $d$) as suggested in \eqref{2.10}. The problem \eqref{4.22}
has a solution if and only if the column ${\bf b}$ belongs to the right range space of $\mathfrak C_{A,\bv}$ (i.e.,  
to the right linear span of ${\bf v},A{\bf v},\ldots,A^{d-1}{\bf v}$), the {\em controllability space} of the pair 
$(A,{\bf v})$. If this is the case, we represent ${\bf b}$ as 
$$
{\bf b}=\sum_{j=0}^{d-1} A^j{\bf v}b_j,\quad\mbox{and let}\quad f_\ell(z)=\sum_{j=0}^{d-1}b_jz^j.
$$ 
It is readily seen that all polynomials $f$ subject to the interpolation condition \eqref{4.22} are parametrized by the
formula \eqref{4.22b}.}
\label{R:1.0d}
\end{remark}
In conclusion we briefly address the first statement in Proposition \ref{R:1.0}. In the complex setting,
$f(A)$ commutes with $A$, so the ``only if" part is immediate.  
As complex polynomials respect similarity, the matrix $A$ can be taken
in the canonical Jordan form, and then the commutativity relation $AB=BA$ forces $B$ to be of triangular
block Toeplitz structure. Then the Hermite-Lagrange polynomial with prescribed values
(determined by $B$ at eigenvalues of $A$ (with multiplicities) satisfies $f(A)=B$. 

\smallskip

In contrary to this case, polynomials over $\mathbb F$ do not respect similarity and besides, $f(A)$ does not have to commute with $A$.
The solvability of the interpolation problem $f^{\bl}(A)=B$ for every $B\in\mathbb F^{n\times n}$ does not seem to have much to do
with cyclicity of $A$. However, this problem falls in the left ideal interpolation scheme and its particular solution can be 
found recursively as follows. Letting $B=\begin{bmatrix}{\bf b}_1&{\bf b}_2 &\ldots & {\bf b}_n\end{bmatrix}$ we recall that ${\bf e}_j\in Z_{\mathbb F}^n$
and write the interpolation 
condition $f^{\bl}(A)=B$ equivalently as 
\begin{equation}
\big({\bf e}_jf\big)^{\bl}(A)={\bf b}_j\quad\mbox{for}\quad j=1,\ldots,n.
\label{4.22k}
\end{equation}
Applying the procedure from Remark \ref{R:1.0c}, we either conclude that the first condition in \eqref{4.22k} is inconsistent (and hence the 
problem has no solutions) or we get all polynomials subject to this conditions in the form
\begin{equation}
f=f_{{\boldsymbol\ell}}+\mathfrak P_{A,{\bf e}_1}h,\qquad h\in\mathbb F[z].
\label{4.22m}
\end{equation}
Making use of \eqref{2.6ups}, we see that a polynomial $f$ of the form \eqref{4.22m} satisfies conditions \eqref{4.22} for $j=2,\ldots,n$ if and only if
$$
{\bf b}_j=\big({\bf e}_jf\big)^{\bl}(A)=\big({\bf e}_jf_{{\boldsymbol\ell}}\big)^{\bl}(A)+\big(\big({\bf e}_j\mathfrak P_{A,{\bf e}_1}\big)^{\bl}(A))h\big)^{\bl}(A)
$$
for $j=2,\ldots,n$, which can be written in terms of the parameter $h$ as 
\begin{equation}
\big({\bf c}_jh\big)^{\bl}(A)={\bf d}_j\quad\mbox{for}\quad j=2,\ldots,n,
\label{4.22p}
\end{equation}
where 
${\bf c}_j=\big({\bf e}_j\mathfrak P_{A,{\bf e}_1}\big)^{\bl}(A)$ and  ${\bf d}_j={\bf b}_j-\big({\bf e}_jf_{{\boldsymbol\ell}}\big)^{\bl}(A)$. Thus,
either the problem \eqref{4.22k} is inconsistent or it reduces (via \eqref{4.22m} to a similar problem \eqref{4.22p} with fewer conditions.
Continuing this reduction, we either conclude that the original problem \eqref{4.22k} is inconsistent or will come up (in $n$ steps) with a parametrization 
of all its solutions.

\section{Two-sided interpolation}
\setcounter{equation}{0}

Our next goal is to consider the problem which arises by combining left and right ideal interpolation schemes. 
We will call this problem {\em two-sided} as it contains both left and right interpolation conditions.

\medskip
\noindent
{\bf Problem} ${\bf TSP}$: {\em Given a controllable pair $(A,\bv)$ and an observable pair $(\bu, B)$ (with $A\in\mathbb F^{n\times n}$,
$B\in\mathbb F^{k\times k}$), along with the 
target vectors ${\bf b}$ and ${\bf d}$, find a polynomial $f\in\mathbb F[z]$ subject to conditions
\begin{equation}
(\bv f)^{\bl}(A)={\bf b},\quad (f\bu)^{\br}(B)={\bf d},\quad \deg f<n+k.
\label{4.25y}
\end{equation}}
By Theorems \ref{R:1.0a} and \ref{R:1.0c}, the latter problem can be identified with the following one: 
{\em given polynomials $p, q, {f}_{\bbl}, {f}_{\bbr}\in\mathbb F[z]$ with $\deg {f}_{\bbl}<\deg p$ and $\deg {f}_{\bbl}<\deg q$,
find an $f\in\mathbb F[z]$ such that $\deg f<\deg p+\deg q$ and
\begin{equation}
f-f_{\bbl}\in\langle p\rangle_{\bbr} \quad\mbox{and}\quad f-f_{\bbr}\in\langle q\rangle_{\bbl}.
\label{4.25}
\end{equation}}
Indeed, if we let 
\begin{equation}
p=\mathfrak P_{A,\bv}\quad\mbox{and}\quad q=\mathfrak P_{\bu,B}
\label{po14}
\end{equation} 
be minimal polynomials of the pairs $(A,\bv)$ and $(\bu,B)$, and then let 
$$
f_{\bbl}=\mathfrak A_{n}\mathfrak C_{A,\bv}^{-1}{\bf b}\quad\mbox{and}\quad 
f_{\bbr}={\bf d}\mathfrak O_{A,\bv}^{-1}\mathfrak A_{k}^\top, 
$$
then conditions \eqref{4.25} turn out to be identical to parametrization formulas 
\eqref{4.22b}, \eqref{4.22c}, and hence, they are equivalent to conditions \eqref{4.22} and \eqref{4.24}. 
\begin{remark}
{\rm The degree constraint in \eqref{4.25y} is not restrictive. By the left and right division algorithms, 
any polynomial $f\in\mathbb F[z]$ can be uniquely represented as
\begin{equation}
f=\widetilde{f}+phq\quad \mbox{for some} \; \;  \widetilde{f},h\in\mathbb F[z], \; \; \deg\widetilde{f}<\deg p+\deg q,
\label{rep1}
\end{equation}
and furthermore, $f$ satisfies conditions \eqref{4.25} if and only if $\widetilde{f}$ does.}
\label{R:4.3}
\end{remark}
Theorem \ref{T:4.10} below states that the problem \eqref{4.25y} has a solution if 
and only if the Sylvester equation $AY-YB={\bf b}\bu-\bv {\bf d}$ has a solution $Y\in\mathbb F^{n\times k}$.
The ``only if" part of this criterion is verified in the next section via certain ``two-sided" evaluation calculus.

\subsection{Two-sided evaluation} Left and right evaluations \eqref{2.6} based on input
and output pairs evaluate a scalar polynomial $f\in\mathbb F[z]$
at these pairs, rather at matrices (as in \eqref{2.2}). In formula \eqref{4.26} below, we introduce a map
$\mathbb F[z]\to \mathbb F^{n\times k}$ that evaluates $f$ at the couple $\{(A,\bv), (\bu,B)\}$ consisting of an
input pair $(A,\bv)$ and an output pair $(\bu,B)$.
 
\smallskip

Let us extend the backward-shift operators  $L_A$ and $R_B$ defined via formulas \eqref{2.9} on vector polynomials of special form
(polynomial multiples of a constant vector) to matrix polynomials of the form
$$
F=\bv f\bu, \quad\mbox{where}\quad\bv\in\mathbb F^{n\times 1}, \;  \bu\in\mathbb F^{1\times k}, \; f(z)=\sum f_jz^j\in\mathbb F[z],
$$
by the formulas 
\begin{align}
L_A(\bv f\bu)&:=L_A(\bv f)\cdot \bu=\sum_{j+k=0}^{\deg f-1}A^j \bv f_{k+j+1}\bu z^k,\label{2.6n}\\
R_B (\bv f\bu)&:=\bv \cdot R_B (f\bu)=\sum_{j+k=0}^{\deg f-1}\bv f_{k+j+1}\bu B^jz^k.\notag
\end{align}
Evaluating the top formula at $B$ on the right and the bottom formula at $A$ on the left we get the same outcomes which we will refer to as 
the {\em two-sided evaluation} of $f$ at the couple $\{(A,\bv), (\bu,B)\}$:
\begin{align}
(\bv f\bu)^{\bbm}(A,B)&:=\sum_{i+j=0}^{\deg f-1} A^i\bv f_{i+j+1}\bu B^j\label{4.26}\\
&=(L_A (\bu f\bv))^{\br}(B)=(R_B (\bu f\bv))^{\bl}(A).\notag
\label{4.26}
\end{align}
\begin{proposition}
For any $f\in\mathbb F[z]$, the evaluations \eqref{2.6} and \eqref{4.26} satisfy the Sylvester equation
\begin{equation}
A\cdot (\bv f\bu)^{\bbm}(A,B)-(\bv f\bu)^{\bbm}(A,B)\cdot B
=(\bv f)^{\bl}(A)\cdot\bu-\bv\cdot (f\bu)^{\br}(B).
\label{4.27}
\end{equation}
{\rm Indeed, by making use of polynomial expressions in \eqref{2.6} and \eqref{4.27} we get
\begin{align*}
&A\cdot (\bv f\bu)^{\bbm}(A,B)-(\bv f\bu)^{\bbm}(A,B)\cdot B\\
&=\sum_{i+j=0}^{{\rm deg f}-1} A^{i+1}\bv f_{i+j+1}\bu B^j-\sum_{i+j=0}^{{\rm deg f}-1} A^i\bv f_{i+j+1}\bu B^{j+1}\notag\\
&=\sum_{i=1}^{{\rm deg f}}A^{i}\bv f_i\bu -\sum_{j=1}^{{\rm deg f}} \bv f_{j}\bu B^j=
(\bv f)^{\bl}(A)\cdot\bu-\bv\cdot (f\bu)^{\br}(B).
\end{align*}}
\label{R:klj}
\end{proposition}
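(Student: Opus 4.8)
The plan is to verify the Sylvester identity \eqref{4.27} by a direct computation with the explicit polynomial expressions, which is essentially the telescoping argument sketched in the remark accompanying Proposition \ref{R:klj}. First I would write out $(\bv f\bu)^{\bbm}(A,B)=\sum_{i+j=0}^{\deg f-1}A^i\bv f_{i+j+1}\bu B^j$ from \eqref{4.26}, multiply by $A$ on the left and by $B$ on the right, and reindex: in the first sum replace $i$ by $i-1$ so that the summation runs over $A^i\bv f_{i+j}\bu B^j$ with $i\ge 1$, and in the second sum replace $j$ by $j-1$ so that it runs over $A^i\bv f_{i+j}\bu B^j$ with $j\ge 1$.

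The key step is then to observe that the two reindexed double sums differ only in their ``boundary'' terms. On the overlapping index set (both $i\ge1$ and $j\ge1$) the summands $A^i\bv f_{i+j}\bu B^j$ cancel identically. What survives from $A\cdot(\bv f\bu)^{\bbm}(A,B)$ is the part with $j=0$, namely $\sum_{i=1}^{\deg f}A^i\bv f_i\bu$, and what survives (with a minus sign) from $(\bv f\bu)^{\bbm}(A,B)\cdot B$ is the part with $i=0$, namely $\sum_{j=1}^{\deg f}\bv f_j\bu B^j$. Here one must be slightly careful about the range of the collapsed index: because the original double sum has $i+j$ running up to $\deg f-1$, after shifting the new index $i$ (resp.\ $j$) runs up to $\deg f$, which is exactly why the terms $A^{\deg f}\bv f_{\deg f}\bu$ and $\bv f_{\deg f}\bu B^{\deg f}$ appear. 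Matching these two surviving sums with the definitions $(\bv f)^{\bl}(A)=\sum_j A^j\bv f_j$ and $(f\bu)^{\br}(B)=\sum_j f_j\bu B^j$ from \eqref{2.6} — noting that the constant term $f_0$ contributes nothing to either surviving sum — yields precisely $(\bv f)^{\bl}(A)\cdot\bu-\bv\cdot(f\bu)^{\br}(B)$, which is the right-hand side of \eqref{4.27}.

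Finally I would remark that the two alternative descriptions of $(\bv f\bu)^{\bbm}(A,B)$ in \eqref{4.26} as $(L_A(\bv f\bu))^{\br}(B)$ and $(R_B(\bv f\bu))^{\bl}(A)$ are consistent — each is obtained by reading the double sum $\sum_{i+j=0}^{\deg f-1}A^i\bv f_{i+j+1}\bu B^j$ as iterated: grouping the powers of $A$ first gives $L_A(\bv f\bu)$ evaluated on the right at $B$ via \eqref{2.6n}, and grouping the powers of $B$ first gives $R_B(\bv f\bu)$ evaluated on the left at $A$; so the identity \eqref{4.27} does not depend on which interpretation is chosen.

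The computation is entirely routine, so there is no real obstacle; the only point requiring attention is bookkeeping of the summation bounds after the index shifts, to make sure the top-degree boundary terms are accounted for correctly and no off-by-one error creeps in. Since the identity is purely formal (it holds for arbitrary $A\in\mathbb F^{n\times n}$, $B\in\mathbb F^{k\times k}$, $\bv$, $\bu$, and $f$, with no controllability or observability hypothesis), no structural input beyond the definitions \eqref{2.6}, \eqref{2.6n} and \eqref{4.26} is needed.
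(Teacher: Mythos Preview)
Your proposal is correct and follows essentially the same approach as the paper: a direct computation using the explicit double-sum definition \eqref{4.26}, multiplying by $A$ and $B$, reindexing, and identifying the surviving boundary terms with $(\bv f)^{\bl}(A)\cdot\bu-\bv\cdot(f\bu)^{\br}(B)$. The paper's argument is exactly this telescoping calculation, and your additional remarks on the summation bounds and the vanishing $f_0$ contribution simply make explicit what the paper leaves implicit.
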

\begin{corollary}
If a polynomial $f$ satisfies conditions \eqref{4.25y}, then the matrix $Y=(\bv f\bu)^{\bbm}(A,B)$ solves the 
Sylvester equation $AY-YB={\bf b}\bu-\bv {\bf d}$.
\label{C:5.2}
\end{corollary}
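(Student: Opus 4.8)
The plan is to apply Proposition \ref{R:klj} to the specific polynomial $f$ that solves Problem ${\bf TSP}$, and then substitute the interpolation conditions \eqref{4.25y}. Concretely, I would set $Y := (\bv f\bu)^{\bbm}(A,B)$, which is a well-defined element of $\mathbb F^{n\times k}$ by \eqref{4.26}. Proposition \ref{R:klj} (equation \eqref{4.27}) then says, unconditionally, that
$$
AY - YB = (\bv f)^{\bl}(A)\cdot\bu - \bv\cdot(f\bu)^{\br}(B).
$$
Now I would invoke the hypothesis that $f$ satisfies \eqref{4.25y}: the left interpolation condition gives $(\bv f)^{\bl}(A) = {\bf b}$, and the right interpolation condition gives $(f\bu)^{\br}(B) = {\bf d}$. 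Substituting these two equalities into the right-hand side immediately yields $AY - YB = {\bf b}\bu - \bv{\bf d}$, which is exactly the assertion that $Y$ solves the stated Sylvester equation.

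There is essentially no obstacle here; the corollary is a direct specialization of Proposition \ref{R:klj}. The only things worth checking are bookkeeping points: that the two-sided evaluation $(\bv f\bu)^{\bbm}(A,B)$ lands in the correct matrix size $n\times k$ (it does, since $\bv\in\mathbb F^{n\times 1}$, $\bu\in\mathbb F^{1\times k}$, and $A^i$, $B^j$ preserve those dimensions), and that the degree constraint $\deg f < n+k$ in \eqref{4.25y} plays no role in this particular deduction — it is relevant for uniqueness/normalization of the solution $f$, not for the validity of the Sylvester identity, which holds for any $f\in\mathbb F[z]$. So the proof is a one-line citation of \eqref{4.27} followed by the substitution ${\bf b} = (\bv f)^{\bl}(A)$, ${\bf d} = (f\bu)^{\br}(B)$.

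If a slightly more self-contained argument were wanted, I would instead expand $Y = (\bv f\bu)^{\bbm}(A,B) = \sum_{i+j=0}^{\deg f - 1} A^i \bv f_{i+j+1}\bu B^j$ directly from \eqref{4.26}, compute $AY - YB$ by shifting indices (the telescoping computation already displayed in the remark following Proposition \ref{R:klj}), and observe that the surviving boundary terms reassemble into $\sum_{i\ge 1} A^i\bv f_i\bu = (\bv f)^{\bl}(A)\bu$ and $\sum_{j\ge 1}\bv f_j\bu B^j = \bv(f\bu)^{\br}(B)$; then apply the interpolation conditions as before. But since Proposition \ref{R:klj} is stated and proved just above, the clean route is simply to cite it.
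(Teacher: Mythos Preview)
Your proposal is correct and is exactly the paper's approach: the corollary is stated immediately after Proposition~\ref{R:klj} with no separate proof, precisely because it follows by substituting the interpolation conditions $(\bv f)^{\bl}(A)={\bf b}$ and $(f\bu)^{\br}(B)={\bf d}$ from \eqref{4.25y} into the identity \eqref{4.27}. Your observation that the degree constraint in \eqref{4.25y} is irrelevant here is also accurate.
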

For a concrete example, we will compute the two-sided evaluation at $((C_{\boldsymbol\ell}(p), {\bf e}_{1,n}),
({\bf e}_{1,k},C_{\bf r}(q)))$ where $C_{\boldsymbol\ell}(p)$ and $C_{\bf r}(q)$ are companion matrices of given polynomials
$p,q\in\mathbb F[z]$ with $\deg p=n$, $\deg q=k$. We let for short,
\begin{equation}
\Upsilon^{f}=\left[\Upsilon_{ij}\right]:=({\bf e}_{1,n}f {\bf e}_{1,k}^\top)^{\bbm}(C_{\boldsymbol\ell}(p),C_{\bf r}(q))
\label{3.4akl}
\end{equation}
and denote by $\Upsilon_j$ and $\widetilde \Upsilon_i$ the $j$-th column and the $i$-th row of the matrix $\Upsilon^f$.
Upon specifying two last formulas in \eqref{4.26} to the present setting and combining Propositions \ref{P:nul} and \ref{R:nur}
we see that 
\begin{equation}
\Upsilon_j=\big({\bf e}_{1,n}h_j\big)^{\bl}(C_{\boldsymbol\ell}(p))\quad\mbox{and}\quad
\widetilde \Upsilon_i=\big(g_i{\bf e}_{1,k}^\top\big)^{\br}(C_{\bf r}(q)),
\label{3.4akb}
\end{equation}
where the polynomials $g_i$ and $h_j$ are defined in \eqref{2.12x} and \eqref{2.12xx}, respectively.
\begin{remark}
$\Upsilon^{f}=0$ if and only if $f=\alpha+phq$ for some $h\in\mathbb F[z]$
and $\alpha\in\mathbb F$.
\label{R:twos}
\end{remark}
\begin{proof}
Since $q$ is the minimal polynomial of the pair $({\bf e}_{1,k},C_{\bf r}(q))$, it follows from \eqref{3.4akb} that
$\widetilde \Upsilon_i=0$ if and only if $g_i$ is a left multiple of $q$.
Given $f=\alpha+phq$, equalities \eqref{2.15} hold with $b(z)=b_0=\alpha$ and $g=hq$. Hence, $g_i=(R_0^{j}p)\cdot g$ is a left multiple of $q$. 
Therefore, $\widetilde \Upsilon_i=0$ for $i=1,\ldots,k$ and hence, $\Upsilon^f=0$. Conversely, if $\Upsilon^f=0$, then $g_j=r_j q$
for $j=1,\ldots,n$. In particular, $g_n=g=r_n q$, and now it follows from \eqref{2.12x} that $R_0^j b=0$ for $j=1,\ldots,n-1$.
Since $\deg b<n$, the latter equalities imply that $b=b_0$. Then \eqref{2.15} takes the form $f=pr_nq+b_0$ as desired.
\end{proof}
Since any $f\in\mathbb F[z]$ can be represented as in \eqref{rep1} and then necessarily $\Upsilon^{f}=\Upsilon^{\widetilde{f}}$, by
Remark \ref{R:twos}, it follows that it suffices to compute $\Upsilon^{f}$ for $f$ with $\deg f<n+k$.
Any such polynomial can be represented as
\begin{equation}
f=pg+b=hq+d
\label{lkj}
\end{equation}
with $b,h$ of degree less than $n$ and $d,g$ of degree less than $k$, i.e.,
\begin{equation}
\begin{array}{ll}
b(z)=b_0+\ldots+b_{n-1}z^{n-1},\quad &g(z)=g_0+\ldots+g_{k-1}z^{k-1},\\[2mm]
d(z)=d_0+\ldots+d_{k-1}z^{k-1},\quad &h(z)=h_0+\ldots+h_{n-1}z^{n-1}.
\end{array}\label{lkj1}
\end{equation}
Below, we compute the matrix $\Upsilon^f$ in terms of polynomials \eqref{lkj1}.  
\begin{lemma}
If $f$ is of the form \eqref{lkj}, \eqref{lkj1}, then the columns $\Upsilon_j$  and the rows $\widetilde \Upsilon_i$
of the matrix \eqref{3.4akl} are given by
\begin{align}
\Upsilon_k&=\begin{bmatrix}h_0\\ \vdots \\h_{n-1}\end{bmatrix},\quad \widetilde \Upsilon_n=\begin{bmatrix} g_0 & \ldots & g_{k-1}\end{bmatrix},
\label{lkj1a}\\
\Upsilon_j&=\sum_{i=0}^{k-j}C_{\boldsymbol\ell}(p)^i\Upsilon_kq_{j+i}+\sum_{i=0}^{k-j-1}C_{\boldsymbol\ell}(p)^i{\bf e}_{1}d_{j+i}\quad
(j=1,\ldots,k-1),\label{lkj2}\\
\widetilde \Upsilon_i&=\sum_{j=0}^{n-i}p_{i+j}\Upsilon_nC_{\bf r}(q)^j+\sum_{j=0}^{n-i-1}b_{i+j}{\bf e}_1^\top 
C_{\bf r}(q)^j\quad(i=1,\ldots,n-1).\notag
\end{align}
\label{L:syl}
\end{lemma}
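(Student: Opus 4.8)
The plan is to obtain both formulas from the two representations in \eqref{3.4akb}, which express the $j$-th column of $\Upsilon^f$ as $\Upsilon_j=({\bf e}_{1,n}h_j)^{\bl}(C_{\boldsymbol\ell}(p))$ and its $i$-th row as $\widetilde\Upsilon_i=(g_i{\bf e}_{1,k}^\top)^{\br}(C_{\bf r}(q))$, where $g_i$ and $h_j$ are the auxiliary polynomials from \eqref{2.12x} and \eqref{2.12xx}. I will treat the columns; the rows are the exact mirror image, with left division by $p$, left evaluations and $C_{\boldsymbol\ell}$ replaced by right division by $q$, right evaluations and $C_{\bf r}$.

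I would first settle the base cases \eqref{lkj1a}. By Propositions \ref{P:nul} and \ref{R:nur} the left and right quotients of $f$ are $g=g_n$ and $h=h_k$, and the bound $\deg f<n+k$ forces $\deg g<k$, $\deg h<n$, so these coincide with the polynomials in \eqref{lkj1}. Since $h_k=h$ and, by \eqref{1.14m}, $C_{\boldsymbol\ell}(p)^{\ell}{\bf e}_1={\bf e}_{\ell+1}$ for $0\le\ell\le n-1$, evaluating termwise yields $\Upsilon_k=({\bf e}_{1,n}h)^{\bl}(C_{\boldsymbol\ell}(p))=\sbm{h_0\\ \vdots\\ h_{n-1}}$; the transposed identity ${\bf e}_1^\top C_{\bf r}(q)^{\ell}={\bf e}_{\ell+1}^\top$ gives $\widetilde\Upsilon_n=\begin{bmatrix}g_0&\cdots&g_{k-1}\end{bmatrix}$.

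Next I would record a one-step recursion. The same computation as in the proof of Proposition \ref{P:nul} shows that the polynomials \eqref{2.12x} satisfy $g_j=zg_{j+1}+p_j\,g+b_j$ for $j=1,\ldots,n-1$, and its right-sided counterpart shows that the polynomials \eqref{2.12xx} satisfy $h_j=zh_{j+1}+h\,q_j+d_j$ for $j=1,\ldots,k-1$. Applying $({\bf e}_{1,n}\,\cdot\,)^{\bl}(C_{\boldsymbol\ell}(p))$ to the latter relation, and using that (i) an extra factor $z$ becomes a left factor $C_{\boldsymbol\ell}(p)$, (ii) by \eqref{2.6ups}, $({\bf e}_{1,n}h\,q_j)^{\bl}(C_{\boldsymbol\ell}(p))=\bigl(({\bf e}_{1,n}h)^{\bl}(C_{\boldsymbol\ell}(p))\,q_j\bigr)^{\bl}(C_{\boldsymbol\ell}(p))=\Upsilon_k q_j$, and (iii) a constant vector is its own left evaluation, I get $\Upsilon_j=C_{\boldsymbol\ell}(p)\Upsilon_{j+1}+\Upsilon_k q_j+{\bf e}_1 d_j$ for $j=1,\ldots,k-1$, and dually $\widetilde\Upsilon_i=\widetilde\Upsilon_{i+1}C_{\bf r}(q)+p_i\widetilde\Upsilon_n+b_i{\bf e}_1^\top$ for $i=1,\ldots,n-1$. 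A descending induction on $j$ (respectively on $i$), based at \eqref{lkj1a} and absorbing the top term via $q_k=1$ (respectively $p_n=1$), then delivers \eqref{lkj2}; here the symbol $\Upsilon_n$ occurring in \eqref{lkj2} is to be read as $\widetilde\Upsilon_n$.

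No genuine difficulty is expected: once \eqref{3.4akb} is available, this is a short exercise in the evaluation calculus of Section 2. The only care needed is bookkeeping --- $h_j$ in \eqref{3.4akb}/\eqref{2.12xx} is a polynomial, whereas $h_0,\ldots,h_{n-1}$ in \eqref{lkj1} are the coefficients of the single polynomial $h=h_k$ (and likewise for $g$), and one must keep the two sides straight, since left division by $p$ drives the rows while right division by $q$ drives the columns, so that scalar factors land on the correct side of the noncommutative ring $\mathbb F[z]$. Equivalently, the two one-step recursions are simply the columns and the rows of the Sylvester identity \eqref{4.27} applied to $\Upsilon^f$, whose right-hand side reduces, by \eqref{1.14f}, to $\sbm{b_0\\ \vdots\\ b_{n-1}}{\bf e}_1^\top-{\bf e}_1\begin{bmatrix}d_0&\cdots&d_{k-1}\end{bmatrix}$.
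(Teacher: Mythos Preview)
Your proposal is correct and follows essentially the same approach as the paper: both start from \eqref{3.4akb}, invoke Propositions \ref{P:nul} and \ref{R:nur} for the base cases \eqref{lkj1a}, and then use the evaluation calculus of Section~2 (in particular the multiplicative rule \eqref{2.6ups}) to obtain \eqref{lkj2}. The only cosmetic difference is that the paper plugs the closed form $h_j=h\cdot(R_0^{j}q)+R_0^{j}d$ directly into $({\bf e}_{1}\,\cdot\,)^{\bl}(C_{\boldsymbol\ell}(p))$ and reads off \eqref{lkj2} in one line, whereas you derive the one-step recursion $\Upsilon_j=C_{\boldsymbol\ell}(p)\Upsilon_{j+1}+\Upsilon_kq_j+{\bf e}_1d_j$ and close by descending induction; your observation that this recursion is exactly the column form of the Sylvester identity \eqref{4.27} is a nice complement, and your flagging of the typo $\Upsilon_n\mapsto\widetilde\Upsilon_n$ in the row formula is correct.
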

\begin{proof}
By \eqref{3.4akb} and \eqref{2.12xx} (for $j=k$) , 
\begin{equation}
\Upsilon_k=\big({\bf e}_{1}h\big)^{\bl}(C_{\boldsymbol\ell}(p)),
\label{lkj4}
\end{equation}
and since $\deg h<\deg p$, the latter column consists of the coefficients of $h$, by Proposition \ref{P:nul}, which verifies the 
first formula in \eqref{lkj1a}. We next compute 
\begin{align*} 
 \Upsilon_j&=\big({\bf e}_{1}h_j\big)^{\bl}(C_{\boldsymbol\ell}(p))\\
&=\big({\bf e}_{1}\big(h\cdot (R_0^j q)+R_0^j d\big)\big)^{\bl}(C_{\boldsymbol\ell}(p))\\
&=\big({\bf e}_{1}\big(h\cdot (R_0^jg\big)\big)^{\bl}(C_{\boldsymbol\ell}(p))+\big({\bf e}_{1}(R_0^j d)\big)^{\bl}(C_{\boldsymbol\ell}(p))\\
&=\big(({\bf e}_{1}h)^{\bl}(C_{\boldsymbol\ell}(p))\cdot R_0^jg\big)^{\bl}(C_{\boldsymbol\ell}(p))
+\big({\bf e}_{1}(R_0^j d)\big)^{\bl}(C_{\boldsymbol\ell}(p))\\
&=\big(\Upsilon_k\cdot R_0^jg\big)^{\bl}(C_{\boldsymbol\ell}(p))+\big({\bf e}_{1}(R_0^j d)\big)^{\bl}(C_{\boldsymbol\ell}(p)),
\end{align*}
where we used \eqref{3.4akb} and \eqref{2.12xx} for the two first steps, the additivity of evaluation operators and the 
multiplicative property \eqref{2.6ups} (applied to $h$ and $R_0^jg$) for the next two step, and finally  we used \eqref{lkj4}
for the last step. The expression on the right side is the same as in \eqref{lkj2}, by definition \eqref{2.6} of left evaluation. Equalities for 
the rows $\widetilde \Upsilon_i$ are verified similarly.
\end{proof}
\subsection{Sylvester equations} We now consider the Sylvester equation
\begin{equation}
C_{\boldsymbol\ell}(p)X-XC_{\bf r}(q)={\bf b} {\bf e}^\top_{1,k}-{\bf e}_{1,n}{\bf d}
\label{lkj1b}
\end{equation}
with given ${\bf b}=\left[b_0 \; \ldots \; b_{n-1}\right]^\top$ and ${\bf d}=\left[d_0 \; \ldots \; d_{k-1}\right]$ and unknown 
$X\in\mathbb F^{n\times k}$, along with two associated ``generalized" Sylvester equations
\begin{align}
\big({\bf x} q\big)^{\bl}(C_{\boldsymbol\ell}(p))&:=
\sum_{i=0}^{k}C_{\boldsymbol\ell}(p)^i{\bf x}q_{i}={\bf b}-\sum_{i=0}^{k-1}C_{\boldsymbol\ell}(p)^i{\bf e}_{1,n}d_{i},\label{pr1}\\
\big(p\widetilde{\bf x}\big)^{\br}(C_{\bf r}(q))&:=\sum_{j=0}^{n}p_{j}\widetilde{\bf x}C_{\bf r}(q)^j={\bf d}-
\sum_{j=0}^{n-1}b_{j}{\bf e}_1^\top C_{\bf r}(q)^j\label{pr2}
\end{align}
with unknowns ${\bf x}\in\mathbb F^{n\times n}$ and $\widetilde{\bf x}\in\mathbb F^{1\times k}$. The next result establishes one-to-one 
correspondences between four sets: solution sets of equations \eqref{lkj1b}, \eqref{pr1}, \eqref{pr2}, and the set of all polynomials $f$
such that 
\begin{equation}
({\bf e}_{1,n}f)^{\bl}(C_{\boldsymbol\ell}(p))={\bf b},\quad (f{\bf e}_{1,k}^\top)^{\br}(C_{\bf r}(q))={\bf d}, \quad
\Upsilon^f=X,\quad \deg f<n+k,
\label{pr5}
\end{equation}
where $\Upsilon^f$ is defined in \eqref{3.4akl}. 
\begin{theorem}
Let $X_j$ and $\widetilde{X}_i$ denote the $j$-th column and the $i$-th row of a matrix $X\in\mathbb F^{n\times k}$. Then

\smallskip

{\rm (1)} If $f\in\mathbb F[z]$ satisfies conditions \eqref{pr5}, then $X=\Upsilon^f$ is a solution to the equation \eqref{lkj1b}.

\smallskip

{\rm (2)} If $X$ solves the equation \eqref{lkj1b}, then
\begin{itemize}
\item[(a)] ${\bf x}=X_k$ solves the equation \eqref{pr1};
\item[(b)] $\widetilde{\bf x}=\widetilde{X}_n$ solves the equation \eqref{pr2};
\item[(c)] The formulas (we recall \eqref{fraka})
\begin{align}
f(z)&={\bf d}\mathfrak A_{k}(z)^\top+\mathfrak A_{n}(z)X{\bf e}_{k,k}q(z)\notag\\
&=\mathfrak A_{n}(z){\bf b}+p(z){\bf e}_{n,n}^\top X\mathfrak A_{k}(z)^\top\label{pr12}
\end{align}
define a unique $f\in\mathbb F[z]$ subject to conditions \eqref{pr5}.
\end{itemize}

{\rm (3)} If ${\bf x}\in\mathbb F^{n\times 1}$ solves the equation \eqref{pr1}, then the formula 
\begin{equation}
f(z)={\bf d}\mathfrak A_{k}(z)^\top+\mathfrak A_{n}(z){\bf x}q(z)
\label{pr10}
\end{equation}
defines a unique $f\in\mathbb F[z]$ subject to conditions \eqref{pr5}, whereas
the columns $X_k={\bf x}$ and
\begin{equation}
X_j=\sum_{i=0}^{k-j}C_{\boldsymbol\ell}(p)^i{\bf x}q_{j+i}+\sum_{i=0}^{k-j-1}C_{\boldsymbol\ell}(p)^i{\bf e}_{1,n}d_{j+i} \quad (1\le j\le k-1),
\label{pr3}
\end{equation}
define the only solution $X\in\mathbb F^{n\times k}$ to equation \eqref{lkj1b} with $X_k={\bf x}$.

\smallskip

{\rm (4)} If $\widetilde{\bf x}$ solves the equation \eqref{pr2}, then the formula 
\begin{equation}
f(z)=\mathfrak A_{n}(z){\bf b}+p(z)\widetilde{\bf x}\mathfrak A_{k}(z)^\top
\label{pr11}
\end{equation}
defines a unique $f\in\mathbb F[z]$ subject to conditions \eqref{pr5}, whereas
the rows $\widetilde X_n=\widetilde{\bf x}$ and
$$
\widetilde{X}_i=\sum_{j=0}^{n-i}p_{i+j}\widetilde{\bf x}C_{\bf r}(q)^j+\sum_{j=0}^{n-i-1}b_{i+j}{\bf e}_1^\top
C_{\bf r}(q)^j\quad(1\le i\le n-1)
$$
define the only solution $X\in\mathbb F^{n\times k}$ to the equation \eqref{lkj1b} with $\widetilde{X}_n=\widetilde{\bf x}$.
\label{L:l1}
\end{theorem}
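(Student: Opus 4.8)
The plan is to route the whole theorem through one computation: extracting from the Sylvester equation \eqref{lkj1b} a column recursion that determines all of $X$ from its last column $X_k$ while forcing $X_k$ to satisfy \eqref{pr1} (and, dually, a row recursion for $\widetilde{X}_n$ and \eqref{pr2}). Part (1) requires nothing new: apply Proposition \ref{R:klj} (equivalently Corollary \ref{C:5.2}) with $A=C_{\boldsymbol\ell}(p)$, $B=C_{\bf r}(q)$, $\bv={\bf e}_{1,n}$, $\bu={\bf e}_{1,k}^\top$; under the conditions $({\bf e}_{1,n}f)^{\bl}(C_{\boldsymbol\ell}(p))={\bf b}$ and $(f{\bf e}_{1,k}^\top)^{\br}(C_{\bf r}(q))={\bf d}$, identity \eqref{4.27} becomes exactly \eqref{lkj1b} with $X=\Upsilon^f$.

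For the core step I would read off the $j$-th column of \eqref{lkj1b}. Using $C_{\bf r}(q){\bf e}_{j,k}={\bf e}_{j-1,k}-{\bf e}_{k,k}q_{j-1}$ together with $q_k=1$, the $j$-th column equation takes the form
\[
C_{\boldsymbol\ell}(p)X_j-X_{j-1}+X_kq_{j-1}=\delta_{j,1}{\bf b}-{\bf e}_{1,n}d_{j-1},\qquad j=1,\ldots,k,
\]
with the convention $X_0:=0$. For $j=2,\ldots,k$ this is the recursion $X_{j-1}=C_{\boldsymbol\ell}(p)X_j+X_kq_{j-1}+{\bf e}_{1,n}d_{j-1}$, whose solution unrolled from $X_k$ is precisely \eqref{pr3}; for $j=1$, after substituting \eqref{pr3} for $X_1$ and collecting powers of $C_{\boldsymbol\ell}(p)$, it becomes exactly \eqref{pr1} with ${\bf x}=X_k$. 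This proves (2)(a), the clause of (3) that $X_k$ determines $X$ via \eqref{pr3}, and---running the recursion in reverse, i.e.\ taking ${\bf x}$ solving \eqref{pr1}, setting $X_k={\bf x}$ and defining $X_1,\ldots,X_{k-1}$ by \eqref{pr3}, so that all $k$ column equations hold---that \eqref{pr3} yields a genuine solution of \eqref{lkj1b}. Statements (2)(b) and the matrix part of (4) follow by transposing \eqref{lkj1b}: since $z$ is central and $f$ is scalar, the transpose is again an equation of the form \eqref{lkj1b}, with $p,q$ interchanged, ${\bf b},{\bf d}$ replaced by ${\bf d}^\top,{\bf b}^\top$, and the left and right conditions swapped.

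It remains to produce, in each case, the polynomial of \eqref{pr5}. Take ${\bf x}$ solving \eqref{pr1} and let $f$ be given by \eqref{pr10}; then $\deg f<n+k$ and $f=hq+d$ with $h=\mathfrak A_{n}{\bf x}$ and $d={\bf d}\mathfrak A_{k}^\top$, so Lemma \ref{L:syl} gives $\Upsilon_k={\bf x}$ and the columns $\Upsilon_j$ through the formula in \eqref{lkj2}, which coincides with \eqref{pr3}; hence $\Upsilon^f=X$. For the left condition, additivity and the multiplicative rule \eqref{2.6ups} give
\[
({\bf e}_{1,n}f)^{\bl}(C_{\boldsymbol\ell}(p))=({\bf e}_{1,n}d)^{\bl}(C_{\boldsymbol\ell}(p))+\big(({\bf e}_{1,n}h)^{\bl}(C_{\boldsymbol\ell}(p))\,q\big)^{\bl}(C_{\boldsymbol\ell}(p)),
\]
and since $\deg h<n$ we have $({\bf e}_{1,n}h)^{\bl}(C_{\boldsymbol\ell}(p))={\bf x}$ (by Proposition \ref{P:nul}, or directly from \eqref{1.14m}); so the right-hand side equals ${\bf b}$ by \eqref{pr1}. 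Feeding $\Upsilon^f=X$ and $({\bf e}_{1,n}f)^{\bl}(C_{\boldsymbol\ell}(p))={\bf b}$ into \eqref{4.27} reduces it to ${\bf e}_{1,n}\big((f{\bf e}_{1,k}^\top)^{\br}(C_{\bf r}(q))-{\bf d}\big)=0$, whence $(f{\bf e}_{1,k}^\top)^{\br}(C_{\bf r}(q))={\bf d}$; thus $f$ satisfies \eqref{pr5}.

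Uniqueness is the one place Remark \ref{R:twos} enters: if $f,f'$ both satisfy \eqref{pr5}, then $\Upsilon^{f-f'}=0$, so $f-f'=\alpha+pwq$ for some $\alpha\in\mathbb F$ and $w\in\mathbb F[z]$; the bound $\deg(f-f')<n+k=\deg p+\deg q$ forces $w=0$, and then $({\bf e}_{1,n}(f-f'))^{\bl}(C_{\boldsymbol\ell}(p))={\bf e}_{1,n}\alpha=0$ gives $\alpha=0$. Part (4) is the transpose of part (3), with \eqref{pr11} replacing \eqref{pr10}; and part (2)(c) then follows---given $X$ solving \eqref{lkj1b}, combine (2)(a) with part (3) for ${\bf x}=X_k$ and (2)(b) with part (4) for $\widetilde{\bf x}=\widetilde{X}_n$ to get two polynomials, each satisfying \eqref{pr5}, hence equal by uniqueness, which is exactly the asserted equality of the two expressions in \eqref{pr12}. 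I expect the main obstacle to be the index bookkeeping in the column and row recursions---matching the unrolled recursions with the closed forms \eqref{pr3} and \eqref{pr12}---which is routine but error-prone; the only genuinely non-mechanical point is the use of \eqref{2.6ups} to evaluate the left condition on \eqref{pr10}.
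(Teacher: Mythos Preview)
Your proposal is correct and follows essentially the same route as the paper: read off the column equations from \eqref{lkj1b} to obtain the recursion \eqref{pr3} and the compatibility condition \eqref{pr1}, build $f$ as in \eqref{pr10}, and identify $\Upsilon^f$ with $X$ via Lemma \ref{L:syl}. The only cosmetic differences are that the paper gets the right evaluation condition for $f$ directly from Proposition \ref{R:nur} (since $f=hq+d$ by construction) rather than via the Sylvester identity \eqref{4.27}, and it argues uniqueness by noting that the last two conditions in \eqref{pr5} pin down the quotient $h$ and remainder $d$ in $f=hq+d$, rather than by invoking Remark \ref{R:twos}; both of your alternatives are valid.
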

\begin{proof} Part (1) follows from Corollary \ref{C:5.2} specialized to the present setting.
Making use of the explicit formula 
$$
C_{\bf r}(q)=\begin{bmatrix} 0 & 1 & 0 & \ldots & 0\\
0 & 0 & 1 &\ldots& 0\\
\vdots & \vdots &\vdots & \ddots & \vdots \\
0 & 0& 0 &\ldots &1 \\
-q_0&-q_1&-q_2&\ldots &-q_{k-1} \end{bmatrix},
$$
we equate the corresponding columns in \eqref{lkj1b}:
\begin{align}
C_{\boldsymbol\ell}(p)X_1+X_kq_0&={\bf b}-{\bf e}_{1,n}d_0,\label{lop1}\\
C_{\boldsymbol\ell}(p)X_j-X_{j-1}+X_kq_{j-1}&=-{\bf e}_{1,n}d_{j-1}\quad \mbox{for}\quad j=2,\ldots,k.\label{lop2}
\end{align}
From \eqref{lop2} we recursively recover $X_{k-1}, X_{k-1},\ldots, X_1$ from $X_k$ arriving at formulas \eqref{pr3}
which are the same as \eqref{lkj2}. Substituting the formula \eqref{pr3} (for $j=1$) into \eqref{lop1}, 
and moving all terms not containing $X_k$ to the right side, we get the equality
\begin{equation}
\sum_{i=0}^{k}C_{\boldsymbol\ell}(p)^iX_kq_{i}={\bf b}-\sum_{i=0}^{k-1}C_{\boldsymbol\ell}(p)^i{\bf e}_{1,n}d_{i},
\label{pr6}
\end{equation}
which means that ${\bf x}=X_k$ is a solution to the equation \eqref{pr1}. As is easily seen, the system of equalities
\eqref{pr6}, \eqref{pr3} is equivalent to the system \eqref{lop1}, \eqref{lop2} (i.e., to the Sylvester equality \eqref{lkj1b}).
Therefore, with the fixed column $X_k={\bf x}$ subject to \eqref{pr6}, the only way to extend it to a solution $X$ to the equation
\eqref{lkj1b}, is to use recursive formulas \eqref{pr3}. This completes the proof part (2a) and the second half of part (3). 

\smallskip

To prove the first half, take any ${\bf x}\in\mathbb F^{n\times 1}$ subject to \eqref{pr1} and extend it to the matrix 
$X$ subject to \eqref{lkj1b}
using formulas \eqref{pr3}. Since the formulas \eqref{pr3} are the same as in \eqref{lkj2}, the matrix $X$ equals to the matrix
$\Upsilon^f$ corresponding to the polynomial
\begin{equation}
f=hq+d,\quad\mbox{where}\quad h(z)=\mathfrak A_{n}(z)\cdot {\bf x}\quad\mbox{and}\quad d(z)={\bf d}\cdot\mathfrak A_{k}(z)^\top,
\label{pr7}
\end{equation}
which has been announced in \eqref{pr10}. Thus, this $f$ satisfies the third condition in \eqref{pr5}, and it
is follows from \eqref{pr7} (by Proposition \ref{R:nur}) that it also satisfies the second one, 
and that $\deg f\le \deg h+\deg q<n+k$.
The two last conditions in \eqref{pr5} fix the quotient $h$ and the remainder $d$ of $f$ (of degree less than $n+k$)
when divided by $q$ on the right and hence, determine $f$ uniquely. To show that $f$ of the form \eqref{pr7} also satisfies the first
condition in \eqref{pr5}, let us observe that ${\bf x}$ can be interpreted as the left value
$$
{\bf x}=({\bf e}_{1,n}h)^{\bl}(C_{\boldsymbol\ell}(p)).
$$
Making use of the rule \eqref{ups}, we now can write \eqref{pr1} as
$$
\big({\bf x}q\big)^{\bl}(C_{\boldsymbol\ell}(p))=
\big({\bf e}_{1,n}hq\big)^{\bl}(C_{\boldsymbol\ell}(p))={\bf b}-({\bf e}_{1,n}d)^{\bl}(C_{\boldsymbol\ell}(p)),
$$
which in turn, is equivalent (due to \eqref{pr7}) to
\begin{align*}
{\bf b}&=\big({\bf e}_{1,n}hq\big)^{\bl}(C_{\boldsymbol\ell}(p))+({\bf e}_{1,n}d)^{\bl}(C_{\boldsymbol\ell}(p))\\
&=({\bf e}_{1,n}(hq+d))^{\bl}(C_{\boldsymbol\ell}(p))=({\bf e}_{1,n}f)^{\bl}(C_{\boldsymbol\ell}(p)),
\end{align*}
which completes the proof of part (3). Parts (4) and (2b) are verified similarly. It remains to confirm the part (2a).
To this end, take any $X\in\mathbb F^{n\times k}$ subject to \eqref{lkj1b} and observe that  
\begin{equation}
{\bf x}:=X_k=X{\bf e}_{n,n}\quad\mbox{and}\quad \widetilde{\bf x}:=\widetilde{X}_n={\bf e}_{k,k}X 
\label{pr13}
\end{equation}
solve the respective equations \eqref{pr1} and \eqref{pr2} (by parts (1a) and (1b)) and hence the formulas \eqref{pr10} and \eqref{pr11}
define a unique (and therefore, the same) polynomial $f$ subject to conditions \eqref{pr5}. The formulas in \eqref{pr12} 
follow from \eqref{pr10}, \eqref{pr11} and \eqref{pr13}.
\end{proof}
\subsection{Quasi-ideals in $\mathbb F[z]$} An additive subgroup $\mQ$ of an associative ring $\mathcal A$ such that 
$\mQ\mathcal A\cap\mathcal A\mQ\subseteq \mQ$ (called a {\em quasi-ideal} in \cite{stein2}) amounts, in the setting of 
$\mathbb F[z]$, to the intersection of a left and a right ideal 
$$
\mQ_{p,q}:=\langle p\rangle_{\bf r}\cap\langle q\rangle_{\boldsymbol\ell}
$$
generated by two given polynomials. Any element $f\in \mQ_{p,q}$ is characterized by factorizations $f=pg=hq$ or by homogeneous 
interpolation conditions $({\bf e}_{1,n}f)^{\bl}(C_{\boldsymbol\ell}(p))=0$ and $(f{\bf e}_{1,k}^\top)^{\br}(C_{\bf r}(q))=0$.
By letting ${\bf b}=0$ and ${\bf d}=0$ throughout Section 6.2 we arrive at the following result.
\begin{proposition}
Given polynomials $p,q\in\mathbb F[z]$, the formula
$$
X\mapsto \mathfrak A_{n}(z)X{\bf e}_{k,k}q(z)=p(z){\bf e}_{n,n}^\top X\mathfrak A_{k}(z)^\top
$$
establishes the one-to-one correspondence between the matrices $X\in\mathbb F^{n\times k}$ such that $C_{\boldsymbol\ell}(p)X=XC_{\bf r}(q)$
and the polynomials $f\in\mQ_{p,q}$ of degree $\deg f<\deg p+\deg q$.
\label{R:quasi}
\end{proposition}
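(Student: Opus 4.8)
The plan is to read off Proposition~\ref{R:quasi} as the homogeneous specialization of Theorem~\ref{L:l1}, obtained by setting ${\bf b}=0$ and ${\bf d}=0$ throughout Section~6.2. First I would record what the relevant objects become under this specialization. The Sylvester equation \eqref{lkj1b} turns into $C_{\boldsymbol\ell}(p)X=XC_{\bf r}(q)$. The first two conditions in \eqref{pr5}, namely $({\bf e}_{1,n}f)^{\bl}(C_{\boldsymbol\ell}(p))=0$ and $(f{\bf e}_{1,k}^\top)^{\br}(C_{\bf r}(q))=0$, become membership conditions: by \eqref{1.14f} the polynomial $p$ is the minimal polynomial of the input pair $(C_{\boldsymbol\ell}(p),{\bf e}_{1,n})$ and $q$ is the minimal polynomial of the output pair $({\bf e}_{1,k}^\top,C_{\bf r}(q))$, so by the description \eqref{2.7} of the ideals $\mathbb{I}_{C_{\boldsymbol\ell}(p),{\bf e}_{1,n}}$ and $\mathbb{I}_{{\bf e}_{1,k}^\top,C_{\bf r}(q)}$ these two vanishing conditions are equivalent to $f\in\langle p\rangle_{\bf r}$ and $f\in\langle q\rangle_{\boldsymbol\ell}$, respectively; together they say $f\in\mQ_{p,q}$. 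Finally, formula \eqref{pr12} with ${\bf b}={\bf d}=0$ collapses to the displayed formula $\mathfrak{A}_n(z)X{\bf e}_{k,k}q(z)=p(z){\bf e}_{n,n}^\top X\mathfrak{A}_k(z)^\top$ (the ${\bf d}\,\mathfrak{A}_k(z)^\top$ and $\mathfrak{A}_n(z){\bf b}$ summands vanish), and its two lines already assert that the two sides of the displayed formula agree, so the map is well defined on the level of formulas.

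Next I would extract the correspondence from the two halves of Theorem~\ref{L:l1} that link equation \eqref{lkj1b} with polynomials obeying \eqref{pr5}. Part~(1) gives, for any $f$ satisfying \eqref{pr5}, that $X=\Upsilon^f$ solves the (now homogeneous) Sylvester equation. Part~(2)(c) gives, for any $X$ with $C_{\boldsymbol\ell}(p)X=XC_{\bf r}(q)$, that the displayed formula defines a unique polynomial $f$ satisfying all four conditions of \eqref{pr5}; in particular this $f$ has degree $<n+k$, lies in $\mQ_{p,q}$ (by the first paragraph), and satisfies $\Upsilon^f=X$. Thus the displayed formula sends $X$ to a genuine element of $\{f\in\mQ_{p,q}:\deg f<n+k\}$.

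It remains to check bijectivity, which is now formal. Injectivity: if two matrices $X_1,X_2$ with $C_{\boldsymbol\ell}(p)X_i=X_iC_{\bf r}(q)$ produce the same $f$, then $X_1=\Upsilon^f=X_2$ since each $X_i$ is recovered as $\Upsilon^f$ by Part~(2)(c). Surjectivity: given $f\in\mQ_{p,q}$ with $\deg f<n+k$, put $X:=\Upsilon^f$; since $f$ satisfies the first two conditions of \eqref{pr5} (membership in $\mQ_{p,q}$), the third ($\Upsilon^f=X$ by definition of $X$), and the fourth ($\deg f<n+k$), Part~(1) shows $X$ solves $C_{\boldsymbol\ell}(p)X=XC_{\bf r}(q)$; applying Part~(2)(c) to this $X$ returns a polynomial $f'$ satisfying \eqref{pr5} with the same data, and the uniqueness clause of Part~(2)(c) forces $f'=f$, so $f$ is the image of $X$. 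I do not expect a real obstacle here: all the substantive computation is already contained in Lemma~\ref{L:syl} and Theorem~\ref{L:l1}, and the only point requiring attention is the bookkeeping that translates the homogeneous vanishing conditions into membership in $\mQ_{p,q}$ and matches the degree-$<n+k$ normalization with the uniqueness clause — both immediate from \eqref{1.14f}, \eqref{2.7}, and (for the picture with arbitrary $f$) Remark~\ref{R:4.3}.
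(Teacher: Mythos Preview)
Your proposal is correct and follows exactly the approach the paper indicates: the paper states the proposition as the outcome of ``letting ${\bf b}=0$ and ${\bf d}=0$ throughout Section 6.2,'' and you have simply spelled out how that specialization of Theorem~\ref{L:l1} yields the bijection, including the translation of the vanishing conditions into membership in $\mQ_{p,q}$ via \eqref{1.14f} and \eqref{2.7}.
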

By Remark \ref{R:4.3}, any $f\in \mQ_{p,q}$ can be uniquely represented as in \eqref{rep1} with $\widetilde{f}\in \mQ_{p,q}$; hence 
the description of the whole $\mQ_{p,q}$ follows from Proposition \ref{R:quasi}.

\subsection{Two-sided interpolation problems} As an intermediate step toward solving the problem {\bf TSP} \eqref{4.25y}, we will
consider the augmented two-sided problem {\bf ATSP} (equivalent to the problem \eqref{pr5}) whose data set
\begin{equation}
\Omega=\{(A,\bv), \, (\bu,B), \, {\bf b}, \, {\bf d}, \, S\}
\label{data}
\end{equation}
contains, a controllable pair $(A,\bv)$, an observable pair $(\bu,B)$ and the  
target values ${\bf b}\in\mathbb F^{n\times 1}$, ${\bf d}\in\mathbb F^{1\times k}$ and $S\in\mathbb F^{n\times k}$ 
for left, right and two-sided interpolation conditions. The formal definition of the problem is as follows.

\medskip
\noindent
{\bf Problem} ${\bf ATSP}(\Omega)$: {\em Given $\Omega$ as in \eqref{data}, find all $f\in\mathbb F[z]$ subject 
to conditions \eqref{4.25y} and 
\begin{equation}
(\bv f\bu)^{\bbm}(A,B)=S.
\label{4.25ya}
\end{equation}}
By Proposition \ref{P:2.15}, the pairs $(A,\bv)$ and $(\bu,B)$ are similar to $(C_{\boldsymbol\ell}(p),{\bf e}_{1,n})$ and 
$({\bf e}_{1,k}^\top, C_{\bf r}(q))$, respectively; in more detail,
\begin{equation}
\mathfrak C_{A,\bv}^{-1}A\mathfrak C_{A,\bv}=C_{\boldsymbol\ell}(p), \; \; \mathfrak C_{A,\bv}^{-1}\bv={\bf e}_1, \; \; 
\mathfrak O_{\bu,B}B\mathfrak O_{\bu,B}^{-1}=C_{\bf r}(q),\; \; \bu \mathfrak O_{\bu,B}^{-1}={\bf e}_1^\top.
\label{pr14}
\end{equation}
Since similar pairs have the same minimal polynomials, we will use notation \eqref{po14} (i.e., $p=\mathfrak P_{A,\bv}$
and $q=\mathfrak P_{\bu,B}$) throughout this section.
\begin{remark}
Conditions \eqref{4.25y} and \eqref{4.25ya} can be equivalently written as
\begin{align}
({\bf e}_{1,n} f)^{\bl}(C_{\boldsymbol\ell}(p))&={\bf b}':=\mathfrak C_{A,\bv}^{-1}{\bf b},\label{4.24a}\\
(f{\bf e}_{1,k}^\top)^{\br}(C_{\bf r}(q))&={\bf d}':={\bf d}\mathfrak O_{\bu,B}^{-1},\label{4.24aa}\\
({\bf e}_{1,n}f {\bf e}_{1,k}^\top)^{\bbm}(C_{\boldsymbol\ell}(p),C_{\bf r}(q))&=S^\prime:=\mathfrak C_{A,\bv}^{-1}S\mathfrak O_{\bu,B}^{-1}.
\label{4.24aaa}
\end{align}
{\rm Indeed, due to equalities \eqref{pr14},  we have for any $f\in\mathbb F[z]$,
\begin{align*}
({\bf e}_{1,n} f)^{\bl}(C_{\boldsymbol\ell}(p))&=\mathfrak C_{A,\bv}^{-1}\cdot (\bv f)^{\bl}(A),\\
\quad (f{\bf e}_{1,k}^\top)^{\br}(C_{\bf r}(q))&=(f\bu)^{\br}(B)\cdot \mathfrak O_{\bu,B}^{-1},
\end{align*}
by Remark \ref{R:1.2}, while for the two-sided evaluation, we have from \eqref{4.26},
$$
({\bf e}_{1,n} f{\bf e}_{1,k}^\top)^{\bbm}(C_{\boldsymbol\ell}(p),C_{\bf r}(q))=\mathfrak C_{A,\bv}^{-1}\cdot(\bv f\bu)^{\bbm}(A,B)\cdot\mathfrak O_{\bu,B}^{-1}.
$$
Now we see from the latter equalities that interpolation conditions \eqref{4.24a}--\eqref{4.24aaa} are obtained from \eqref{4.25y}, \eqref{4.25ya}
upon multiplying the latter by invertible $T$ and  $\widetilde{T}$  on the left and/or on the right respectively, and hence, the
asserted equivalence follows.}
\label{R:m2}
\end{remark}
As we know from Theorem \ref{L:l1}, the interpolation problem \eqref{4.24a}--\eqref{4.24aaa}) has a solution if and only if 
\begin{equation}
C_{\boldsymbol\ell}(p)S^\prime-S^\prime C_{\bf r}(q)={\bf b}'{\bf e}_{1,k}^\top -{\bf e}_{1,n}{\bf d}',
\label{pr13a}
\end{equation}
in which case the only solution is given by \eqref{pr12}, i.e., 
\begin{equation}
f=\mathfrak A_{n}{\bf b}'+p{\bf e}_{n,n}^\top S'\mathfrak A_{k}^\top=
{\bf d}^\prime\mathfrak A_{k}^\top+\mathfrak A_{n}S'{\bf e}_{k,k}q.
\label{pr14a}
\end{equation}
Using relations \eqref{pr14} and the rightmost equalities in \eqref{4.24a}--\eqref{4.24aaa}, we may write \eqref{pr13a} and \eqref{pr14a}
in terms of $\Omega$ to arrive at the following result. 
\begin{theorem} The problem {\bf ATSP} has a solution if and only if 
\begin{equation}
AS-SB={\bf b}\bu-\bv {\bf d}, 
\label{pr15}
\end{equation}
in which case the only solution is given by either formula 
\begin{align}
f(z)&=\mathfrak A_{n}(z)\mathfrak C_{A,\bv}^{-1}{\bf b}+
\mathfrak P_{A,\bv}(z){\bf e}_{n,n}^\top \mathfrak C_{A,\bv}^{-1}S\mathfrak O_{\bu,B}^{-1}\mathfrak A_{k}(z)^\top\notag\\
&={\bf d}\mathfrak O_{\bu,B}^{-1}\mathfrak A_{k}(z)^\top
+\mathfrak A_{n}(z)\mathfrak C_{A,\bv}^{-1}S\mathfrak O_{\bu,B}^{-1}{\bf e}_{k,k}\mathfrak P_{\bu,B}(z).
\label{4.29}
\end{align}
\label{T:4.10u}
\end{theorem}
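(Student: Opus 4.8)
The plan is to reduce Problem {\bf ATSP} to its companion-matrix form \eqref{pr5}, which has already been resolved in Theorem \ref{L:l1}, and then to conjugate the resulting solvability criterion and the solution formula back to the original data $\Omega$. First I would invoke Remark \ref{R:m2}: since $(A,\bv)\sim(C_{\boldsymbol\ell}(p),{\bf e}_{1,n})$ and $(\bu,B)\sim({\bf e}_{1,k}^\top,C_{\bf r}(q))$ via the intertwining relations \eqref{pr14}, a polynomial $f$ with $\deg f<n+k$ satisfies the interpolation conditions \eqref{4.25y} and \eqref{4.25ya} if and only if it satisfies the conditions \eqref{pr5} with ${\bf b}'=\mathfrak C_{A,\bv}^{-1}{\bf b}$, ${\bf d}'={\bf d}\,\mathfrak O_{\bu,B}^{-1}$ and $X=S':=\mathfrak C_{A,\bv}^{-1}S\,\mathfrak O_{\bu,B}^{-1}$, and with $p=\mathfrak P_{A,\bv}$, $q=\mathfrak P_{\bu,B}$ as in \eqref{po14}. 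The degree restriction is no loss of generality by Remark \ref{R:4.3}. By Theorem \ref{L:l1}, the translated problem is solvable exactly when $S'$ solves the Sylvester equation \eqref{pr13a}, and in that case $f$ is the unique polynomial of degree $<n+k$ given by \eqref{pr14a}.

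Next I would transport the criterion \eqref{pr13a} back to $\Omega$. Multiplying \eqref{pr13a} on the left by $\mathfrak C_{A,\bv}$ and on the right by $\mathfrak O_{\bu,B}$, and using the relations $\mathfrak C_{A,\bv}C_{\boldsymbol\ell}(p)=A\mathfrak C_{A,\bv}$, $C_{\bf r}(q)\mathfrak O_{\bu,B}=\mathfrak O_{\bu,B}B$, $\mathfrak C_{A,\bv}{\bf e}_{1,n}=\bv$, ${\bf e}_{1,k}^\top\mathfrak O_{\bu,B}=\bu$ (all immediate from \eqref{pr14}), together with $\mathfrak C_{A,\bv}S'\mathfrak O_{\bu,B}=S$, the left-hand side becomes $AS-SB$ and the right-hand side becomes ${\bf b}\bu-\bv{\bf d}$; this is exactly \eqref{pr15}. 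For the solution formula, I would substitute ${\bf b}'$, ${\bf d}'$, $S'$ and the identifications $p=\mathfrak P_{A,\bv}$, $q=\mathfrak P_{\bu,B}$ into the two expressions of \eqref{pr14a}: using ${\bf e}_{n,n}^\top S'={\bf e}_{n,n}^\top\mathfrak C_{A,\bv}^{-1}S\mathfrak O_{\bu,B}^{-1}$ and $S'{\bf e}_{k,k}=\mathfrak C_{A,\bv}^{-1}S\mathfrak O_{\bu,B}^{-1}{\bf e}_{k,k}$, the first expression becomes the first line of \eqref{4.29} and the second becomes the second line. Uniqueness of the solution among polynomials of degree $<n+k$ is inherited verbatim from Theorem \ref{L:l1}, and the fact that the two lines of \eqref{4.29} agree is simply the agreement of the two lines of \eqref{pr14a} after the substitution.

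I expect no genuine obstacle: the argument is a translation exercise built entirely on Theorem \ref{L:l1} and Remark \ref{R:m2}. The only point that requires care is the left-versus-right placement of the similarity matrices $\mathfrak C_{A,\bv}$ and $\mathfrak O_{\bu,B}$ throughout the conjugation, and the verification that the coordinate vectors ${\bf e}_{1,n}$, ${\bf e}_{n,n}$, ${\bf e}_{1,k}$, ${\bf e}_{k,k}$ transform as claimed under that conjugation, which is precisely what the relations \eqref{pr14} encode.
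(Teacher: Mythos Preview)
Your proposal is correct and follows essentially the same route as the paper: reduce {\bf ATSP} to the companion-matrix problem via Remark~\ref{R:m2}, apply Theorem~\ref{L:l1} to obtain the solvability criterion \eqref{pr13a} and the unique solution \eqref{pr14a}, and then conjugate back through the relations \eqref{pr14} to arrive at \eqref{pr15} and \eqref{4.29}. The only superfluous remark is the appeal to Remark~\ref{R:4.3}, since the degree constraint $\deg f<n+k$ is already part of the problem {\bf ATSP} rather than an assumption to be justified.
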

Note that since the first terms in formulas \eqref{4.29} solve the respective one-sided problems \eqref{4.22} and \eqref{4.24} and the second terms are 
multiples of $\mathfrak P_{A,\bv}$ and $\mathfrak P_{\bu,B}$, respectively, it is immediate that $f$ defined in \eqref{4.29} satisfies conditions \eqref{4.25y}. A nontrivial part here is 
that $f$ also satisfies the third condition \eqref{4.25ya} and that two formulas in \eqref{4.29} represent the same polynomial. In particular, it follows
from \eqref{4.29} that the problem {\bf ATSP}($\Omega$) is redundant: if \eqref{pr15} holds and $f$ satisfies \eqref{4.25ya} and any one of the two 
conditions in \eqref{4.25y}, then it also satisfies the second. In fact, the condition \eqref{4.25ya} alone determines the polynomial 
$f$ of degree less than $n+k$ up to a constant. 
\begin{theorem}
Given $\Omega=\{(A,\bv), \, (\bu,B), \, S\}$ as above, there is  a polynomial $f\in\mathbb F[z]$ subject to condition
\eqref{4.24aaa} if and only if
\begin{equation}
({\bf I}_n-{\bf e}_{1,n}{\bf e}^\top_{1,n})\mathfrak C_{A,\bv}^{-1}(AS-SB)\mathfrak O_{\bu,B}^{-1}({\bf I}_k-{\bf e}_{1,k}{\bf e}^\top_{1,k})=0,
\label{pr16}
\end{equation}
in which case a solution is uniquely defined (up to an arbitrary additive constant $\alpha\in\mathbb F$) by the formula
\begin{align}
f(z)=&\alpha +\mathfrak A_{n}(z)\mathfrak C_{A,\bv}^{-1}(AS-SB)\mathfrak O_{\bu,B}^{-1}{\bf e}_{1,n}\notag\\
&\quad +\mathfrak P_{A,\bv}(z){\bf e}_{n,n}^\top \mathfrak C_{A,\bv}^{-1}S\mathfrak O_{\bu,B}^{-1}\mathfrak A_{k}(z)^\top.
\label{pr17}
\end{align}
\label{T:lmn}
\end{theorem}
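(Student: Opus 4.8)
The plan is to pass to the companion-matrix coordinates of Remark~\ref{R:m2} and then harvest the answer from Theorem~\ref{L:l1}. Write $p=\mathfrak P_{A,\bv}$, $q=\mathfrak P_{\bu,B}$, $S'=\mathfrak C_{A,\bv}^{-1}S\mathfrak O_{\bu,B}^{-1}$. Using \eqref{pr14} one computes $\mathfrak C_{A,\bv}^{-1}(AS-SB)\mathfrak O_{\bu,B}^{-1}=C_{\boldsymbol\ell}(p)S'-S'C_{\bf r}(q)=:\Delta$, so that condition \eqref{pr16} reads $({\bf I}_n-{\bf e}_{1,n}{\bf e}_{1,n}^\top)\,\Delta\,({\bf I}_k-{\bf e}_{1,k}{\bf e}_{1,k}^\top)=0$, i.e.\ $\Delta$ is supported on its first row and first column. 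Also, by Remark~\ref{R:m2}, condition \eqref{4.24aaa} is the same as $\Upsilon^f=S'$ in the notation \eqref{3.4akl}, and by \eqref{rep1} together with Remark~\ref{R:twos} it is enough to look for $f$ with $\deg f<n+k$.

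For necessity, I would apply Proposition~\ref{R:klj} (equivalently Corollary~\ref{C:5.2}) to a putative solution $f$: since $(\bv f\bu)^{\bbm}(A,B)=S$, one gets $AS-SB=(\bv f)^{\bl}(A)\bu-\bv(f\bu)^{\br}(B)$. Multiplying by $\mathfrak C_{A,\bv}^{-1}$ on the left and $\mathfrak O_{\bu,B}^{-1}$ on the right and using $\mathfrak C_{A,\bv}^{-1}\bv={\bf e}_{1,n}$ and $\bu\mathfrak O_{\bu,B}^{-1}={\bf e}_{1,k}^\top$ from \eqref{pr14}, one finds $\Delta={\bf b}'{\bf e}_{1,k}^\top-{\bf e}_{1,n}{\bf d}'$ with ${\bf b}'=\mathfrak C_{A,\bv}^{-1}(\bv f)^{\bl}(A)$ and ${\bf d}'=(f\bu)^{\br}(B)\mathfrak O_{\bu,B}^{-1}$; any such matrix is annihilated by the two outer projections, which is \eqref{pr16}.

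For sufficiency, assume \eqref{pr16}, so $\Delta$ vanishes off its first row and first column. Then $\Delta={\bf b}'{\bf e}_{1,k}^\top-{\bf e}_{1,n}{\bf d}'$ for suitable ${\bf b}'\in\mathbb F^{n\times 1}$ and ${\bf d}'\in\mathbb F^{1\times k}$ — e.g.\ ${\bf b}'=\Delta{\bf e}_{1,k}$ and ${\bf d}'=-{\bf e}_{1,n}^\top\Delta+\Delta_{11}{\bf e}_{1,k}^\top$ — the only freedom being the simultaneous replacement ${\bf b}'\mapsto{\bf b}'+\beta{\bf e}_{1,n}$, ${\bf d}'\mapsto{\bf d}'+\beta{\bf e}_{1,k}^\top$ with $\beta\in\mathbb F$. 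Thus $S'$ solves the Sylvester equation \eqref{pr13a}, and Theorem~\ref{L:l1} (whence \eqref{pr14a}) produces a polynomial $f$ with $\deg f<n+k$, $({\bf e}_{1,n}f)^{\bl}(C_{\boldsymbol\ell}(p))={\bf b}'$, $(f{\bf e}_{1,k}^\top)^{\br}(C_{\bf r}(q))={\bf d}'$, and $\Upsilon^f=S'$. Translating \eqref{pr14a} back via $p=\mathfrak P_{A,\bv}$, $S'=\mathfrak C_{A,\bv}^{-1}S\mathfrak O_{\bu,B}^{-1}$ and ${\bf b}'=\mathfrak C_{A,\bv}^{-1}(AS-SB)\mathfrak O_{\bu,B}^{-1}{\bf e}_{1,k}$ yields \eqref{pr17}; the one-parameter freedom above corresponds precisely to the free additive constant $\alpha$ in \eqref{pr17}, which is legitimate because the constants are exactly the degree-$(<n+k)$ polynomials with vanishing two-sided evaluation (Remark~\ref{R:twos}). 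For uniqueness up to a constant, if $f_1,f_2$ (degree $<n+k$) both satisfy \eqref{4.24aaa}, then $\Upsilon^{f_1-f_2}=0$, so $f_1-f_2=\alpha+phq$ by Remark~\ref{R:twos}, and the degree bound forces $h=0$, i.e.\ $f_1-f_2=\alpha\in\mathbb F$.

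The step I expect to be most delicate is purely organizational: recognizing that ``$\Delta$ supported on its first row and column'' is exactly what the two outer projections in \eqref{pr16} express, and then checking that the canonical splitting $\Delta={\bf b}'{\bf e}_{1,k}^\top-{\bf e}_{1,n}{\bf d}'$ — up to its genuine one-parameter ambiguity — passes through Theorem~\ref{L:l1} and collapses to \eqref{pr17} with an arbitrary $\alpha$. The genuinely substantive work, namely producing $f$ from a Sylvester solution and verifying $\Upsilon^f=S'$, is already contained in Theorem~\ref{L:l1}.
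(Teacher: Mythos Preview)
Your proof is correct and follows essentially the same route as the paper's own proof: pass to companion coordinates via Remark~\ref{R:m2}, recognize that \eqref{pr16} says $\Delta=C_{\boldsymbol\ell}(p)S'-S'C_{\bf r}(q)$ is supported on its first row and column, split $\Delta={\bf b}'{\bf e}_{1,k}^\top-{\bf e}_{1,n}{\bf d}'$ (with the one-parameter ambiguity giving the free $\alpha$), and then invoke Theorem~\ref{L:l1} to produce and identify $f$. Your explicit uniqueness argument via Remark~\ref{R:twos} applied to $f_1-f_2$ is a minor organizational variant of the paper's observation that all admissible $({\bf b}',{\bf d}')$ are parametrized by $\alpha$; both are valid.
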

\begin{proof}
As in the proof of the previous theorem, we pass to the equivalent interpolation problem \eqref{4.24aaa} (with $p$ and $q$ as in \eqref{po14}). 
If there is a polynomial $f\in\mathbb F[z]$ satisfying
\eqref{4.24aaa}, then equality \eqref{pr13a} holds for some ${\bf b}'$ and ${\bf d}'$. Then we also have 
\begin{equation}
({\bf I}_n-{\bf e}_{1,n}{\bf e}^\top_{1,n})(C_{\boldsymbol\ell}(p)S'-S'C_{\bf r}(q))({\bf I}_k-{\bf e}_{1,k}{\bf e}^\top_{1,k})=0,
\label{pr18}
\end{equation}
which is the same as \eqref{pr16}, due to relations \eqref{pr14} and the rightmost equality in \eqref{4.24aaa}. Conversely, 
if \eqref{pr16} holds, we see from \eqref{pr18} (which is equivalent to \eqref{pr16}) that equality \eqref{pr13a} holds for 
\begin{align}
{\bf b}^\prime&={\bf e}_{1,n}\alpha+(C_{\boldsymbol\ell}(p)S'-S'C_{\bf r}(q)){\bf e}_{1,k}, \label{pr19}\\
{\bf d}'&=\alpha{\bf e}_{1,k}^\top-{\bf e}^\top_{1,n}(C_{\boldsymbol\ell}(p)S'-S'C_{\bf r}(q))({\bf I}_k-{\bf e}_{1,k}{\bf e}^\top_{1,k}),
\quad \alpha\in\mathbb F,\notag
\end{align}
and that conversely, if \eqref{pr13a} holds for some ${\bf b}'$ and ${\bf d}'$, the latter two are necessarily of the form \eqref{pr19}
for some $\alpha\in\mathbb F$ (the formulas \eqref{pr19} can be made more symmetric upon shifting the parameter $\alpha$ but we do not 
need this). Now we use the first formula in \eqref{pr14a} with ${\bf b}'$ as in \eqref{pr19} to get
$$
f(z)=\mathfrak A_{n}(z){\bf e}_{1,n}\alpha+
\mathfrak A_{n}(z)(C_{\boldsymbol\ell}(p)S'-S'C_{\bf r}(q)){\bf e}_{1,k}+p(z){\bf e}_{n,n}^\top S'\mathfrak A_{k}(z)^\top.
$$
Replacing in the latter formula $C_{\boldsymbol\ell}(p)$, $C_{\bf r}(q)$, $S'$ by $A$, $B$, $S$ according to \eqref{pr14}, \eqref{4.24aaa} 
and taking into account that $\mathfrak A_{n}(z){\bf e}_{1,n}=1$, we get \eqref{pr17}.
\end{proof}
Now we drop the two-sided condition \eqref{4.25ya} getting back to the problem {\bf TSP}, namely: {\em given $\Omega$ as in \eqref{data} 
(without $S$ though), find an $f\in\mathbb F[z]$ subject to interpolation conditions \eqref{4.25y}}.
\begin{theorem}
The problem {\bf TSP} has a solution if and only if the Sylvester equation
\begin{equation}
AY-YB={\bf b}\bu-\bv {\bf d}
\label{4.28}
\end{equation}
admits a solution $Y\in\mathbb F^{n\times k}$. For each such solution $Y$, the polynomial
\begin{align}
f_Y(z)&=\mathfrak A_{n}(z)\mathfrak C_{A,\bv}^{-1}{\bf b}+\mathfrak P_{A,\bv}(z){\bf e}_{n,n}^\top 
\mathfrak C_{A,\bv}^{-1}Y\mathfrak O_{\bu,B}^{-1}\mathfrak A_{k}(z)^\top\notag \\
&={\bf d}\mathfrak O_{\bu,B}^{-1}\mathfrak A_{k}(z)^\top
+\mathfrak A_{n}(z)\mathfrak C_{A,\bv}^{-1}Y\mathfrak O_{\bu,B}^{-1}{\bf e}_{k,k}\mathfrak P_{\bu,B}(z)
\label{4.38}
\end{align}
satisfies conditions \eqref{4.25y}. Moreover either of the formulas \eqref{4.38} establishes a one-to-one 
correspondence between solutions $Y$ to the equation \eqref{4.28} and solutions to the problem  
{\bf TSP}.
\label{T:4.10}
\end{theorem}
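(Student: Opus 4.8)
The plan is to assemble the statement from Corollary~\ref{C:5.2} and Theorem~\ref{T:4.10u}, using the two-sided evaluation $\Psi\colon f\mapsto(\bv f\bu)^{\bbm}(A,B)$ as the bridge between the problems {\bf TSP} and {\bf ATSP}. For the ``only if'' direction, suppose $f$ solves {\bf TSP}, i.e.\ $f$ satisfies \eqref{4.25y}; then Corollary~\ref{C:5.2} applies verbatim and shows that $Y:=\Psi(f)$ solves the Sylvester equation \eqref{4.28}. Thus $\Psi$ sends solutions of {\bf TSP} to solutions of \eqref{4.28}, and in particular solvability of {\bf TSP} forces solvability of \eqref{4.28}.

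Conversely, let $Y\in\mathbb F^{n\times k}$ solve \eqref{4.28} and apply Theorem~\ref{T:4.10u} to the data \eqref{data} with $S=Y$. Since $AY-YB={\bf b}\bu-\bv{\bf d}$, the problem {\bf ATSP} with this data has a unique solution, which is precisely $f_Y$ of \eqref{4.38} (this is \eqref{4.29} with $S$ replaced by $Y$). By the definition of {\bf ATSP}, $f_Y$ satisfies the interpolation conditions \eqref{4.25y}, so it solves {\bf TSP}, and moreover $(\bv f_Y\bu)^{\bbm}(A,B)=Y$, i.e.\ $\Psi(f_Y)=Y$; the degree bound $\deg f_Y<n+k$ is immediate from \eqref{4.38} since $\mathfrak P_{A,\bv}$ is monic of degree $n$ while the remaining polynomial factors have degrees $<n$ and $<k$. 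Hence $Y\mapsto f_Y$ sends solutions of \eqref{4.28} to solutions of {\bf TSP} and is a right inverse of $\Psi$.

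It remains to check that $Y\mapsto f_Y$ is also a left inverse of $\Psi$. Given a solution $f$ of {\bf TSP}, set $Y:=\Psi(f)$, which solves \eqref{4.28} by the first paragraph; then $f$ satisfies both \eqref{4.25y} and $(\bv f\bu)^{\bbm}(A,B)=Y$, so $f$ is a solution of {\bf ATSP} with data \eqref{data} and $S=Y$, and the uniqueness clause in Theorem~\ref{T:4.10u} identifies it with $f_Y$. Therefore $\Psi$ and $Y\mapsto f_Y$ are mutually inverse bijections between the solutions of \eqref{4.28} and the solutions of {\bf TSP}; since $f_Y$ is by construction represented by both lines of \eqref{4.38}, either formula may be used. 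No step is genuinely hard once Corollary~\ref{C:5.2} and Theorem~\ref{T:4.10u} are in hand; the only point needing care is the observation that $\Psi$ is exactly the inverse of $Y\mapsto f_Y$, which is what converts the uniqueness in Theorem~\ref{T:4.10u} into injectivity of the parametrization.
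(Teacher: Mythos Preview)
Your argument is correct. The key ingredients---Corollary~\ref{C:5.2} for the map $\Psi:f\mapsto(\bv f\bu)^{\bbm}(A,B)$ and Theorem~\ref{T:4.10u} for the inverse $Y\mapsto f_Y$---do exactly what you claim, and the uniqueness clause in Theorem~\ref{T:4.10u} is precisely what turns the construction into a bijection.

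The paper organizes the proof differently: rather than invoking Theorem~\ref{T:4.10u} as a black box, it reduces directly to companion-matrix form via the similarities \eqref{pr14}, rewrites the Sylvester equation \eqref{4.28} as \eqref{4.28a} in the variable $X=\mathfrak C_{A,\bv}^{-1}Y\mathfrak O_{\bu,B}^{-1}$, and then appeals to Theorem~\ref{L:l1} (the detailed correspondence between Sylvester solutions and interpolants in the companion-matrix setting). Your route is essentially the same mathematics one level up: Theorem~\ref{T:4.10u} was itself proved by this very reduction to Theorem~\ref{L:l1}, so you are quoting a packaged version of the paper's argument and adding the explicit observation that $\Psi$ is the inverse map. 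This buys you a cleaner, more conceptual presentation; the paper's direct appeal to Theorem~\ref{L:l1} is slightly more self-contained and makes the role of the substitution $X=\mathfrak C_{A,\bv}^{-1}Y\mathfrak O_{\bu,B}^{-1}$ explicit, which also explains Remark~\ref{R:feb15} about which entries of $Y$ actually matter.
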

\begin{proof}
As in the previous proof, we pass to the equivalent interpolation problem with interpolation conditions 
\eqref{4.24a}, \eqref{4.24aa}. We next multiply both  sides of \eqref{4.28} by $\mathfrak C_{A,\bv}^{-1}$ on the left and 
by $\mathfrak O_{\bu,B}^{-1}$ on the right. On account of \eqref{pr14} and the rightmost definitions in 
\eqref{4.24a}, \eqref{4.24aa}, the resulting equality can be written as
\begin{equation}
C_{\boldsymbol\ell}(p)X-XC_{\bf r}(q)={\bf b}'{\bf e}_{1,k}^\top-{\bf e}_{1,n}{\bf d}',\quad\mbox{where}\quad
X=\mathfrak C_{A,\bv}^{-1}Y\mathfrak O_{\bu,B}^{-1}.
\label{4.28a}
\end{equation}
Since $Y$ solves the Sylvester equation \eqref{4.28} if and only if $X=\mathfrak C_{A,\bv}^{-1}Y\mathfrak O_{\bu,B}^{-1}$
solves \eqref{4.28a}, all the statements now follow from Theorem \ref{L:l1}. The formulas for 
$f_X$ are the same as in \eqref{pr14a} (but with $X$ instead of $S'$). Writing these formulas in terms of $\Omega$ and $Y$
(rather than $\Omega'$ and $X$), again making use of  \eqref{pr14} and the rightmost equalities in \eqref{4.24a}, \eqref{4.24aa},
we get \eqref{4.38}.
\end{proof}
\begin{remark}
{\rm Note that the actual parameters in formulas \eqref{4.38} are the bottom row and the rightmost column of the matrix
$\mathfrak C_{A,\bv}^{-1}Y\mathfrak O_{\bu,B}^{-1}$ rather than the whole matrix $Y$. In other words}, the number of
independent scalar parameters in the parametrization formulas \eqref{4.38} is at most $\min\{n,k\}$.
\label{R:feb15}
\end{remark}
\begin{remark}
The polynomial $f_Y$ defined in \eqref{4.38} also can be written as 
\begin{align}
f_Y(z)&=\mathfrak A_{n}(z)\mathfrak C_{A,\bv}^{-1}\left({\bf b}+(z{\bf I}_n-A)Y
\mathfrak O_{\bu,B}^{-1}\mathfrak A_{k}(z)^\top\right)\notag \\
&=\left( {\bf d}
+\mathfrak A_{n}(z)\mathfrak C_{A,\bv}^{-1}Y(z{\bf I}_k-B)\right)\mathfrak O_{\bu,B}^{-1}\mathfrak P_{\bu,B}(z).
\label{4.38u}
\end{align}
\label{R:alt}
\end{remark}
\begin{proof}
We first observe that the matrices $\mathfrak C_{A,\bv}$ and $\mathfrak O_{\bu,B}$ satisfy equalities
\begin{equation}
A\mathfrak C_{A,\bv}-\mathfrak C_{A,\bv}F_n=A^n\bv{\bf e}_{n,n}^\top,\quad \mathfrak O_{\bu,B}B-F_k^\top\mathfrak O_{\bu,B}={\bf e}_{k,k}\bu B^k.
\label{4.28b}
\end{equation}
where $F_n=\mathcal J_n(0)$ is the $n\times n$ lower triangular Jordan block with zeros on the main diagonal. Indeed, by the definition \eqref{1.2x}
of $\mathfrak C_{A,\bv}$, we have 
\begin{align*}
A\mathfrak C_{A,\bv}-\mathfrak C_{A,\bv}F_n&=\begin{bmatrix} A\bv & \ldots & A^{n-1}\bv & A^{n}\bv\end{bmatrix}
-\begin{bmatrix} A\bv & \ldots & A^{n-1}\bv & 0\end{bmatrix}\\
&=\begin{bmatrix} 0 & \ldots & 0 & A^{n}\bv\end{bmatrix}=A^n\bv{\bf e}_{n,n}^\top,
\end{align*}
verifying the first equality in \eqref{4.28b}. The second follows similarly from the definition \eqref{1.2u}. 
Making use of equalities \eqref{4.28b} along with the identity 
$$
z^n{\bf e}_{n,n}^\top+\mathfrak A_{n}(z)F_n=z\mathfrak A_{n}(z)
$$
(see \eqref{fraka}) and explicit formulas \eqref{1.4} and \eqref{1.4u} of $\mathfrak P_{A,\bv}$ and $\mathfrak P_{\bu,B}$, we get
\begin{align}
\mathfrak P_{A,\bv}(z){\bf e}_{n,n}^\top\mathfrak C_{A,\bv}^{-1}&=
z^n{\bf e}_{n,n}^\top\mathfrak C_{A,\bv}^{-1}-\mathfrak A_{n}(z)\mathfrak C_{A,\bv}^{-1}A^n\bv{\bf e}_{n,n}^\top\mathfrak C_{A,\bv}^{-1}\notag \\
&=z^n{\bf e}_{n,n}^\top\mathfrak C_{A,\bv}^{-1}-\mathfrak A_{n}(z)\mathfrak C_{A,\bv}^{-1}\big(A\mathfrak C_{A,\bv}-\mathfrak C_{A,\bv}F_n\big)
\mathfrak C_{A,\bv}^{-1}\notag \\
&=\big(z^n{\bf e}_{n,n}^\top+\mathfrak A_{n}(z)F_n\big)\mathfrak C_{A,\bv}^{-1}-\mathfrak A_{n}(z)\mathfrak C_{A,\bv}^{-1}A\notag \\
&=z\mathfrak A_{n}(z)\mathfrak C_{A,\bv}^{-1}-\mathfrak A_{n}(z)\mathfrak C_{A,\bv}^{-1}A\notag \\
&=\mathfrak A_{n}(z)\mathfrak C_{A,\bv}^{-1}\big(z{\bf I}_n-A\big).\notag
\end{align}
and similarly,
$$
\mathfrak O_{\bu,B}^{-1}{\bf e}_{k,k}\mathfrak P_{\bu,B}(z)=
\left(z{\bf I}_k-B\right)\mathfrak O_{\bu,B}^{-1}\mathfrak A_{k}(z)^\top. 
$$
Substituting the two latter equalities into \eqref{4.38}, we arrive at \eqref{4.38u}.
\end{proof}
As an application of Theorem \ref{T:4.10}, we get simple sufficient conditions for 
the problem {\bf TSP} to have a unique solution. 
\begin{proposition}
Given the data set \eqref{data}, let us assume that $A$ is algebraic and let
$\bmu_A(z)=z^\kappa+\mu_{\kappa-1}z^{\kappa-1}+\ldots+\mu_0$  be its minimal central polynomial. If
the  matrix $\bmu_{A}(B)$ is invertible, then the problem {\bf TSP} has a unique solution 
given by formulas \eqref{4.38} with 
\begin{equation}
Y=\sum_{i=1}^{\kappa}\mu_{j}\sum_{i=0}^{j-1}A^i(\bv {\bf d}-{\bf b}\bu)B^{j-i-1} \cdot \bmu_{A}(B)^{-1}.
\label{lowdeg}
\end{equation}
\label{P:ex1}
\end{proposition}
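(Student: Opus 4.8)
The plan is to reduce the whole statement to solvability and uniqueness for a single Sylvester equation. By Theorem \ref{T:4.10}, the problem {\bf TSP} has a solution if and only if $AY-YB={\bf b}\bu-\bv{\bf d}$ is solvable for $Y\in\mathbb F^{n\times k}$, and the map $Y\mapsto f_Y$ given by \eqref{4.38} is a bijection between solutions of that Sylvester equation and solutions of {\bf TSP}. So it is enough to prove that, when $\bmu_A(B)$ is invertible, the equation $AY-YB={\bf b}\bu-\bv{\bf d}$ has a unique solution, and that this solution is exactly the matrix \eqref{lowdeg}; the formula for $f$ is then simply \eqref{4.38} with this $Y$.

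For uniqueness I would examine the homogeneous equation $AY-YB=0$. A straightforward induction on $m$ gives $A^mY=YB^m$ for all $m\ge 0$. Since $\bmu_A(z)=\sum_{j=0}^{\kappa}\mu_jz^j$ lies in $Z_{\mathbb F}[z]$, its left and right evaluations coincide and the central scalars $\mu_j$ may be moved across matrices freely; summing the above identities against the $\mu_j$ therefore yields $\bmu_A(A)Y=Y\bmu_A(B)$. As $\bmu_A(A)=0$ by the definition of the minimal central polynomial, this forces $Y\bmu_A(B)=0$, and invertibility of $\bmu_A(B)$ gives $Y=0$. Hence the inhomogeneous equation has at most one solution.

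For existence I would exhibit the solution directly. Put $D:=\bv{\bf d}-{\bf b}\bu$ and $W:=\sum_{j=0}^{\kappa}\mu_j\sum_{i=0}^{j-1}A^iDB^{j-1-i}$ (the inner sum empty when $j=0$). The telescoping identity $A^jD-DB^j=\sum_{i=0}^{j-1}\bigl(A^{i+1}DB^{j-1-i}-A^iDB^{j-i}\bigr)$, combined over $j$ with the central weights $\mu_j$, gives $AW-WB=\bmu_A(A)D-D\bmu_A(B)=-D\bmu_A(B)$. Now $\bmu_A(B)$ is a polynomial in $B$ with central coefficients, so it commutes with $B$ and hence its inverse commutes with $B$ as well; therefore $Y:=W\bmu_A(B)^{-1}$ satisfies $AY-YB=(AW-WB)\bmu_A(B)^{-1}=-D={\bf b}\bu-\bv{\bf d}$. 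This $Y$ is precisely the matrix in \eqref{lowdeg}, and feeding it into \eqref{4.38} produces the unique solution of {\bf TSP}. The computations are essentially bookkeeping; the only point requiring a little care — and the place where the hypothesis that $A$ be algebraic enters — is that $\bmu_A$ has central coefficients, which is what lets the telescoping sum collapse into $\bmu_A(A)D-D\bmu_A(B)$ and what makes $\bmu_A(B)$ commute with $B$ in the final step.
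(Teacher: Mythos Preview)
Your proof is correct and follows essentially the same line as the paper's: both rest on the telescoping identity $\sum_{j}\mu_j\sum_{i=0}^{j-1}A^{i}(AY-YB)B^{j-1-i}=\bmu_A(A)Y-Y\bmu_A(B)=-Y\bmu_A(B)$, used first to force uniqueness and then (with $D=\bv{\bf d}-{\bf b}\bu$ in place of $Y$) to exhibit the solution \eqref{lowdeg}. The only cosmetic difference is that you phrase uniqueness via the homogeneous equation while the paper reads the formula for $Y$ off the identity directly.
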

\begin{proof}
Since $\bmu_A\in Z_{\mathbb F}[z]$ and $\bmu_{A}(A)=0$,
we have for any $Y\in\mathbb F^{n\times k}$,
\begin{equation}
-Y\bmu_{A}(B)=\bmu_{A}(A)Y -Y\bmu_{A}(B)=\sum_{j=1}^{\kappa}\mu_{j}\sum_{i=0}^{j-1}A^i(AY-YB)B^{j-i-1}.\label{po7}
\end{equation}
If $Y$ satisfies \eqref{4.28}, we replace $AY-YB$ on the right side of \eqref{po7} by ${\bf b}\bu-\bv {\bf d}$ and see that
$Y$ is uniquely defined from \eqref{po7} by the formula \eqref{lowdeg}.
To verify that $Y$ of the form \eqref{lowdeg} indeed satisfies
\eqref{4.28}, we use equality \eqref{po7} with $\bv {\bf d}-{\bf b}\bu$ instead of $Y$:
$$
({\bf b}\bu-\bv {\bf d})\bmu_{A}(B)=\sum_{j=1}^{\kappa}\mu_{j}\sum_{i=0}^{j-1}A^i(A(\bv {\bf d}-{\bf b}\bu)-(\bv {\bf d}-{\bf b}\bu)B)B^{j-i-1}.
$$
If $Y$ is defined as in \eqref{lowdeg}, the expression on the right side 
can be written as $AY\bmu_{A}(B)-Y\bmu_{A}(B)B$. Since the matrices $B$ and $\bmu_A(B)$ commute, we therefore, have
$$
({\bf b}\bu-\bv {\bf d})\bmu_{A}(B)=AY\bmu_{A}(B)-YB\bmu_{A}(B),
$$
which is equivalent to \eqref{4.28}, since $\bmu_{A}(B)$ is invertible.
The rest follows by Theorem \ref{T:4.10}.
\end{proof}
\subsection{Lagrange interpolation} Given interpolation nodes $\alpha_1,\ldots,\alpha_n$ and $\beta_1,\ldots,\beta_k$ in $\mathbb F$
along with target values $b_1,\ldots,b_n$, $d_1,\ldots, d_k$, the {\em two-sided Lagrange interpolation problem} consists of finding an
$f\in\mathbb F[z]$ such that
\begin{equation}
f^{\bl}(\alpha_i)=b_i\quad (i=1,\ldots,n)\quad\mbox{and}\quad f^{\br}(\beta_j)=d_j\quad (j=1,\ldots,k).
\label{5.1}
\end{equation}
We refer to \cite{bollag} for a detailed treatment of this problem. Here we only show that under the assumption that
\begin{equation}
\begin{array}{l}
\mbox{(a) the set $\Lambda_{\boldsymbol\ell}=\{\alpha_1,\ldots,\alpha_n\}$ is left $P$-independent,}\\
\mbox{(b) the set $\Lambda_{\bf r}=\{\beta_1,\ldots,\beta_k\}$ is right $P$-independent,}
\end{array}
\label{ass}
\end{equation}
the problem can be embedded into the scheme of ${\bf TSP}$. To this end, note 
that interpolation conditions \eqref{4.22} and \eqref{4.24} specified to the case
\begin{equation}
\begin{array}{rlll}
A=\sbm{ \alpha_1 & & 0 \\ &\ddots & \\ 0 & & \alpha_n},&\; B=\sbm{\beta_1 & & 0 \\ &\ddots & \\ 0 & & \beta_k},&
{\bf v}=\sbm{ 1 \vspace{-1mm}\\  \vdots \vspace{1mm}\\ 1},&
{\bf b}=\sbm{ b_1 \\ \vdots \\ b_n}, \vspace{1mm}\\
{\bf u}=\begin{bmatrix} 1 & \ldots & 1\end{bmatrix},&\; {\bf d}=\begin{bmatrix} d_1 & \ldots & d_k\end{bmatrix}
&&\end{array}
\label{4.90}
\end{equation}
amount to conditions \eqref{5.1}. 
By Proposition \ref{P:br4}, the assumptions \eqref{ass} ensure the pair $(A,\bv)$ be controllable 
and the pair $(\bu,B)$ be observable and hence, all general results from Section 4.5 apply.
Theorem \ref{T:4.10} describes all solutions 
to the problem \eqref{5.1} in terms of solutions $Y=\left[y_{ij}\right]$ of the Sylvester equation \eqref{4.28},
which in the present setting breaks up into the system of 
$nk$ scalar equations 
\begin{equation}
\alpha_i y_{ij}-y_{ij}\beta_j=b_i-d_j\qquad (1\le i\le n, \; 1\le j\le k).
\label{po5}
\end{equation}
In the case where $\alpha_i$ or $\beta_j$ are algebraic over $Z_{\mathbb F}$, the solvability
criterion for the equation \eqref{po5} (as well as the parametrization of all solutions in the indeterminate case) known
from \cite{jacob1} lead to an explicit description of all solutions to the problem \eqref{5.1}. 

\smallskip 

{\bf Acknowledgements:} The project was partially supported by by Simons Foundation grant 524539

\smallskip

{\bf Declaration of competing interest:}
The author declared that he had no conflicts of interest with respect to their authorship or the publication of this article.

\bibliographystyle{amsplain}

\end{document}